\theoremstyle{plain}
\newtheorem{thm}{Theorem}[section]
\newtheorem{prop}[thm]{Proposition}
\newtheorem{lem}[thm]{Lemma}
\newtheorem{cor}[thm]{Corollary}
\newtheorem{claim}{Claim}
\theoremstyle{definition}
\newtheorem{defn}{Definition}
\theoremstyle{remark}
\newtheorem{remark}{Remark}
\newtheorem{notation}{Notation}
  \def\C{{\mathbb{C}}}           \def\N{{\mathbb{N}}}   \def\Q{{\mathbb{Q}}}         \def\Z{{\mathbb{Z}}}
\def\bfa{{\bf{a}}} \def\bfb{{\bf{b}}} \def\bfc{{\bf{c}}}      \def\bfi{{\bf{i}}}                 
 \def\cB{{\mathcal{B}}}           \def\cM{{\mathcal{M}}}   \def\cP{{\mathcal{P}}} \def\cQ{{\mathcal{Q}}} \def\cR{{\mathcal{R}}}   \def\cU{{\mathcal{U}}}     
               \def\sP{{\mathscr{P}}}          
\newcommand{\Ga}{\Gamma}
\newcommand{\ga}{\gamma}
\newcommand{\eps}{\epsilon}
\newcommand\Ca{{\bf \operatorname{{\bf Cantor}}}}
\newcommand\Closed{\operatorname{Closed}}
\newcommand\Factor{{\operatorname{Factor}}}
\newcommand\IRS{\operatorname{IRS}}
\newcommand\Prob{\operatorname{Prob}}
\newcommand\Stab{\operatorname{Stab}}
\newcommand\Sub{\operatorname{Sub}}
\def\cc{{\curvearrowright}}
\def\mc{\mathcal}
\newcommand{\Es}[1]{\EuScript{#1}}
\newcommand{\Ra}{\Rightarrow}
\newcommand{\ra}{\rightarrow}
\newcommand{\csuchthat}{\, :\,}
\begin{document}
\title{The space of stable weak equivalence classes of measure-preserving actions}
\author{Lewis Bowen\footnote{supported in part by NSF grant DMS-1500389, NSF CAREER Award DMS-0954606} ~and Robin Tucker-Drob\footnote{supported in part by NSF grant DMS-1600904}}
%\author{University of Hawaii}
%\author[Lewis Bowen]{Lewis Bowen$\dagger$}
%\address{Department of Mathematics\\
%University of Hawai'i--Manoa\\
%} %one \address command per author
%\email{lpbowen@math.hawaii.edu}
%%\thanks{$\dagger$ Supported in part by NSF grants DMS-??.}
\maketitle

\begin{abstract}
The concept of (stable) weak containment for measure-preserving actions of a countable group $\Ga$ is analogous to the classical notion of (stable) weak containment of unitary representations. If $\Ga$ is amenable then the Rokhlin lemma shows that all essentially free actions are weakly equivalent. However if $\Ga$ is non-amenable then there can be many different weak and stable weak equivalence classes. Our main result is that the set of stable weak equivalence classes naturally admits the structure of a Choquet simplex. For example, when $\Ga=\Z$ this simplex has only a countable set of extreme points but when $\Ga$ is a nonamenable free group, this simplex is the Poulsen simplex. We also show that when $\Ga$ contains a nonabelian free group, this simplex has uncountably many strongly ergodic essentially free extreme points.
\end{abstract}

\noindent
{\bf Keywords}: weak containment, pmp actions\\
{\bf MSC}:37A35\\

\noindent
\tableofcontents

\section{Introduction}

A. Kechris introduced the notion of weak containment for group actions as an analogue of weak containment for unitary representations \cite[II.10 (C)]{Kechris-global-aspects}. Given a countable group $\Ga$ and probability measure-preserving (pmp) actions $\bfa:=\Ga \cc^a (X,\mu), \bfb:=\Ga \cc^b (Y,\nu)$ on standard probability spaces, we say $\bfa$ is {\bf weakly contained} in $\bfb$ (denoted $\bfa \prec \bfb$) if for every finite measurable partition $\{P_i\}_{i=1}^n$ of $X$, finite $S \subseteq \Ga$ and $\epsilon>0$ there exists a measurable partition $\{Q_i\}_{i=1}^n$ of $Y$ satisfying
$$| \mu(\ga^a P_i \cap P_j) - \nu(\ga^b Q_i \cap Q_j) | < \eps$$
for all $\ga \in S$ and $1\le i,j \le n$ (where the action of $\Ga\cc^a X$ is denoted $\ga^a x$ for $\ga \in \Ga, x\in X$ for example).  We say $\bfa$ is {\bf weakly equivalent} to $\bfb$, denoted $\bfa \sim \bfb$, if both $\bfa \prec \bfb$ and $\bfb \prec \bfa$.

The Rokhlin Lemma is essentially equivalent to the statement that for the group $\Ga=\Z$ all essentially free\footnote{An action is essentially free if almost every point has trivial stabilizer.} pmp actions are weakly equivalent. Indeed, as remarked in \cite{kechris-2012}, this statement holds for all countable amenable groups. However it fails for nonamenable groups because strong ergodicity is an invariant of weak equivalence \cite[Prop. 10.6]{Kechris-global-aspects}. This motivates the problem of providing a description of the set of all weak equivalence classes, denoted by $\Es{W}_\Ga$,  for a given group $\Ga$. %We will see that there is a related notion, called stable weak equivalence, and that its equivalence classes naturally form a Choquet simplex.

We start with an equivalent definition of weak containment. Let $\Ca$ denote any space homeomorphic to a Cantor set. Let $\Ga$ act on $\Ca^\Ga$ by $(\ga x)(f)=x(\ga^{-1} f)$. Let $\Prob_\Ga(\Ca^\Ga)$ denote the space of all $\Ga$-invariant Borel probability measures on $\Ca^\Ga$ equipped with the weak* topology. It is well-known that $\Prob_\Ga(\Ca^\Ga)$ is a {\bf Choquet simplex}: this means it is a compact convex subset of a locally convex topological vector space with the property that every element $\mu \in \Prob_\Ga(\Ca^\Ga)$ can be uniquely written as a convex integral of extreme points of $\Prob_\Ga(\Ca^\Ga)$.

Given an action $\bfa:=\Ga \cc^a (X,\mu)$, let $\Factor(\bfa) \subseteq \Prob_\Ga(\Ca^\Ga)$ denote the set of measures of the form $\Phi_*\mu$ where $\Phi:X \to \Ca^\Ga$ is a $\Ga$-equivariant measurable map and $\Phi_*\mu = \mu \circ \Phi^{-1}$. The weak* closure of $\Factor(\bfa)$ is denoted $W(\bfa)$. It follows from  \cite{abert-weiss-2013} that $\bfa \prec \bfb$ if and only if $W(\bfa) \subseteq W(\bfb)$ (see also \cite[Prop. 3.6]{T-D12}). So the map $\bfa \mapsto W(\bfa)$ induces an injective map from the set of weak equivalence classes into the set of closed subsets of $\Prob_\Ga(\Ca^\Ga)$. We equip the latter with the Vietoris topology, and $\Es{W}_\Ga$ with the subspace topology. This topology, considered in \cite{T-D12}, is a reformulation of a construction due to Abert-Elek. The main result of \cite{AE11} is that $\Es{W}_\Ga$ is compact (an alternative proof is given in \cite{T-D12}).

This motivates the question: what sort of subsets of $\Prob_\Ga(\Ca^\Ga)$ can have the form $W(\bfa)$? This is addressed in \cite{abert-weiss-2013}: if $\bfa$ is strongly ergodic then $W(\bfa)$ is contained in the set of extreme points of $\Prob_\Ga(\Ca^\Ga)$. If $\bfa$ is ergodic but not strongly ergodic then $W(\bfa)$ is a subsimplex of $\Prob_\Ga(\Ca^\Ga)$: that is, it is the convex hull of the extreme points of $\Prob_\Ga(\Ca^\Ga)$ contained in $W(\bfa)$. See Theorem \ref{thm:stable} below.

We now turn towards a description of stable weak equivalence classes where we obtain a more complete picture. We say that $\bfa$ is {\bf stably weakly contained} in $\bfb$, denoted $\bfa \prec_s \bfb$, if $\bfa \times \bfi \prec \bfb \times \bfi$ where $\bfi$ denotes the trivial action of $\Ga$ on the unit interval equipped with Lebesgue measure. If both $\bfa \prec_s \bfb$ and $\bfb \prec_s \bfa$ then we say the two actions are {\bf stably weakly equivalent} and denote this by $\bfa \sim_s \bfb$.  Let $SW(\bfa):=W(\bfa \times \bfi)$; by \cite[Theorem 1.1]{T-D12} $SW(\bfa )$ is the closed convex hull of $W(\bfa )$ (see Lemma \ref{lem:stable1}). Then $\bfa \prec_s \bfb$ if and only of $SW(\bfa) \subseteq SW(\bfb)$. So $\bfa \mapsto SW(\bfa)$ induces an injective map from the set of stable weak equivalence classes into the set of closed convex subsets of $\Prob_\Ga(\Ca^\Ga)$. We denote the set of stable weak equivalence classes with the induced topology by $\Es{SW}_\Ga$. Like the weak equivalence case, $\Es{SW}_\Ga$ is compact\footnote{This can be proven in a manner similar to the case of $\Es{W}_\Ga$. Alternatively, by Lemma \ref{lem:stable1} one can view $\Es{SW}_\Ga$ as the subspace of convex elements of $\Es{W}_\Ga$. Because convexity is a closed property, $\Es{SW}_\Ga$ is closed in $\Es{W}_\Ga$ and therefore is compact.}. By Theorem \ref{thm:stable}, if $\bfa$ is ergodic then $SW(\bfa)$ is a subsimplex of $\Prob_\Ga(\Ca^\Ga)$.

To simplify notation, let $\sP:=\Prob_\Ga(\Ca^\Ga)$ and $\Closed(\sP)$ denote the space of all closed subsets of $\sP$ equipped with the Vietoris topology, and let $\mathrm{CloCon}(\sP)$ denote the collection of all closed convex subsets of $\sP$. The space $\mathrm{CloCon}(\sP)$ is compact, and it admits a natural convex structure: if $F_1,F_2 \in \mathrm{CloCon}(\sP)$ and $t\in [0,1]$ then $tF_1+(1-t)F_2 \in \mathrm{CloCon}(\sP)$ is defined to be the set of all measures of the form $t\mu_1 + (1-t)\mu_2$ with $\mu_i \in F_i$ ($i=1,2$). The space $\Es{SW}_{\Ga}$ is then a closed convex subset of $\mathrm{CloCon}(\sP )$. Our main result is that $\Es{SW}_\Ga$ is a Choquet simplex (Theorem \ref{thm:simplex}). This means that for every $\alpha \in \Es{SW}_\Ga$ there exists a {\em unique} probability measure on the set of extreme points of $\Es{SW}_\Ga$ such that $\alpha$ is the barycenter of this measure.

Can we identify the simplex $\Es{SW}_\Ga$ up to affine homeomorphism? To begin answering this question we need the following concept. An {\bf invariant random subgroup} is a random subgroup of $\Ga$ whose law is invariant under conjugation. Let $\IRS(\Ga)$ denote the space of all conjugation-invariant Borel probability measures on the space of subgroups of $\Ga$. To any pmp action $\bfa=\Ga \cc^a (X,\mu)$ we associate the element $\IRS(\bfa)$ defined by
$$\IRS(\bfa)  : = \Stab_*\mu$$
where $\Stab:X \to \Sub(\Ga)$ is the map $\Stab(x)=\{g\in \Ga:~g^ax=x\}$ and $\Sub(\Ga)$ is the space of subgroups of $\Ga$ with the pointwise convergence topology. By \cite{AE11} and \cite{T-D12}, if $\bfa \sim_s \bfb$ then $\IRS(\bfa) = \IRS(\bfb)$. So we have a well-defined map $\IRS: \Es{SW}_\Ga \to \IRS(\Ga)$. In \cite[Theorem 5.2]{T-D12} and \cite[Corollary 5.1]{burton-weak-2015} it is shown that this map is affine and continuous. It is also surjective by \cite[Proposition 45]{abert-IRS-2014}.  In \cite{T-D12} (see the remark after \cite[Theorem 1.8]{T-D12}), it is shown that when $\Ga$ is amenable, $\IRS$ is a homeomorphism. So we have a complete description of $\Es{SW}_\Ga$ in the case where $\Ga$ is amenable.

When $\Ga$ is nonamenable however, there can be many stable weak equivalence classes which map to a given IRS of $\Ga$. If $\Ga$ is a nonamenable free group, then P. Burton showed that the subsimplex of $\Es{SW}_\Ga$ consisting of all stable weak equivalence class of free actions, is a Poulsen simplex \cite{burton-weak-2015}. This means that its extreme points are dense. There is a unique Poulsen simplex up to affine homeomorphism \cite{lindenstrauss1978poulsen}. If $\Ga$ has property (T), then Theorem \ref{thm:T} below shows that $\Es{SW}_\Ga$ is a Bauer simplex which means that the extreme points form a closed subset of $\Es{SW}_\Ga$. In particular, $\Es{SW}_\Ga$ cannot be a Poulsen simplex.

In case $\Ga$ has a nonamenable free subgroup, Theorem \ref{thm:uncountable} below shows that $\Es{SW}_\Ga$ has an uncountable set $\{S_p\}_{p\ge 2}$ of extreme points indexed by the interval $[2,\infty)$. Moreover, each $S_p$ is the class of a free, mixing, strongly ergodic action. The proof uses Okayasu's result that the universal $\ell^p(\Ga)$-representations of the free group are pairwise weakly inequivalent \cite{Oka14}.
%If we have more results, put them in here.

\subsection{Related literature}

Burton and Kechris have written a very recent survey article on weak containment \cite{burton-kechris}.

For every countable group $\Ga$ there exists a pmp action $\bfa$ such that {\em all} pmp actions of $\Ga$ are weakly contained in $\bfa$. This is known as the {\bf weak Rokhlin property} \cite{MR2285247}. This property was introduced by Glasner-King where it was shown to imply a correspondence between generic properties of pmp actions and invariant measures \cite{GK98}.

Moreover, every essentially free action weakly contains every Bernoulli action \cite{abert-weiss-2013}. This latter fact has been used to show that the cost of essentially free actions of $\Ga$ is maximized by the Bernoulli actions. Moreover, certain combinatorial quantities such as independence number of actions are weak equivalence invariants which allows one to use compactness to prove that their extreme values are realized \cite{CKT13}. This paper also establishes equivalent definitions of weak containment in terms of the space of all actions and ultraproducts of actions.

A residually finite group $\Ga$ has {\bf property MD} if every action is stably weakly contained in a profinite action of $\Ga$. It is known that residually finite amenable groups, free groups, and fundamental groups of closed hyperbolic 3-manifolds\footnote{In \cite{bowen-tucker-2013} it was shown that fundamental groups of virtually fibered hyperbolic 3-manifolds have property MD. By \cite{agol-virtual-haken} all closed hyperbolic 3-manifolds are virtually fibered.} have property MD \cite{bowen-tucker-2013}. This property is a strengthening of Lubotsky-Shalom's {\bf property FD} which is defined similarly but for unitary representations instead of pmp actions \cite{lubotzky-shalom-2004}. It is unknown whether the direct product of two free groups has MD or FD.

The main result of \cite{AE12} is that, for strongly ergodic actions, weak containment of a given finite action implies actual containment of the same action. They apply this to show that certain groups such as free groups and linear property (T) groups, admit an uncountable family of non-weakly-equivalent essentially free ergodic actions \cite{AE12}. Ioana and Tucker-Drob strengthened the main result of \cite{AE12} by generalizing finite actions to distal actions.  Consequently, the weak equivalence class of a strongly ergodic action remembers the weak isomorphism class of its maximal distal factor \cite{MR3545932}.

Aaserud and Popa introduced several variants of weak containment in the context of orbit-equivalence  \cite{aaserud-popa}. Ab\'{e}rt and Elek show in \cite{AE11} that the invariant random subgroup (IRS) of an action is a weak equivalence invariant. Tucker-Drob showed in \cite{T-D12} that actions within a given weak equivalence class are unclassifiable up to countable structures.

Peter Burton showed in \cite{burton-weak-2015} that the space of stable weak equivalence classes naturally forms a convex compact subset of a Banach space and, when $\Ga$ is amenable, identifies this simplex as the simplex of IRS's. The proofs used some ideas from an earlier draft of this paper.

{\bf Acknowledgements}. After obtaining the proof that the space of stable weak equivalence classes forms a simplex, we naturally wondered what simplex could it be. It seemed natural to guess that for the free group, one obtains a Poulsen simplex. Peter Burton's beautiful proof of this result inspired us to finish this work \cite{burton-weak-2015}. So thanks, Peter. We would also like to thank Matthew Wiersma for pointing us to Okayasu's paper \cite{Oka14}.

\section{Preliminaries}

\subsection{Glossary}

\begin{itemize}
\item An action $\Ga \cc (X,\mu)$ is {\bf pmp} if $\mu$ is a probability measure and the action is measure-preserving.
\item An action $\Ga \cc (X,\mu)$ is {\bf essentially free} if for a.e. $x\in X$, the stabilizer of $x$ in $\Ga$ is trivial.
\end{itemize}

\subsection{Notation}

Throughout this paper, $\Ca$ denotes the Cantor set, $\Ga$ a countable group, $\sP:=\Prob_\Ga(\Ca^\Ga)$ the space of invariant Borel probability measures on $\Ca^\Ga$ equipped with the weak* topology, $\sP^{erg}\subseteq \sP$ the subspace of ergodic invariant measures, $\Closed(\sP)$ the space of closed subsets of $\sP$ with the Vietoris topology, and $\mathrm{CloCon}(\sP)$ the space of closed convex subsets of $\sP$. Moreover, if $\bfa = \Ga \cc ^a (X,\mu)$ is a pmp action then $\Factor(\bfa) \subseteq \sP$ is the set of all measures of the form $\Phi_*\mu$ where $\Phi:X \to \Ca^\Ga$ is measurable and $\Ga$-equivariant. Also $W(\bfa)$ is the weak* closure of $\Factor(\bfa)$ and $SW(\bfa) = W(\bfa \times \bfi)$ where $\bfi$ denotes the trivial action of $\Ga$ on the unit interval with respect to Lebesgue measure. We let $\Es{W}_\Ga \subseteq \Closed(\sP)$ denote the collection of all closed subsets of the form $W(\bfa)$ and $\Es{SW}_\Ga \subseteq \Closed(\sP)$ denotes the collection of all closed subsets of the form $SW(\bfa)$ over all pmp actions $\bfa$ of $\Ga$. Note that $\Es{SW}_{\Ga}\subseteq \mathrm{CloCon}(\sP )$ by \cite[Theorem 1.1]{T-D12}.

If $\bfa = \Ga \cc ^a (X,\mu)$ then the action of $\Ga$ on $X$ is denoted $g^ax$ for $g\in \Ga, x\in X$. For $t>0$ we define the action $t \bfa$ by $t\bfa = \Ga \cc ^a (X,t\mu)$. In other words, it is the same action, we simply scale the measure by $t$. If $\bfb=\Ga \cc^b (Y,\nu)$ is another action then we define $\bfa \oplus \bfb$ to be the action $\bfa \oplus \bfb = \Ga \cc ^{a \oplus b} (X \sqcup Y,\mu \oplus \nu)$ where $X \sqcup Y$ denotes the disjoint union of $X$ and $Y$, $\mu \oplus \nu(E) = \mu(E \cap X) + \nu(E \cap Y)$ for $E \subseteq X \sqcup Y$ and $g^{a\oplus b}x = g^ax, g^{a\oplus b}y = g^by$ for $x \in X$, $y\in Y$ and $g\in \Ga$.

\section{Strong ergodicity}

\begin{defn}
Let $\bfa = \Ga \cc ^a (X,\mu)$. We say that a sequence $\{B_i\}_{i=1}^\infty$ of measurable sets in $X$ is {\bf asymptotically invariant} (with respect to $\bfa$) if for every $g \in \Ga$,
$$\lim_{i\to\infty} \mu(B_i \vartriangle g^a B_i) = 0.$$
We say that $\{B_i\}_{i=1}^\infty$ is {\bf nontrivial} if $\limsup_{i\to\infty} \mu(B_i)(1-\mu(B_i)) >0$. The action $\bfa$ is {\bf strongly ergodic} if it does not admit any nontrivial asymptotically invariant sequences. Equivalently, $\bfa$ is strongly ergodic if $\bfb\prec\bfa$ implies $\bfb$ is ergodic (see \cite[Prop. 5.6]{CKT13}).
\end{defn}

\begin{defn}
If $\bfa$ and $\bfb$ are pmp actions of $\Ga$ and $t\in [0,1]$ then we write $t\bfb\prec \bfa$ to mean that $t\bfb\oplus (1-t)\bfi_0\prec \bfa$ where $\bfi_0$ is the trivial action of $\Ga$ on a one point probability space. Since any pmp action trivially contains $\bfi_0$, if $\bfc$ is any pmp action and $s\bfb\oplus (1-s)\bfc \prec \bfa$ for some $0<s\leq 1$, then $s\bfb \prec \bfa$.

More generally, if $\bfa, \bfb$ are any finite-measure-preserving actions then $\bfb \prec \bfa$ means that $t\bfb \prec t \bfa$ where $t>0$ is chosen so that $t\bfa$ is probability-measure-preserving.
\end{defn}

The main result of this section is:
\begin{thm}\label{thm:not-strongly-ergodic}
Let $\bfa$ be an ergodic but not strongly ergodic pmp action of $\Ga$. Then for every $0<t<1$, $t\bfa \prec \bfa$.
\end{thm}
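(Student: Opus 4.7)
The plan is to use failure of strong ergodicity to produce an asymptotically invariant set $B\subseteq X$ of measure $\approx 1-t$, then to witness $t\bfa\oplus (1-t)\bfi_0\prec \bfa$ by using $B$ in place of the trivial point of $(1-t)\bfi_0$ and $X\setminus B$ in place of a scaled copy of $\bfa$ itself. Because $\bfa$ is ergodic but not strongly ergodic, the Connes--Weiss theorem (a level-set construction applied to real almost invariant unit vectors in $L^2_0(X,\mu)$) produces, for every $s\in (0,1)$, every $\vre >0$, and every finite $S\subseteq \Ga$, a measurable set $B\subseteq X$ with $|\mu(B)-s|<\vre$ and $\mu(gB\triangle B)<\vre$ for all $g\in S$. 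Applying this with $s=1-t$ gives an asymptotically invariant sequence $(B_i)$ with $\mu(B_i)\to 1-t$.

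Next, ergodicity forces asymptotic independence of $B_i$ from any fixed event. Indeed $(1_{B_i})$ is bounded in $L^\infty$ and asymptotically $\Ga$-invariant, so every weak-$*$ accumulation point is $\Ga$-invariant, hence constant by ergodicity, and the constant must equal $\lim_i\mu(B_i)=1-t$. Thus $1_{B_i}\to 1-t$ weak-$*$, yielding
\[\mu(B_i\cap E)\longrightarrow (1-t)\,\mu(E)\qquad\text{for every measurable }E\subseteq X.\]

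To verify $t\bfa\oplus (1-t)\bfi_0\prec \bfa$, fix a finite partition $P_1,\dots ,P_n$ of $X\sqcup \{*\}$ (say $*\in P_n$), a finite $S\subseteq \Ga$, and $\vre >0$. Writing $A_j:=P_j\cap X$, pick $i$ large enough that $B:=B_i$ satisfies both $\mu(gB\triangle B)<\vre$ for every $g\in S$ and $|\mu(B\cap (gA_j\cap A_k))-(1-t)\mu(gA_j\cap A_k)|<\vre$ for all $g\in S$ and $1\le j,k\le n$. Set $Q_j:=A_j\setminus B$ for $j<n$ and $Q_n:=(A_n\setminus B)\cup B$. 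Decomposing $gQ_j\cap Q_k$ into the four pieces obtained by intersecting with $B$ or $B^c$ on each side, and using $\mu(gB\triangle B)<\vre$ together with the approximate independence above, yields
\[\mu(gQ_j\cap Q_k)=t\,\mu(gA_j\cap A_k)+(1-t)\,[j=k=n]+O(\vre),\]
which is exactly the joint distribution of $(P_1,\dots ,P_n)$ under $t\bfa\oplus (1-t)\bfi_0$, up to $O(\vre)$.

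The main obstacle is the first step: producing asymptotically invariant sets of \emph{prescribed} density from the bare hypothesis that $\bfa$ is not strongly ergodic. This rests on the Connes--Weiss level-set argument, where one must choose the threshold carefully so that the level sets both have the desired measure and remain asymptotically invariant. Given this ingredient, the remainder is standard ergodicity combined with inclusion-exclusion bookkeeping.
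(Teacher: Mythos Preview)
Your argument is correct and runs parallel to the paper's: obtain an asymptotically invariant set of the right density, use asymptotic independence (your weak-$*$ argument is exactly the Jones--Schmidt mixing lemma the paper quotes), and build the witnessing partition by intersecting the given partition with $B$ and $B^c$. The partition bookkeeping matches the paper's up to swapping the roles of $B$ and its complement and allowing $*$ to sit in an arbitrary $P_n$ rather than in its own cell.

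The one substantive difference is how to obtain asymptotically invariant sets of \emph{prescribed} density, which you rightly flag as the main obstacle. Invoking Connes--Weiss here needs care: the bare level-set construction from a real almost-invariant unit vector $f\in L^2_0$ need not hit every density directly (for instance, if $f=1_A-1_{A^c}$ is two-valued then the only nontrivial level set has measure $1/2$), so ``choosing the threshold carefully'' is not by itself enough. The paper sidesteps this with a Boolean bootstrap that uses precisely your asymptotic-independence lemma: if $(B_i)$ and $(C_j)$ are asymptotically invariant with $\mu(B_i)\to s$ and $\mu(C_j)\to t$, then along a suitable diagonal $\mu(B_i\cap C_j)\to st$ and $\mu(B_i\cup C_j)\to s+t-st$, so the set $N\subseteq (0,1)$ of achievable limit densities is nonempty, closed, and stable under $t\mapsto 1-t$, $(s,t)\mapsto st$, and $(s,t)\mapsto s+t-st$, hence $N=(0,1)$. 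This is the cleanest way to close the gap you identified, and it needs nothing beyond what you have already proved.
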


The next result was obtained in \cite[Proof of Lemma 2.3]{jones-schmidt-1987}.
\begin{lem}[Asymptotically invariant sets are mixing]\label{lem:not-strongly-ergodic}
Let $\bfa = \Ga \cc ^a (X,\mu)$ be ergodic and let $\{B_i\}_{i=1}^\infty \subseteq X$ be an asymptotically invariant sequence with respect to $\bfa$ such that
$$\lim_{i\to\infty} \mu(B_i) = t ~\textrm{ for some } ~0<t<1.$$
If $A_1,A_2$ are any measurable subsets of $X$ then for every $g\in \Ga$,
$$\lim_{i\to\infty} | \mu(B_i \cap A_1 \cap g^a A_2) - \mu(B_i) \mu(A_1 \cap g^a A_2)| = 0.$$

\end{lem}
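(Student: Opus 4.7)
The plan is to pass to the Koopman picture in $L^2(X,\mu)$ and show that the recentered indicator functions $\mathbf{1}_{B_i} - \mu(B_i)$ converge \emph{weakly} to $0$. Once this is established, the lemma reduces to pairing this weak-null sequence against a fixed $L^2$ vector, namely $\mathbf{1}_{A_1 \cap g^a A_2}$.

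First I would set $f_i := \mathbf{1}_{B_i} - \mu(B_i)$ and record two basic facts: $\|f_i\|_2 \le 1$ and $\int f_i \, d\mu = 0$. Writing $\pi$ for the Koopman representation, $(\pi(g)h)(x) := h(g^{-a}x)$, the asymptotic invariance hypothesis translates directly into
$$\|f_i - \pi(g) f_i\|_2^2 \;=\; \|\mathbf{1}_{B_i} - \mathbf{1}_{g^a B_i}\|_2^2 \;=\; \mu(B_i \vartriangle g^a B_i) \;\longrightarrow\; 0$$
for every fixed $g \in \Ga$, since the constant $\mu(B_i)$ is annihilated by $I - \pi(g)$.

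Next I would show $f_i \to 0$ weakly in $L^2(X,\mu)$. By Banach--Alaoglu, any subsequence has a further weak limit $f \in L^2$. For every $g \in \Ga$ and every $h \in L^2$, the Cauchy--Schwarz estimate $|\langle f_i - \pi(g)f_i, h\rangle| \le \|f_i - \pi(g)f_i\|_2 \|h\|_2$ together with the previous step gives $\langle f - \pi(g)f, h\rangle = 0$, so $f$ is $\Ga$-invariant. By ergodicity $f$ is a.e.\ constant, and the constant equals $\lim_i \int f_i\, d\mu = 0$. Thus every weak subsequential limit is $0$, which means the whole sequence converges weakly to $0$.

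Finally I would unwind the definitions: the quantity in question is
$$\mu(B_i \cap A_1 \cap g^a A_2) - \mu(B_i)\,\mu(A_1 \cap g^a A_2) \;=\; \langle f_i,\; \mathbf{1}_{A_1 \cap g^a A_2}\rangle,$$
which tends to $0$ by weak convergence. The only step with any content is the ergodicity argument identifying every weak subsequential limit of $f_i$ as the zero function; the rest is bookkeeping. I do not foresee any serious obstacle, since the only tools needed are the Koopman representation, weak sequential compactness of the unit ball in $L^2$, and the standard ``invariant means constant'' characterization of ergodicity.
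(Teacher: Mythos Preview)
Your argument is correct, and it is the standard one. The paper does not give its own proof of this lemma; it simply cites the proof of Lemma~2.3 in Jones--Schmidt, where the argument is exactly what you outline: the recentered indicators $f_i=\mathbf{1}_{B_i}-\mu(B_i)$ are norm-bounded in $L^2_0(X,\mu)$ and satisfy $\|f_i-\pi(g)f_i\|_2\to 0$, so every weak subsequential limit is $\Ga$-invariant, hence zero by ergodicity, and the weak convergence $f_i\to 0$ is precisely the claimed estimate.
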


\begin{cor}
If $\bfa=\Ga \cc^a (X,\mu)$ is an ergodic but not strongly ergodic pmp action of $\Ga$ then for every $t \in (0,1)$ there exists an asymptotically invariant sequence $\{B_i\}$ such that $\lim_{i\to\infty} \mu(B_i) = t$.
\end{cor}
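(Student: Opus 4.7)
The plan is to start from a nontrivial asymptotically invariant (a.i.) sequence, which exists because $\bfa$ is not strongly ergodic, and use Lemma \ref{lem:not-strongly-ergodic} to extract ``asymptotically independent'' subcollections of its terms, then take a Boolean combination of those whose measure is forced to be close to $t$. By passing to a subsequence of the given a.i.\ sequence $\{A_i\}$, I may assume $\mu(A_i)\to s$ for some $s\in(0,1)$ (since $\limsup \mu(A_i)(1-\mu(A_i))>0$ keeps $\mu(A_i)$ bounded away from both endpoints).

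The main step, and the main obstacle, is an inductive independence construction: for every $k\ge 1$ and every $\delta>0$, find indices $i_1<i_2<\cdots<i_k$ such that for each sign pattern $\sigma\in\{0,1\}^k$,
\[
\Bigl|\mu\Bigl(\bigcap_{j=1}^{k} A_{i_j}^{\sigma_j}\Bigr)-s^{|\sigma|}(1-s)^{k-|\sigma|}\Bigr|<\delta,
\]
where $A^1=A$, $A^0=X\setminus A$ and $|\sigma|=\sum_j \sigma_j$. The inductive step freezes the $2^{k-1}$ Boolean cells $C_{\sigma'}=\bigcap_{j<k}A_{i_j}^{\sigma'_j}$ and applies Lemma \ref{lem:not-strongly-ergodic} (with $A_1=C_{\sigma'}$, $A_2=X$, $g=e$) to conclude $\mu(A_i\cap C_{\sigma'})-\mu(A_i)\mu(C_{\sigma'})\to 0$; choosing $i_k$ large enough enforces approximate independence of $A_{i_k}$ from all the previously fixed $C_{\sigma'}$ simultaneously. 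What makes this delicate is that the mixing lemma only delivers independence against \emph{fixed} reference sets, which forces the inductive rather than one-shot nature of the argument.

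The second step is a routine combinatorial lemma. The cell masses $c_\sigma:=s^{|\sigma|}(1-s)^{k-|\sigma|}$ are positive, sum to $1$, and satisfy $c_\sigma\le\max(s,1-s)^k\to 0$ since $s\in(0,1)$; hence their ordered partial sums tile $[0,1]$ with mesh at most $\max(s,1-s)^k$. Therefore, for every $t\in(0,1)$ and every $\eta>0$ there exist $k=k(\eta)$ and $S\subseteq\{0,1\}^k$ with $\bigl|\sum_{\sigma\in S}c_\sigma-t\bigr|<\eta$.

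Finally I diagonalize. Fix an enumeration $g_1,g_2,\ldots$ of $\Ga$. For each $n\ge 1$, use the combinatorial step to choose $m_n$ and $S_n\subseteq\{0,1\}^{m_n}$ with approximation error $<1/n$, and then use the inductive step to choose indices $i_1(n)<\cdots<i_{m_n}(n)$ so large that (i) every Boolean cell $\bigcap_j A_{i_j(n)}^{\sigma_j}$ has measure within $1/(n\,2^{m_n})$ of its target $c_\sigma$, and (ii) $\mu(A_{i_j(n)}\triangle g_\ell^a A_{i_j(n)})<1/(n\,m_n)$ for every $j\le m_n$ and every $\ell\le n$. Put $B_n:=\bigcup_{\sigma\in S_n}\bigcap_{j=1}^{m_n}A_{i_j(n)}^{\sigma_j}$. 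Summing the cell estimates gives $\mu(B_n)\to t$, and since $B_n$ is a Boolean combination of $m_n$ sets each of which is $(1/(n m_n))$-almost-invariant under any of the first $n$ generators, $\mu(B_n\triangle g^a B_n)\to 0$ for every fixed $g\in\Ga$. Thus $\{B_n\}$ is the required a.i.\ sequence with $\mu(B_n)\to t$.
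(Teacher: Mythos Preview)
Your argument is correct. It differs from the paper's proof, which is shorter and more conceptual: the paper lets $N\subseteq(0,1)$ be the set of all values $t$ arising as $\lim_i\mu(B_i)$ for some asymptotically invariant sequence, observes that $N$ is closed in $(0,1)$ (by diagonalization) and, using Lemma~\ref{lem:not-strongly-ergodic}, that $N$ is stable under $t\mapsto 1-t$, $(s,t)\mapsto st$, and $(s,t)\mapsto s+t-st$; since $N\neq\emptyset$ these closure properties force $N=(0,1)$. Your approach unrolls the same mechanism explicitly: from a single a.i.\ sequence with limit $s$ you build approximately independent $k$-tuples and then select Boolean cells to hit any target within $\max(s,1-s)^k$. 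The paper's route is slicker and avoids the cell bookkeeping; yours has the small advantage of working from one a.i.\ sequence and making the approximation rate $\max(s,1-s)^k$ visible. The key input, Lemma~\ref{lem:not-strongly-ergodic}, is the same in both. One remark on your final invariance estimate: the right way to bound $\mu(B_n\triangle g^aB_n)$ is exactly as you indicate, via the $m_n$ generating sets $A_{i_j(n)}$ (since $B_n\triangle g^aB_n\subseteq\bigcup_{j\le m_n}(A_{i_j(n)}\triangle g^aA_{i_j(n)})$), not via the up to $2^{m_n}$ cells; your choice of tolerance $1/(n\,m_n)$ reflects this and gives the needed $1/n$ bound.
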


\begin{proof}
Let $N \subseteq (0,1)$ be the set of all numbers $t \in (0,1)$ such that there exists an asymptotically invariant sequence $\{B_i\}$ such that $\lim_{i\to\infty} \mu(B_i) = t$. Suppose that $\{B_i\}$ and $\{C_j\}$ are asymptotically invariant sequences. Then $\{X \setminus B_i\}, \{B_i \cap C_i\}$ and $\{B_i \cup C_i\}$ are asymptotically invariant. From the previous lemma it follows that  $\{1-t,st,s+t-st:~s,t\in N\} \subseteq N$. Since $N$ is closed and nonempty, it follows that $N = (0,1)$ as claimed.
\end{proof}

\begin{proof}[Proof of Theorem \ref{thm:not-strongly-ergodic}]
Let $\bfa=\Ga \cc^a (X,\mu)$, $\cP=\{P_1,\ldots, P_k\}$ be a finite Borel partition of $X$ and $0<t<1$. By the previous corollary there exists an asymptotically invariant sequence $\{B_n\}$ with $\lim_{n\to\infty} \mu(B_n) = t$. By Lemma \ref{lem:not-strongly-ergodic},
$$\lim_{n\to\infty} |  \mu(B_n \cap P_i\cap g^a P_j) - t\mu(P_i \cap g^a P_j)| = 0$$
for all $P_i,P_j\in \cP$ and $g\in \Ga$. Set $Q^{(n)}_i = B_n \cap P_i$. The asymptotic invariance of $\{B_n\}$ and the previous limit implies
$$\lim_{n\to\infty} |  \mu(Q^{(n)}_i \cap g^a Q^{(n)}_j) - t\mu(P_i \cap g^a P_j)| = 0$$
for any $i,j$ and $g\in \Ga$. This implies the theorem.
\end{proof}

\section{Ergodic decomposition}
%\subsection{Another proof of Corollary \ref{cor:ergcomp}}

The main purpose of this section is to prove:
\begin{thm}\label{thm:weak-decomposition}
Let $\bfa=\Ga \cc ^a (X,\mu )$, $\bfb = \Ga \cc ^b (Y,\nu )$, and $\bfc= \Ga\cc ^c (Y',\nu ' )$ be pmp actions of $\Ga$. Let us assume $\bfa$ is ergodic.
\begin{enumerate}
\item If $\bfa\prec s\bfb\oplus (1-s)\bfc$ for some $0< s\leq 1$ then $\bfa \prec \bfb$. Moreover $\bfa$ is weakly contained in almost every ergodic component of $\bfb$.
\item If $s\bfb\oplus (1-s)\bfc \prec \bfa$ for some $0<s\leq 1$ then $\bfb\prec \bfa$. Moreover, almost every ergodic component of $\bfb$ is weakly contained in $\bfa$.
\item If $s\bfb\oplus (1-s)\bfc \sim \bfa$ for some $0< s\leq 1$ then $\bfb\sim \bfa$. Moreover almost every ergodic component of $\bfb$ is weakly equivalent to $\bfa$.
\end{enumerate}
\end{thm}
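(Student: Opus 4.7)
The plan is to reduce everything to statements about the sets $W(\cdot)$ as subsets of $\sP := \Prob_\Ga(\Ca^\Ga)$. The starting point is the identity
\[ W(s\bfb \oplus (1-s)\bfc) = \{s\mu_1 + (1-s)\mu_2 : \mu_1 \in W(\bfb),\ \mu_2 \in W(\bfc)\}, \]
obtained because every factor map of $s\bfb \oplus (1-s)\bfc$ splits as a pair of factor maps of $\bfb$ and $\bfc$, combined with weak-$*$ compactness of $\sP$ allowing separate extraction of convergent subsequences on each side. The second main ingredient is that when $\bfa$ is ergodic, every $\nu \in \Factor(\bfa)$ is itself ergodic, hence an extreme point of $\sP$.

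For (1), fix $\nu \in \Factor(\bfa)$ and write $\nu \in W(\bfa) \subseteq W(s\bfb \oplus (1-s)\bfc)$ as $\nu = s\mu_1 + (1-s)\mu_2$ with $\mu_1 \in W(\bfb)$ and $\mu_2 \in W(\bfc)$. Extremality of $\nu$ in $\sP$ together with $0 < s \leq 1$ forces $\mu_1 = \nu$, so $\nu \in W(\bfb)$; hence $\Factor(\bfa) \subseteq W(\bfb)$, and taking closures gives $\bfa \prec \bfb$. For the ``moreover'' statement, decompose $\bfb = \int \bfb_y\, d\lambda(y)$ into ergodic components. Given $\nu \in \Factor(\bfa)$, write $\nu = \lim_n \int \tau_y^{(n)}\, d\lambda(y)$ with ergodic $\tau_y^{(n)} \in \Factor(\bfb_y)$, and let $\Lambda_n$ be the pushforward of $\lambda$ under $y \mapsto \tau_y^{(n)}$, viewed as a probability measure on $\sP$. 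The barycenter of $\Lambda_n$ equals $\int \tau_y^{(n)}\, d\lambda(y)$ and converges to $\nu$; by weak-$*$ continuity of the barycenter map, any subsequential limit $\Lambda$ has barycenter $\nu$, and extremality of $\nu$ in $\sP$ forces $\Lambda = \delta_\nu$. So $\tau_y^{(n)} \to \nu$ in $\lambda$-measure, and a further subsequence converges for $\lambda$-a.e.\ $y$. Intersecting the resulting null sets across a countable dense subset of $\Factor(\bfa)$ yields $W(\bfa) \subseteq W(\bfb_y)$ for $\lambda$-a.e.\ $y$.

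For (2), fix any $\Ga$-fixed $c \in \Ca^\Ga$, so $\delta_c \in \Factor(\bfc)$, and for $\mu_1 \in \Factor(\bfb)$ note that $s\mu_1 + (1-s)\delta_c \in W(\bfa)$. The case $s = 1$ is immediate, and the case $0 < s < 1$ with $\bfa$ strongly ergodic is vacuous (since $s\bfb \oplus (1-s)\bfc$ is not even ergodic, whereas strong ergodicity of $\bfa$ forces any action weakly contained in $\bfa$ to be ergodic). Otherwise $\bfa$ is ergodic but not strongly ergodic, and by Theorem \ref{thm:stable} $W(\bfa)$ is a closed simplex whose extreme points are the ergodic measures in $W(\bfa)$; every element of $W(\bfa)$ therefore has its unique ergodic decomposition supported on $W(\bfa) \cap \sP^{erg}$. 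Applied to $s\mu_1 + (1-s)\delta_c \in W(\bfa)$ this forces every ergodic component of $\mu_1$ to lie in $W(\bfa) \cap \sP^{erg}$, and taking barycenter gives $\mu_1 \in W(\bfa)$. Thus $\Factor(\bfb) \subseteq W(\bfa)$ and $\bfb \prec \bfa$. The ``moreover'' part of (2) proceeds analogously, using a countable family of global factor maps of $\bfb$ whose restrictions to $Y_y$ are dense in $\Factor(\bfb_y)$ for $\lambda$-a.e.\ $y$, and running the same ergodic-decomposition argument fiberwise. Part (3) follows from (1) and (2) by intersecting the two almost-everywhere statements.

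I expect the main technical obstacle to be the measurable-selection step used in the ``moreover'' part of (2): one must produce countably many global factor maps of $\bfb$ whose restrictions to $Y_y$ are simultaneously dense in $\Factor(\bfb_y)$ for $\lambda$-a.e.\ $y$. This requires disintegrating the Polish space of equivariant measurable maps $Y \to \Ca^\Ga$ through the ergodic decomposition of $\bfb$. The corresponding step in (1) is much easier because one only needs density in the fixed space $\Factor(\bfa)$.
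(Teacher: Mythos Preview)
Your argument for part (2) is circular. You invoke Theorem \ref{thm:stable} to conclude that when $\bfa$ is ergodic but not strongly ergodic, $W(\bfa)$ is a subsimplex of $\sP$, i.e.\ every element of $W(\bfa)$ has its ergodic decomposition supported on $W(\bfa)\cap\sP^{erg}$. But in the paper's logical order, the proof of Theorem \ref{thm:stable} (specifically the claim that extreme points of $SW(\bfa)$ are ergodic) explicitly uses part (2) of Theorem \ref{thm:weak-decomposition}. Unwinding further, the statement ``the ergodic decomposition of any $\nu\in W(\bfa)$ is supported on $W(\bfa)$'' is exactly the ``moreover'' clause of part (2) applied to $\bfb=\Ga\cc(\Ca^\Ga,\nu)$ with $s=1$. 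So you are assuming what you set out to prove. The same circularity recurs in your sketch of the ``moreover'' clause of (2): you again need to push ergodic components of $\Phi_*\nu$ back into $W(\bfa)$.

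The paper avoids this by a direct bootstrapping argument that does not rely on any simplex structure. From $s\bfb\oplus(1-s)\bfc\prec\bfa$ one immediately has $s\bfb\prec\bfa$. The key step is Lemma \ref{lem:tb}: if $\bfa$ is ergodic and $t\bfb\prec\bfa$ then $t\bfb\oplus(1-t)\bfa\prec\bfa$, proved by exploiting the Jones--Schmidt mixing property of asymptotically invariant sequences (Lemma \ref{lem:JS}). Iterating gives $r_n\bfb\prec\bfa$ with $r_n=\sum_{k=0}^n s(1-s)^k\to 1$, hence $\bfb\prec\bfa$. The ``moreover'' clause is then handled by a separate countable-exhaustion argument on the set of bad ergodic components. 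Your approach to part (1) via extremality of $\Factor(\bfa)$ in $\sP$ is fine and is essentially the content of \cite[Theorem 3.12]{T-D12}, which the paper cites rather than reproves.
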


Part (1) is equivalent to \cite[Theorem 3.12]{T-D12}. Part (3)  follows from parts (1) and (2). So we need only prove part (2). We will need measure algebras as defined next.

\begin{defn}[Measure algebras]
Let $(X,\mu)$ denote a measure-space. Given measurable sets $A,B \subseteq X$ we say that $A$ and $B$ are $\mu$-equivalent if $\mu(A \vartriangle B)=0$. Let $A^{\mu}$ denote the $\mu$-equivalence class of $A$. The {\bf measure-algebra} of $\mu$, denoted $\mbox{MALG}_\mu$, is the set of all classes $A^{\mu}$ where $A \subseteq X$ is a measurable set of finite measure. We usually abuse notation by treating an element of $\mbox{MALG}_\mu$ as if it were a subset of $X$ instead of an equivalence class.
%If $\nu$ is not necessarily a finite measure then we let $\mbox{MALG}_\nu \subseteq \mbox{MALG}_\nu$ denote the subset of all equivalence classes of finite measure subsets.

The set $\mbox{MALG}_\mu$ has a natural metric given by symmetric difference: the distance between $A, B\in \mbox{MALG}_\mu$ is $\mu(A\vartriangle B)$. Note that if $\mu$ is a standard $\sigma$-finite measure then $\mbox{MALG}_\mu$ is separable; it contains a countable dense subset.
\end{defn}

%\begin{defn}
%If $\bfa$ and $\bfb$ are pmp actions of $\Ga$ and $t\in [0,1]$ then we write $t\bfb\prec \bfa$ to mean that $t\bfb\oplus (1-t)\bfi_0\prec \bfa$ where $\bfi_0$ is the trivial action of $\Ga$ on a one point probability space.
%\end{defn}

We need the next few lemmas before proving the second statement of Theorem \ref{thm:weak-decomposition}.

\begin{lem}\label{lem:JS}
Let $\bfa=\Ga \cc ^a (X,\mu )$ and $\bfb= \Ga \cc ^b (Y,\nu )$ be pmp actions of $\Ga$ and let $t\in [0,1]$. Suppose that $\bfa$ is ergodic and that $t\bfb\prec \bfa$. Then given any Borel partition $B_0,\dots , B_{m-1}$ of $Y$, finite subset $F\subseteq \Ga$, $\epsilon >0$ and finite subset $\mc{A}\subseteq \mbox{MALG}_\mu$, there exist $B_0',\dots ,B_{m-1}'\subseteq X$ such that, letting $B' = \bigcup _{j<m}B_j'$, we have the following for all $g\in F$:
\begin{enumerate}
\item $\mu (B') = t$ and $\mu (g ^a B' \vartriangle B')<\epsilon$;
\item $\sum_{i,j<m} |\mu (g ^a B_i'\cap B_j' ) - t\nu (g ^b B_i\cap B_j )| <\epsilon$;
\item $\sum_{A_1,A_2 \in \mc{A} } |\mu (B' \cap A_1 \cap  g^a A_2) - t\mu (A_1 \cap  g^a A_2)|<\epsilon$.
\end{enumerate}
\end{lem}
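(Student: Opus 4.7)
The plan is to deduce this lemma from the definition of weak containment $t\bfb\oplus (1-t)\bfi _0\prec \bfa$ applied to the partition of $Y\sqcup \{*\}$ obtained by adjoining to $B_0,\dots ,B_{m-1}$ the singleton $\{*\}$ of mass $1-t$, combined with a quantitative mixing principle for approximately invariant subsets of the ergodic action $\bfa$. The degenerate cases are trivial ($B'=\emptyset$ for $t=0$, $B'=X$ for $t=1$), so I may assume $t\in (0,1)$.

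The key auxiliary statement, which I would establish first by a compactness-by-contradiction argument, is the following quantitative strengthening of Lemma \ref{lem:not-strongly-ergodic}: for every finite $F_0\subseteq \Ga$, finite $\mc{A}_0\subseteq \MALG_\mu$, and $\delta >0$, there exist a finite $S^{*}\subseteq \Ga$ and $\eta >0$ such that every measurable $C\subseteq X$ with $\mu (g^a C\vartriangle C)<\eta$ for all $g\in S^{*}$ satisfies
$$|\mu (C\cap A_1\cap g^a A_2)-\mu (C)\mu (A_1\cap g^a A_2)|<\delta $$
for all $g\in F_0$ and $A_1,A_2\in \mc{A}_0$. Indeed, negating this and diagonalizing along an enumeration of $\Ga$ produces a sequence $\{C_n\}$ that is asymptotically invariant with respect to every $g\in \Ga$; after passing to a subsequence along which $\mu (C_n)$ converges to some $s\in [0,1]$, one contradicts the assumption either by direct application of Lemma \ref{lem:not-strongly-ergodic} when $s\in (0,1)$, or by the trivial estimates $\mu (C_n\cap A_1\cap g^a A_2)\to 0$ (resp.\ $\mu (C_n^c\cap A_1\cap g^a A_2)\to 0$) when $s=0$ (resp.\ $s=1$).

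With this in hand, choose $S^{*}$ and $\eta$ for $F_0=F$, $\mc{A}_0=\mc{A}$, and a tolerance $\delta >0$ to be fixed later. Then I would apply the definition of weak containment to the $(m{+}1)$-partition $\{B_0,\dots ,B_{m-1},\{*\}\}$ of $Y\sqcup \{*\}$, the finite set $S=F\cup S^{*}\cup \{e\}$, and an error $\epsilon '>0$ with $\epsilon '\leq \eta$. This yields a partition $\{B_0',\dots ,B_{m-1}',C'\}$ of $X$ satisfying $|\mu (g^a B_i'\cap B_j')-t\nu (g^b B_i\cap B_j)|<\epsilon '$, $\mu (g^a B_i'\cap C')<\epsilon '$, and $|\mu (g^a C'\cap C')-(1-t)|<\epsilon '$ for all $g\in S$ and $i,j<m$. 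Condition (2) follows by summing the first bound (with $\epsilon '<\epsilon /m^2$); summing it again over all $i,j$ and using that $g^b B_0,\dots ,g^b B_{m-1}$ and $B_0,\dots ,B_{m-1}$ both partition $Y$ gives $|\mu (B')-t|=O(m\epsilon ')$ and $|\mu (g^a B'\cap B')-t|=O(m^2\epsilon ')$, which together deliver condition (1).

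For (3), I would apply the quantitative mixing principle to $C'$, which by construction is $\eta$-invariant along $S^{*}$, obtaining
$$|\mu (C'\cap A_1\cap g^a A_2)-(1-t)\mu (A_1\cap g^a A_2)|<\delta +\epsilon '$$
after also using $|\mu (C')-(1-t)|<\epsilon '$. Since $B'\cap A_1\cap g^a A_2=(A_1\cap g^a A_2)\setminus (C'\cap A_1\cap g^a A_2)$, this translates into $|\mu (B'\cap A_1\cap g^a A_2)-t\mu (A_1\cap g^a A_2)|<\delta +\epsilon '$, and a final choice of $\delta $ and $\epsilon '$ of order $\epsilon /(m^2+|\mc{A}|^2)$ secures all three conclusions simultaneously. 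The only genuinely nontrivial step is the compactness argument in the quantitative mixing principle; everything after it is bookkeeping with the defining inequalities of weak containment.
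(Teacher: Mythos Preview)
Your argument is correct and uses the same two ingredients as the paper: weak containment of $t\bfb\oplus (1-t)\bfi_0$ in $\bfa$ to produce the sets $B_0',\dots ,B_{m-1}'$ with approximately invariant union, and the mixing property of asymptotically invariant sets (Lemma~\ref{lem:not-strongly-ergodic}) to obtain condition~(3). The organization differs: the paper builds a whole sequence $\{B^{(n)}_j\}_n$ by applying weak containment along an exhaustion $F_n\uparrow \Ga$ with $\epsilon_n\to 0$, so that $\{B^{(n)}\}_n$ is literally an asymptotically invariant sequence, and then invokes Lemma~\ref{lem:not-strongly-ergodic} directly to pick a good $n$; you instead first extract a finitary ``quantitative mixing'' version of that lemma by a compactness argument, and then apply weak containment once. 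Both routes are equivalent; the paper's is slightly shorter since it avoids the intermediate quantitative statement.

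One small point you skipped: condition~(1) asks for the exact equality $\mu(B')=t$, whereas your argument only gives $|\mu(B')-t|=O(m\epsilon')$. The paper handles this explicitly: since $\bfa$ is ergodic, $\mu$ is either atomless (in which case one adds or removes a set of small measure from some $B_i'$ to achieve equality, at the cost of a constant factor in the error bounds) or is uniform on finitely many atoms (in which case equality is automatic once $\epsilon'$ is small enough). You should insert this adjustment.
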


\begin{proof}
Let $B_0,\dots , B_{m-1}$, $F$, $\epsilon$, and $\mc{A}$ be given as in the statement of the Lemma. Fix an increasing exhaustive sequence $F_0\subseteq F_1\subseteq \cdots$ of finite subsets of $\Ga$, along with a sequence of real numbers $\epsilon _n >0$ with $\epsilon _n \ra 0$. Since $t\bfb\prec \bfa$, for each $n\in \N$ we may find subsets $B^{(n)}_0,\dots , B^{(n)}_{m-1}\subseteq X$ such that, letting $B^{(n)} = \bigcup _{j<m}B_j^{(n)}$,  we have for all $g\in F_n$,
\begin{enumerate}
\item[(i)] $|\mu (B^{(n)}) - t|<\epsilon_n$ and $\mu ( g ^a B^{(n)} \vartriangle B^{(n)})<\epsilon_n$;
\item[(ii)] $\sum_{i,j<m} |\mu ( g ^a B_i^{(n)}\cap B_j^{(n)} ) - t\nu ( g ^b B_i\cap B_j )| <\epsilon_n$.
\end{enumerate}
Because $\bfa$ is ergodic, either $\mu$ has no atoms or it uniformly distributed on a finite set of atoms. In the first case we can add or subtract a small subset from some of the $B_i^{(n)}$'s to ensure the equality $\mu(B^{(n)})=t$ (at the cost of replacing the error tolerance $\epsilon_n$ with $C\epsilon_n$ for some fixed constant $C$). In the second case we will automatically have this equality once $\epsilon_n$ is sufficiently small. In either case, we may assume $\mu(B^{(n)})=t$.

Now (i) says that the sequence $\{ B^{(n)} \}$ is an asymptotically invariant sequence for $\bfa$. So Lemma \ref{lem:not-strongly-ergodic} now implies that there exists an $n$ such that $B'_j:=B^{(n)}_j$ satisfies this lemma.
%
%
%\lambda(\{\gamma \in F:~ \sum_{A_1,A_2 \in \mc{A} } |\mu (B' \cap A_1 \cap \gamma^a A_2) - t\mu (A_1 \cap \gamma^a A_2)|>\epsilon\})<\epsilon$.
%
%
% so by a straightforward generalization of \cite{jones-schmidt-1987} to locally compact groups, it is a mixing sequence, i.e., $|\mu (B^{(n)} \cap A) - \mu (B^{(n)})\mu (A)| \ra 0$ as $n\ra \infty$ for all $A\in \mbox{MALG}_\mu$. Since $\mc{A}\subseteq \mbox{MALG}_\mu$ is finite, if $n\in \N$ is large enough then taking $B'_j = B^{(n)}_j$ for $j = 0,1,\dots ,m-1$ works.
\end{proof}

\begin{lem}\label{lem:tb}
Let $\bfa=\Ga \cc ^a (X,\mu )$ and $\bfb= \Ga \cc ^b (Y,\nu )$ be pmp actions of $\Ga$. Let $t\in [0,1]$. Suppose that $\bfa$ is ergodic and $t\bfb \prec \bfa$. Then $t\bfb\oplus (1-t)\bfa\prec \bfa$.
\end{lem}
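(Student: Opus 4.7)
The plan is to deduce this directly from Lemma \ref{lem:JS}, which was clearly designed with this application in mind. To show $t\bfb\oplus (1-t)\bfa \prec \bfa$, I need to approximate, inside the action $\bfa$, a joint finite Borel partition of $Y\sqcup X$ with specified $\Ga$-statistics. Any such partition can be refined to one of the form $\{B_i\}_{i<m}\cup \{A_j\}_{j<k}$, where $\{B_i\}$ partitions $Y$ and $\{A_j\}$ partitions $X$, so I may assume this shape. The target statistics under $t\bfb\oplus(1-t)\bfa$ are: $t\nu(g^b B_i\cap B_j)$ for $B$-pairs, $(1-t)\mu(g^a A_i\cap A_j)$ for $A$-pairs, and $0$ for mixed pairs.

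First I would fix a finite $F\subseteq \Ga$ and $\epsilon>0$, and apply Lemma \ref{lem:JS} to the partition $\{B_i\}$, the finite set $F$, the tolerance $\epsilon$, and the collection $\mathcal{A}=\{A_0,\ldots,A_{k-1}\}\subseteq \mathrm{MALG}_\mu$. This yields subsets $B_0',\ldots,B_{m-1}'\subseteq X$ whose union $B'$ satisfies $\mu(B')=t$, $\mu(g^aB'\vartriangle B')<\epsilon$, the desired $B$-pair matching $\mu(g^aB_i'\cap B_j')\approx t\nu(g^bB_i\cap B_j)$, and the ``near-independence'' statement $\mu(B'\cap A_i\cap g^a A_j)\approx t\mu(A_i\cap g^a A_j)$ for all $i,j$. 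Then I would define $A_j':=A_j\setminus B'$; together with the $B_i'$, these form a Borel partition of $X$.

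The verification is a short calculation. The $B$-pair statistics are immediate from Lemma \ref{lem:JS}(2). For the mixed pairs,
\[
\mu(g^aB_i'\cap A_j')\le \mu(g^aB'\setminus B')\le \mu(g^aB'\vartriangle B')<\epsilon.
\]
For the $A$-pairs, using $\mu(g^aB'\vartriangle B')<\epsilon$ I may replace $g^aB'$ by $B'$ up to error $\epsilon$ in
\[
\mu(g^aA_i'\cap A_j')=\mu\bigl((g^aA_i\setminus g^aB')\cap(A_j\setminus B')\bigr),
\]
obtaining $\mu(g^aA_i\cap A_j)-\mu(B'\cap g^aA_i\cap A_j)$ up to $\epsilon$, and then applying Lemma \ref{lem:JS}(3) gives $(1-t)\mu(g^aA_i\cap A_j)$ up to $O(\epsilon)$. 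Summing the errors over the finitely many pairs and choosing $\epsilon$ small enough at the start of the argument gives weak containment.

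There is essentially no obstacle left once Lemma \ref{lem:JS} is available; the only subtlety is bookkeeping the errors so that the mixed-pair bound, the approximate $\Ga$-invariance of $B'$, and the near-independence condition combine cleanly. The conceptual point worth emphasizing is that Lemma \ref{lem:JS}(3) is precisely the feature that lets one ``carve off'' the complement $X\setminus B'$ as an approximate copy of $(1-t)\bfa$ independent of the $t\bfb$-piece on $B'$, which is exactly what is needed to realize the disjoint union $t\bfb\oplus (1-t)\bfa$ inside $\bfa$.
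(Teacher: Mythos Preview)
Your proposal is correct and follows the paper's proof essentially line for line: apply Lemma~\ref{lem:JS} with $\mathcal{A}=\{A_j\}$, set $A_j'=A_j\setminus B'$, and verify the three blocks of statistics using conditions (1)--(3). The only difference is cosmetic: you explicitly bound the mixed-pair statistics $\mu(g^aB_i'\cap A_j')$, whereas the paper leaves this implicit in its ``it suffices'' reduction; your extra detail is harmless and arguably clearer.
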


\begin{proof}
Given Borel partitions $\{B_0 ,\dots , B_{m-1}\}$ of $Y$ and $\{A_0,\dots ,A_{n-1}\}$ of $X$ along with a finite subset $F\subseteq \Ga$ and $\epsilon >0$ it suffices to find Borel subsets $B_0',\dots ,B_{m-1}', A_0',\dots ,A_{n-1}'\subseteq X$ such that
\begin{enumerate}
\item[(i)] $A_i'\cap B_j' = \emptyset$ for all $i<n$ and $j<m$;
\item[(ii)] $\sum_{i,j<n} |\mu ( g ^aA_i' \cap A_j' )- (1-t)\mu ( g ^a A_i \cap A_j )|<2\epsilon$ for all $g \in F$;
\item[(iii)] $\sum_{i,j<m} |\mu ( g ^aB_i' \cap B_j' )- t\nu ( g ^b B_i \cap B_j )|<\epsilon$ for all $g \in F$.
\end{enumerate}
Let $\mc{A} = \{ A_i\}_{i=0}^{n-1} \subseteq \mbox{MALG}_\mu$. % be the Boolean generated by $\{ A_i \} _{i< n} \cup \{ \gamma ^a A_i \} _{ i<n}$.  Then $\mc{A}$ is finite.
By hypothesis we have $t\bfb\prec \bfa$, so we may find sets $B_0',\dots ,B_{m-1}'\subseteq X$ and $B'=\bigcup _{j<m} B_j'$, satisfying (1), (2), and (3) of Lemma \ref{lem:JS}. Let $A_i' = A_i\setminus B'$. Then (i) and (iii) are clearly satisfied and it remains to show (ii). Given $ g \in F$ and $i,j<m$ we have $ g ^a A_i' \cap A_j' = ( g^a A_i \cap A_j) \cap ( g^a (X\setminus B')\cap (X\setminus B'))$. So
\begin{align*}
&~~~~|\mu ( g ^aA_i' \cap A_j' )- (1-t)\mu ( g ^a A_i \cap A_j )| \\
&\leq |\mu (( g ^a A_i \cap A_j )\cap (X\setminus B')) - (1-t)\mu ( g^a A_i \cap A_j)| + \mu ( g^a (X\setminus B')\vartriangle (X\setminus B')) \\
&< 2\epsilon,
\end{align*}
where the first term is at most $\epsilon$ by property (3) from Lemma \ref{lem:JS}, and the second term is at most $\epsilon$ by property (1) from that lemma. Therefore,
$$\sum_{i,j<n} |\mu ( g ^aA_i' \cap A_j' )- (1-t)\mu ( g ^a A_i \cap A_j )|<2\epsilon n^2.$$
Since $n$ is fixed we can replace $\epsilon$ with $\epsilon/n^2$ to satisfy (ii).
\end{proof}

%\begin{thm}\label{thm:ergodic-components}
%Let $\bfa=G \cc ^a (X,\mu )$ and $\bfb= G \cc ^b (Y,\nu )$ be a pmp actions of $G$.
%\begin{enumerate}
%\item If $\bfa$ is an ergodic and if $t\bfb\prec \bfa$ for some $0<t\leq 1$ then $\bfb\prec \bfa$.
%\item If $\bfa$ is ergodic and if $\bfb$ is weakly contained in $\bfa$ then almost every ergodic component of $\bfb$ is weakly contained in $\bfa$.
%\end{enumerate}
%\end{thm}

\begin{proof}[Proof of Theorem \ref{thm:weak-decomposition} part (2)]
Assume that $s\bfb\oplus (1-s)\bfc \prec \bfa$ for some $0<s\leq 1$. This immediately implies  $s\bfb \prec \bfa$. Let $r_n = \sum _{k=0}^n s(1-s)^k$. We show by induction on $n\geq 0$ that $r_n\bfb \prec \bfa$. We have $r_0\bfb = s\bfb\prec \bfa$ by hypothesis. Assume for induction that $r_n\bfb\prec \bfa$. Then
\[
r_{n+1}\bfb = (s+ (1-s)r_n)\bfb \prec s\bfb\oplus (1-s)(r_n\bfb) \prec s\bfb\oplus (1-s)\bfa \prec \bfa
\]
where the last weak containment follows from Lemma \ref{lem:tb}. Since $r_n\bfb\prec \bfa$ for all $n$ and $\lim _n r_n = s\sum _{k=0}^\infty (1-s)^k = 1$ it follows that $\bfb\prec \bfa$.

Next we assume that $\bfb \prec \bfa$. Let $\nu = \int _{z\in Z} \nu _z \, d\eta$ be the disintegration of $\nu$ corresponding to the ergodic decomposition of $\bfb$, and for each $z\in Z$ let $\bfb_z = \Ga \cc ^b (Y,\nu _z )$. We must show that $\bfb_z\prec \bfa$ almost surely. Let $C =\{ z\in Z\, :\, \bfb_z \not\prec \bfa \}$. Suppose toward a contradiction that $\eta (C) >0$. Let $\Es{B}$ be a countable Boolean algebra which generates the Borel sigma algebra on $Y$.  For each finite subset $\cQ \subseteq \Es{B}$, consider the space $[0,1]^{\cQ \times \cQ}$ of all functions $\delta :  \cQ \times \cQ \to [0,1]$. This space is separable so there exists a countable dense subset $\Delta_{\cQ} \subseteq [0,1]^{\cQ \times \cQ}$.

Let $\Es{I}$ denote the set of all quadruples $(F, \mc{Q}, \delta ,\epsilon )$ where $F \subseteq \Ga$ is finite, $\mc{Q}\subseteq \cB$ is finite, $\delta:F \times \cQ \times \cQ \to [0,1]$ is such that $\delta(g,\cdot,\cdot)  \in \Delta_{\cQ}$ for all $g\in F$ and  $\epsilon \in (0,1)\cap \Q$.  Let $\Es{I}_0\subseteq \Es{I}$ denote the subset consisting of all $(F, \mc{Q}, \delta ,\epsilon )\in \Es{I}$ for which there does not exist any function $f:\mc{Q}\ra \mbox{MALG}_\mu$ satisfying
$$\sum_{B,B' \in \cQ} |\mu (g^a f(B) \cap f(B') ) - \delta (g ,B,B')|\le 3\epsilon$$
%for all $B,B'\in \mc{Q}$.
% $|\mu (\gamma ^a f(B)\cap f(B')) - \delta (\gamma ,B,B') |<\epsilon$ for all $\gamma \in F$, $B,B'\in \mc{Q}$.
for all $g\in F$. For each $(F, \mc{Q}, \delta ,\epsilon ) \in \Es{I}_0$ define the set
\[
C_{F,\mc{Q},\delta ,\epsilon} := \left\{  z\in C\, : \, \forall g \in F,~\sum_{B,B' \in \cQ} |\nu_z (g^{b_z} B \cap B' ) - \delta (g,B,B')|\le \epsilon   \right\} .
\]
It follows from the definitions that $C= \bigcup _{(F,\mc{Q},\delta ,\epsilon )\in \Es{I}_0} C_{F,\mc{Q},\delta ,\epsilon }$. Since this is a countable union and $\eta (C)>0$ we must have $\eta (C_{F_0,\mc{Q}_0,\delta _0,\epsilon _0 }) =t >0$ for some quadruple $(F_0,\mc{Q}_0,\delta _0,\epsilon _0 )\in \Es{I}_0$. Let $C_0 = C_{F_0,\mc{Q}_0,\delta _0,\epsilon _0}$ and define
\begin{align*}
\bfb_0 &= \int _{z\in C_0} \bfb_z \, d\eta _{C_0} = \Ga \cc ^b (Y, \nu _{C_0}) \\
\bfb_1 &= \int _{z\in Z\setminus C_0} \bfb_z \, d\eta _{Z\setminus C_0} = \Ga \cc ^b (Y,\nu _{Z\setminus C_0})
\end{align*}
where $\eta _{C_0}$ is the normalized restriction of $\eta$ to $C_0$ and $\nu _{C_0} = \int _z\nu _z \, d\eta _{C_0}$, and similarly for $\eta _{Z\setminus C_0}$ and $\nu _{Z\setminus C_0}$. Then $\bfa\succ \bfb \cong t \bfb_0 \oplus (1-t)\bfb_1$. So by the first part of this proof we have $\bfa\succ \bfb_0$.

%Let $D = \{ (g ,z ) \in F\times C_0 \csuchthat \sum_{B,B' \in \cQ} |\nu_z (g^{b_z} B \cap B' ) - \delta (g,B,B')|>\epsilon \}$. By definition of the set $C_0$ we have
%\begin{align*}
%\int _{g\in F}\eta _{C_0} (D_g) \, d\lambda  &= \int _{z\in C_0} \lambda (D^z)  \, d\eta _{C_0} <\int _{z\in C_0}\epsilon ^2 \, d\eta _{C_0}  = \epsilon ^2
%\end{align*}
%and thus $\lambda ( \{ g\in F \csuchthat \eta _{C_0} (D_g) > \epsilon \} ) < \epsilon$. For $g\in F$ with $\eta _{C_0}(D_g) \leq \epsilon$ we have $\sum_{B,B' \in \cQ} |\nu_{C_0} (g ^{b_0} B \cap B' ) - \delta (g ,B,B')|\leq \int _{z\in C_0\setminus D_g}\epsilon \, d\eta _{C_0} + \eta _{C_0}(D_g) < 2\epsilon$, and therefore
%$$\lambda\left(\left\{ g\in F:~\sum_{B,B' \in \cQ} |\nu_{C_0} (g ^{b_0} B \cap B' ) - \delta (g ,B,B')|>2\epsilon \right\}\right)< \epsilon.$$

Since $\bfb_0\prec \bfa$ there exists some $f:\mc{Q}_0\ra \mbox{MALG}_\mu$ such that
$$\sum_{B,B' \in \cQ} |\mu (g ^a f(B) \cap f(B') ) - \delta (g ,B,B')|\le 2\epsilon_0$$
for all $g \in F_0$ which contradicts that $(F_0,\mc{Q}_0,\delta _0,\epsilon _0 )\in \Es{I}_0$.
\end{proof}

\section{Stable weak equivalence classes}

The purpose of this section is to prove:
\begin{thm}\label{thm:stable}
If $\bfa$ is ergodic then $SW(\bfa)$ is a subsimplex of $\Prob_\Ga(\Ca^\Ga)$. In other words, it is a closed convex subset whose extreme points are extreme points of $\Prob_\Ga(\Ca^\Ga)$. Moreover, $\bfa$ is strongly ergodic if and only if $W(\bfa)$ is the set of extreme points of $SW(\bfa)$, in which case $SW(\bfa)$ is a Bauer simplex. If $\bfa$ is ergodic but not strongly ergodic then $SW(\bfa)=W(\bfa)$ is a Poulsen simplex.
\end{thm}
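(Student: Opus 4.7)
The plan is to exhibit $SW(\bfa)$ as a closed face of the Choquet simplex $\sP:=\Prob_\Ga(\Ca^\Ga)$. Once this is done, the subsimplex property, together with the assertion that the extreme points of $SW(\bfa)$ are extreme points of $\sP$, becomes automatic: any closed face of a Choquet simplex is itself a Choquet simplex, and its extreme points are exactly its intersection with the extreme points of the ambient simplex. The face property will be established in two different ways according to whether $\bfa$ is strongly ergodic.

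In the strongly ergodic case, $W(\bfa)\subseteq \sP^{erg}$ (since weak containment preserves strong ergodicity, which implies ergodicity) and $W(\bfa)$ is closed. I will then invoke the general Choquet-theoretic fact that the closed convex hull $F$ of a closed subset $K$ of the extreme points of a Choquet simplex is a closed face with extreme points $K$. The essential step is to show that the ergodic decomposition $\pi_\mu$ of every $\mu\in F$ is supported on $K$: approximate $\mu$ by convex combinations of elements of $K$ (whose ergodic decompositions are manifestly $K$-supported), extract a weak* limit of these decompositions in $\Prob(\sP)$, and use uniqueness of the representing measure on $\sP^{erg}$ in the simplex $\sP$ to identify this limit with $\pi_\mu$. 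Affineness of $\pi$ then forces $\pi_{\nu_i}$ to be $K$-supported for any convex decomposition $\mu=s\nu_1+(1-s)\nu_2$ in $\sP$, so $\nu_i\in F$. Applied with $K=W(\bfa)$, this makes $SW(\bfa)$ a closed face and, since $W(\bfa)$ is closed, a Bauer simplex with extreme points $W(\bfa)$.

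In the ergodic-but-not-strongly-ergodic case, I first show $W(\bfa)$ is itself convex, hence $W(\bfa)=SW(\bfa)$. Given $\bfb_1,\bfb_2\prec \bfa$ and $t\in (0,1)$, Theorem~\ref{thm:not-strongly-ergodic} gives $t\bfa\prec \bfa$ whence $t\bfb_1\prec \bfa$; Lemma~\ref{lem:tb} then yields $t\bfb_1\oplus (1-t)\bfa\prec \bfa$; combining with $(1-t)\bfb_2\prec (1-t)\bfa$ gives $t\bfb_1\oplus (1-t)\bfb_2\prec \bfa$. The face property of the closed convex set $W(\bfa)$ then follows from Theorem~\ref{thm:weak-decomposition}(2), which forces the ergodic decomposition of every $\mu\in W(\bfa)$ to be supported in $W(\bfa)\cap \sP^{erg}$; combined with affineness of $\pi$ this keeps any $\sP$-decomposition $\mu=s\nu_1+(1-s)\nu_2$ inside $W(\bfa)$. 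Thus $SW(\bfa)=W(\bfa)$ is a closed face with extreme points $W(\bfa)\cap \sP^{erg}$. Since $\bfa$ is ergodic, $\Factor(\bfa)\subseteq \sP^{erg}$, and $\Factor(\bfa)$ is dense in $W(\bfa)$ by definition; the extreme points are therefore dense in $SW(\bfa)$, yielding the Poulsen conclusion.

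Finally, for the strong ergodicity biconditional, the forward direction was settled in the first case. Conversely, if the extreme points of $SW(\bfa)$ coincide with $W(\bfa)$, then since the extreme points of any face of $\sP$ are ergodic, $W(\bfa)\subseteq \sP^{erg}$; the equivalent definition of strong ergodicity recorded in the excerpt then forces $\bfa$ to be strongly ergodic. I expect the main technical delicacy to be the Choquet lemma in the strongly ergodic case, where one must navigate the non-continuity of the ergodic decomposition via weak* compactness in $\Prob(\sP)$ and the uniqueness of the representing measure supported on extreme points.
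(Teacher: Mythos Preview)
Your argument is correct and tracks the paper's proof closely. In the non--strongly-ergodic case the two are essentially identical: both establish convexity of $W(\bfa)$ via Theorem~\ref{thm:not-strongly-ergodic} and Lemma~\ref{lem:tb} (this is packaged as Lemma~\ref{lem:thing1} in the paper), deduce $W(\bfa)=SW(\bfa)$, and obtain the Poulsen property from the density of $\Factor(\bfa)\subseteq\sP^{erg}$. The paper differs organizationally in that it proves the subsimplex claim (extreme points of $SW(\bfa)$ are ergodic) \emph{uniformly}, by applying Theorem~\ref{thm:weak-decomposition}(2) to a mutually singular decomposition $\nu=t\nu_1+(1-t)\nu_2$ of a non-ergodic $\nu\in SW(\bfa)$, and only afterwards splits into cases. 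You instead split cases from the outset and prove the stronger \emph{face} property; in particular, in the strongly ergodic case you replace the appeal to Theorem~\ref{thm:weak-decomposition}(2) by a self-contained Choquet argument (the closed convex hull of a closed subset $K$ of the extreme boundary of a simplex is a face with extreme boundary $K$). Your route has the advantage of making explicit the identification $\partial_e SW(\bfa)=W(\bfa)$ in the Bauer case and of sidestepping the question of how Theorem~\ref{thm:weak-decomposition}(2) is to be applied when the literal target $\bfa\times\bfi$ is not ergodic; the paper's route is shorter but leaves both of those points implicit.
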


%Several parts of this theorem have appeared before in the literature. In

%This theorem follows immediately from the next few lemmas.

\begin{lem}\label{lem:stable1}
For any pmp action $\bfa$ of $\Ga$, $SW(\bfa)$ is the closed convex hull of $W(\bfa)$.
\end{lem}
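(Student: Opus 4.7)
The plan is to prove the two inclusions $SW(\bfa) = W(\bfa\times\bfi) \subseteq \overline{\Conv}(W(\bfa))$ and $\overline{\Conv}(W(\bfa)) \subseteq W(\bfa\times\bfi)$ separately, both essentially by direct construction of equivariant maps into $\Ca^{\Ga}$.

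For the forward inclusion, I would start with an arbitrary $\Ga$-equivariant measurable map $\Phi: X \times [0,1] \to \Ca^{\Ga}$ and use that the $\Ga$-action on $[0,1]$ is trivial. This means that for (almost) every $t \in [0,1]$, the slice map $\Phi_t : X \to \Ca^{\Ga}$ defined by $\Phi_t(x) = \Phi(x,t)$ is itself $\Ga$-equivariant, so $(\Phi_t)_*\mu \in \Factor(\bfa) \subseteq W(\bfa)$. By Fubini, for every Borel set $E \subseteq \Ca^{\Ga}$,
\[
\Phi_*(\mu \times \mathrm{Leb})(E) = \int_0^1 (\Phi_t)_*\mu(E)\, dt,
\]
so $\Phi_*(\mu \times \mathrm{Leb})$ is the barycenter of a probability measure supported on $W(\bfa)$, and hence lies in the compact convex set $\overline{\Conv}(W(\bfa))$. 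Taking weak* closures gives $W(\bfa\times\bfi)\subseteq \overline{\Conv}(W(\bfa))$.

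For the reverse inclusion, the inclusion $W(\bfa)\subseteq W(\bfa\times\bfi)$ is immediate from the fact that $\bfa$ is a factor of $\bfa\times\bfi$ via the first-coordinate projection, so I only need to show that $W(\bfa\times\bfi)$ is convex. Given $\nu_1,\nu_2 \in W(\bfa\times\bfi)$ and $t \in (0,1)$, approximate each $\nu_k$ by $(\Phi_n^{k})_*(\mu\times\mathrm{Leb})$ for $\Ga$-equivariant maps $\Phi_n^k : X \times [0,1] \to \Ca^{\Ga}$, and splice them together by defining
\[
\Psi_n(x,s) = \begin{cases} \Phi_n^{1}(x, s/t) & s\in[0,t], \\ \Phi_n^{2}(x, (s-t)/(1-t)) & s\in(t,1]. \end{cases}
\]
Since $\Ga$ acts trivially in the second coordinate, $\Psi_n$ is $\Ga$-equivariant, and a change-of-variables computation shows $(\Psi_n)_*(\mu\times\mathrm{Leb}) = t(\Phi_n^{1})_*(\mu\times\mathrm{Leb}) + (1-t)(\Phi_n^{2})_*(\mu\times\mathrm{Leb})$. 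Passing to the weak* limit yields $t\nu_1 + (1-t)\nu_2 \in W(\bfa\times\bfi)$.

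I don't anticipate a substantial obstacle: the statement is really an unpacking of definitions together with the trivial observation that $\bfi$ is an uncountable disjoint union (integral) of copies of $\bfi_0$. The only small care needed is in verifying the barycentric formula via Fubini and in ensuring the splicing map in the convexity argument is measurable and $\Ga$-equivariant, both of which are straightforward.
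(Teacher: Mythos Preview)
Your proposal is correct and follows essentially the same approach as the paper: both prove $SW(\bfa)\subseteq\overline{\Conv}(W(\bfa))$ by slicing an equivariant $\Phi:X\times[0,1]\to\Ca^\Ga$ at each $t$ and applying Fubini, and both prove the reverse inclusion by splicing factor maps along the $[0,1]$ coordinate. The only cosmetic difference is that the paper splices maps out of $X$ to show that finite convex combinations of points of $W(\bfa)$ lie in $SW(\bfa)$, whereas you splice maps out of $X\times[0,1]$ to show directly that $W(\bfa\times\bfi)$ is convex; these are the same idea and neither organization offers a real advantage over the other.
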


\begin{proof}
We may assume $\bfa=\Ga \cc^a (X,\mu_a)$. We first show that $SW(\bfa)$ contains the closed convex hull of $W(\bfa)$. So let $t_1,\ldots, t_n>0$ with $\sum_i t_i=1$ and $\mu_1,\ldots, \mu_n \in W(\bfa)$. It suffices to show that $\sum_i t_i\mu_i \in SW(\bfa)$. By definition there exist factor maps $\varphi_{ij}:X \to \Ca^\Ga$ such that $\lim_{j\to\infty} \varphi_{ij*}\mu_a = \mu_i$ for all $i$. Define $\Phi_j:X \times [0,1] \to \Ca^\Ga$ by $\Phi_j(x,t) = \varphi_{ij}(x)$ if $i$ is such that $\sum_{k<i} t_k < t \le \sum_{k \le i} t_k$. It follows that $\Phi_{j*}(\mu_a \times \textrm{Leb})= \sum_i t_i \varphi_{ij*}\mu_a$. So $\lim_{j\to\infty} \Phi_{j*}(\mu_a \times \textrm{Leb})= \sum_i t_i\mu_i$ as required.

Next we show $SW(\bfa)$ is contained in the closed convex hull of $W(\bfa)$. So let $\Phi:X\times [0,1] \to \Ca^\Ga$ be a factor map. Let $\phi_t$ be the restriction of $\Phi$ to $X \times \{t\}$. Observe that $\phi_t$ is also a factor map and
$$\Phi_{*}(\mu_a \times \textrm{Leb}) = \int_0^1 \phi_{t*}\mu_a~dt.$$
Because $\phi_t$ can be regarded as factor map of $\bfa$, this shows that $\Phi_{*}(\mu_a \times \textrm{Leb})$ is contained in the closed convex hull of $W(\bfa)$. Because $\Phi$ is arbitrary, $SW(\bfa)$ is contained in the closed convex hull of $W(\bfa)$.
\end{proof}

%\begin{lem}
%A pmp action $\bfa$ of $\Ga$ is strongly ergodic if and only if $W(\bfa)$ is contained in the set of extreme points of $\Prob_\Ga(\Ca^\Ga)$.
%\end{lem}

%\begin{proof}
%This follows immediately from the definition of strong ergodicity.
%\end{proof}

\begin{lem}\label{lem:thing1}
Let $\bfa$ be an ergodic but not strongly ergodic action pmp action of $\Ga$. Then $W(\bfa)=SW(\bfa)$.% and every extreme point of $ccW(\bfa)$ is an extreme point of $P$. In particular, $ccW(\bfa)$ is a simplex.
\end{lem}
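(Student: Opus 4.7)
The plan is to prove $W(\bfa)=SW(\bfa)$ by showing that $W(\bfa)$ is already convex. Combined with Lemma~\ref{lem:stable1}, which identifies $SW(\bfa)$ as the closed convex hull of $W(\bfa)$, together with the fact that $W(\bfa)$ is closed by construction, this yields the equality immediately.

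Fix $\nu_1,\nu_2\in W(\bfa)$ and $t\in(0,1)$; I want to realize $t\nu_1+(1-t)\nu_2$ as a weak* limit of pushforwards $\Psi_*\mu$ for factor maps $\Psi:X\to \Ca^\Ga$. Since $W(\bfa)$ is the closure of $\Factor(\bfa)$, a routine diagonalization reduces the task to the case $\nu_i=\Phi_{i*}\mu$ with $\Phi_i(x)(g)=\phi_i(g^{-1}x)$ for some measurable $\phi_i:X\to \Ca$. The construction is then to splice $\phi_1$ and $\phi_2$ along an asymptotically invariant set. By the corollary to Lemma~\ref{lem:not-strongly-ergodic}, fix an asymptotically invariant sequence $B_n\subseteq X$ with $\mu(B_n)\to t$; define $\psi_n(x)=\phi_1(x)$ on $B_n$ and $\psi_n(x)=\phi_2(x)$ on $X\setminus B_n$, and let $\Psi_n:X\to \Ca^\Ga$ be the induced $\Ga$-equivariant extension.

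The key calculation will be that for any finite $S\subseteq \Ga$, on the set $B_n^S:=\bigcap_{g\in S}gB_n$ one has $\Psi_n(x)(g)=\phi_1(g^{-1}x)=\Phi_1(x)(g)$ for every $g\in S$, and symmetrically $\Psi_n(x)\res S=\Phi_2(x)\res S$ on $C_n^S:=\bigcap_{g\in S}g(X\setminus B_n)$. Asymptotic invariance of $B_n$ forces $B_n^S$ and $C_n^S$ to be asymptotically invariant with measures tending to $t$ and $1-t$ respectively, while the "mixed" leftover has vanishing measure. Applying Lemma~\ref{lem:not-strongly-ergodic} (asymptotically invariant sets are mixing) to the measurable sets $\Phi_i^{-1}(U)$, for any clopen cylinder $U\subseteq \Ca^\Ga$ depending only on coordinates in $S$ I would obtain
\[
\Psi_{n*}\mu(U)=\mu(\Phi_1^{-1}(U)\cap B_n^S)+\mu(\Phi_2^{-1}(U)\cap C_n^S)+o(1)\longrightarrow t\nu_1(U)+(1-t)\nu_2(U).
\]
Since finite-coordinate clopen cylinders generate the weak* topology on $\sP$, this yields $\Psi_{n*}\mu\to t\nu_1+(1-t)\nu_2$ and hence membership in $W(\bfa)$.

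The main obstacle I anticipate is purely bookkeeping: the splice $\psi_n$ is honestly $\Ga$-equivariant only when $B_n$ is actually $\Ga$-invariant, so I must carefully propagate the asymptotic invariance from $B_n$ to the finite intersections $B_n^S,C_n^S$ (a union-bound on symmetric differences) and quantify the error contributed by the mixed set. Once that is in place, the identity $\Psi_n(x)\res S=\Phi_i(x)\res S$ on the pure sets is exact rather than approximate, so the mixing lemma delivers the desired finite-marginal convergence without further slack.
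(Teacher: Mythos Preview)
Your argument is correct. The paper takes a slightly more modular route: it first applies Theorem~\ref{thm:not-strongly-ergodic} and Lemma~\ref{lem:tb} to conclude $t\bfa\oplus(1-t)\bfa\prec\bfa$ for every $t\in(0,1)$, and by induction $\bigoplus_i t_i\bfa\prec\bfa$ for any finite convex weights. Since $W\big(\bigoplus_i t_i\bfa\big)$ visibly contains every combination $\sum_i t_i\mu_i$ with $\mu_i\in W(\bfa)$, convexity of $W(\bfa)$ follows, and Lemma~\ref{lem:stable1} finishes exactly as in your outline. Your splicing construction is essentially this same argument unpacked at the level of individual factor maps: the map $\Psi_n$ is precisely the factor map one would build to witness $t\bfa\oplus(1-t)\bfa\prec\bfa$ when the two copies of $\bfa$ are pushed forward via $\Phi_1$ and $\Phi_2$, and your appeal to Lemma~\ref{lem:not-strongly-ergodic} on the sets $B_n^S$, $C_n^S$ plays the role that Lemma~\ref{lem:tb} plays in the paper. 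The paper's route reuses already-established weak-containment lemmas, while yours is more self-contained and bypasses the intermediate statement $\bigoplus_i t_i\bfa\prec\bfa$ entirely.
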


\begin{proof}
By Theorem \ref{thm:not-strongly-ergodic} and Lemma \ref{lem:tb}, $t\bfa \oplus (1-t)\bfa \prec \bfa$ for any $t\in (0,1)$. By induction, this implies $\oplus_i t_i \bfa \prec \bfa$ for any sequence $t_1,\ldots, t_n >0$ with $\sum_i t_i=1$.  In other words, $W(\oplus_i t_i \bfa) \subseteq W(\bfa)$. However, $W(\oplus_i t_i \bfa)$ contains $\oplus_i t_i W(\bfa)$ where the latter is defined to be the collection of all measures of the form $\sum_i t_i \mu_i$ with $\mu_i \in W(\bfa)$. Thus $W(\bfa)$ is convex. Lemma \ref{lem:stable1} now implies $W(\bfa)=SW(\bfa)$.

\end{proof}

\begin{proof}[Proof of Theorem \ref{thm:stable}]
To prove the first statement, suppose $\nu \in SW(\bfa) \subseteq \Prob_\Ga(\Ca^\Ga)$  is not ergodic. So we can write it as $\nu = t\nu_1 + (1-t)\nu_2$ for some $\nu_1,\nu_2 \in \Prob_\Ga(\Ca^\Ga)$ such that $\nu_1$ and $\nu_2$ are mutually singular and $t \in (0,1)$. However, Part 2 of Theorem \ref{thm:weak-decomposition} implies that $\nu_1,\nu_2 \in SW(\bfa)$. Therefore, $\nu$ cannot be an extreme point of $SW(\bfa)$. This proves that all extreme points of $SW(\bfa)$ are extreme points of $\Prob_\Ga(\Ca^\Ga)$.

If $\bfa$ is strongly ergodic then it follows immediately that every measure in $W(\bfa)$ is ergodic and therefore extreme in $\Prob_\Ga(\Ca^\Ga)$. Since $SW(\bfa)$ is the closed convex hull of $W(\bfa)$ this handles this case.

Now suppose $\bfa$ is ergodic but not strongly ergodic. To see that the extreme points are dense, observe that every measure in $\Factor(\bfa)$ is ergodic (hence extreme) and $SW(\bfa)=W(\bfa)$ is the weak* closure of $\Factor(\bfa)$ by Lemma \ref{lem:thing1}.

\end{proof}

\section{Compactness}
For simplicity, in this section we let $\sP=\Prob_\Ga(\Ca^\Ga)$. This is a compact metrizable space in the weak* topology. Let $\rm{Closed}(\sP)$ be the space of all closed subsets of $\sP$ with the Vietoris topology with respect to which $\rm{Closed}(\sP)$ is a compact metrizable space. Let $\Es{W}_\Ga:=\{W(\bfa)\}_{\bfa} \subseteq \rm{Closed}(\sP)$ and $\Es{SW}_\Ga := \{ SW(\bfa)\}_{\bfa}\subseteq \rm{Closed}(\sP)$.  In \cite{T-D12} it is proven that the topologies induced on $\Es{W}_\Ga$ and $\Es{SW}_\Ga$ from their inclusions into $\Closed(\sP)$ are equivalent to the topologies defined in \cite{AE11} (another proof is in \cite[Theorem 3.1]{burton-weak-2015}). The next theorem is the main result of \cite{AE11}:

\begin{thm}\label{thm:compact}
Both $\Es{W}_\Ga$ and $\Es{SW}_\Ga$ are closed subsets of $\rm{Closed}(\sP)$. Therefore, $\Es{W}_\Ga$ and $\Es{SW}_\Ga$ are compact metrizable spaces.
\end{thm}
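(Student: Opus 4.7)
Since $\sP$ is compact metrizable (in the weak* topology), the standard Vietoris hyperspace $\Closed(\sP)$ is compact metrizable as well. Thus it suffices to show that $\Es{W}_\Ga$ and $\Es{SW}_\Ga$ are closed in $\Closed(\sP)$, from which compactness and metrizability follow automatically.

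For $\Es{W}_\Ga$, I would take a Vietoris-convergent sequence $W(\bfa_n)\to K$ and produce a pmp action $\bfb$ with $W(\bfb)=K$. The natural candidate is the ultraproduct construction: fix a non-principal ultrafilter $\cU$ on $\N$ and let $\bfb := \prod _{n\to \cU}\bfa_n$, the (Loeb-measure) ultraproduct action, which is a genuine pmp action of $\Ga$ on a standard probability space, as developed in \cite{CKT13, T-D12}. The two facts I would invoke about this construction are (a) every factor $\Phi\colon \bfb\to \Ca^\Ga$ is, up to null sets, an ultralimit $\Phi=\lim_{n\to\cU}\Phi_n$ of factor maps $\Phi_n\colon \bfa_n\to \Ca^\Ga$, and moreover $\Phi_*\mu_\bfb = \lim_{n\to\cU}(\Phi_n)_*\mu_{\bfa_n}$; and (b) conversely every such ultralimit of factor measures arises in this way. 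This uses only the Polishness of $\Ca^\Ga$, separability of the measure algebra, and a \L{}os-type transfer principle, all of which are established in \cite{CKT13}. Consequently, after taking closure,
\[
W(\bfb)=\Bigl\{\lim_{n\to\cU}\nu_n\, :\, \nu_n\in W(\bfa_n)\text{ for all }n\Bigr\}.
\]

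I would then finish the argument for $\Es{W}_\Ga$ by a purely topological lemma about Vietoris limits: in a compact metrizable space $\sP$, a sequence $K_n\to K$ in $\Closed(\sP)$ iff, for every non-principal ultrafilter $\cU$ on $\N$, the set of $\cU$-ultralimits of sequences $(\nu_n)$ with $\nu_n\in K_n$ is exactly $K$. Combining this with the displayed formula above yields $W(\bfb)=K$, so $K\in \Es{W}_\Ga$ and $\Es{W}_\Ga$ is closed.

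For $\Es{SW}_\Ga$ I would give two reductions. The cleanest, as noted in the footnote following the definition of $\Es{SW}_\Ga$, uses Lemma \ref{lem:stable1}: $\Es{SW}_\Ga$ is exactly the subset of $\Es{W}_\Ga$ consisting of \emph{convex} elements of $\Closed(\sP)$, and convexity is a closed condition under Vietoris convergence (if $K_n$ is convex and $K_n\to K$, and $\nu=t\nu_1+(1-t)\nu_2$ with $\nu_1,\nu_2\in K$, then choosing $\nu_i^{(n)}\in K_n$ with $\nu_i^{(n)}\to \nu_i$ gives $t\nu_1^{(n)}+(1-t)\nu_2^{(n)}\in K_n$ with limit $\nu$, so $\nu\in K$). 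Hence $\Es{SW}_\Ga=\Es{W}_\Ga\cap \mathrm{CloCon}(\sP)$ is closed as an intersection of closed sets. Alternatively, one can repeat the ultraproduct argument directly with $\bfa_n\times \bfi$ in place of $\bfa_n$ to obtain $\bfb$ with $SW(\bfb)=K$.

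The main obstacle is verifying property (a) above, i.e.\ that every factor map from the ultraproduct action essentially comes from an ultralimit of factor maps on the $\bfa_n$'s. This is the technical core of the ultraproduct machinery, and rather than reproving it I would cite \cite[\S3]{CKT13} and \cite[\S3]{T-D12}; once granted, the remainder of the argument is the short hyperspace computation sketched above.
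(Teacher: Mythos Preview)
The paper does not prove Theorem \ref{thm:compact}; it cites it as the main result of \cite{AE11}, noting that an alternative proof appears in \cite{T-D12}. Your proposal is essentially the ultraproduct argument from \cite{T-D12}, and the overall strategy---as well as the deduction for $\Es{SW}_\Ga$ via Lemma \ref{lem:stable1} and the closedness of convexity under Vietoris limits, which is exactly the footnote argument in the paper---is correct.

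There is, however, one genuine gap. You assert that the Loeb ultraproduct $\bfb=\prod_{n\to\cU}\bfa_n$ is ``a genuine pmp action of $\Ga$ on a standard probability space.'' This is false: the measure algebra of the ultraproduct is in general non-separable, so $\bfb$ is not an action on a standard probability space, and hence the identity $W(\bfb)=K$ does not by itself place $K$ in $\Es{W}_\Ga$. The references \cite{CKT13,T-D12} are explicit about this non-standardness (see for instance the discussion of ultrapowers in \S\ref{sec:properties} of the present paper). The standard fix, which you should make explicit, is to pass to a separable factor: choose a countable dense family of factor maps $\Phi^{(k)}:\bfb\to\Ca^\Ga$ (whose pushforward measures are dense in $K$), and let $\bfc$ be the factor of $\bfb$ generated by the product map $\prod_k\Phi^{(k)}:\bfb\to(\Ca^\Ga)^\N$. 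Then $\bfc$ is a pmp action on a standard probability space with $\Factor(\bfc)\supseteq\{\Phi^{(k)}_*\mu_\bfb\}_k$ and $\Factor(\bfc)\subseteq\Factor(\bfb)$, hence $W(\bfc)=W(\bfb)=K$. With this correction the argument goes through. The remaining ingredients---your hyperspace lemma identifying Vietoris limits with $\cU$-ultralimit sets, and the facts (a) and (b) about factors of ultraproducts---are correct and are indeed established in \cite{CKT13}.
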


For each $\rho \in \sP$ let $W(\rho ):=W(\bfa) \subseteq \sP$ where $\bfa=\Ga \cc (\Ca^\Ga,\rho)$. Similarly, let $SW(\rho):=SW(\bfa)$. We will frequently make use of the following facts:
\begin{enumerate}
\item[(1)] For every pmp action $\bfa =\Ga \cc ^a (X,\mu )$ there is a measure $\eta \in \sP$ such that $\Ga \cc (\Ca ^\Ga , \eta )$ is isomorphic to $\bfa$.
\item[(2)] For any two pmp actions $\bfa _0 =\Ga \cc ^{a_0} (X_0,\mu _0 )$ and $\bfa _1 =\Ga \cc ^{a_1} (X_1,\mu _1 )$ of $\Gamma$, there are measures $\eta _0, \eta _1 \in \sP$ whose supports are disjoint such that $\Ga \cc (\Ca ^\Ga , \eta _0)$ is isomorphic to $\bfa _0$ and $\Ga \cc (\Ca ^\Ga , \eta _1)$ is isomorphic to $\bfa _1$
\end{enumerate}

Clearly (1) follows from (2). To see (2), let $C_0$ and $C_1$ be nonempty disjoint clopen subsets of $\Ca$ and for $i=0,1$, let $\varphi _i :X_i\rightarrow C_i$ be injections, and define $\Phi _i :X_i \rightarrow \Ca ^{\Ga}$ by $\Phi _i (x)(g) = \varphi _i ((g^{-1})^{a_i}x)$. Then $\Phi _i$ is injective and equivariant, and the supports of $\eta _i := (\Phi _i)_*\mu _i$ are contained in $C_i^{\Ga}$, so the measures $\eta _0$, $\eta _1$ work.

\medskip

We introduce some notation which will be useful throughout the rest of the paper.

\begin{notation}
To ease notation, we will not distinguish between a measure $\mu \in \sP$ and the corresponding action $\Ga \cc (\Ca^\Ga,\mu)$. For example, we will say that a measure $\mu \in \sP$ is ergodic or essentially free if the corresponding action is. Similarly if $\rho_1,\rho_2 \in \sP$ we will write $\rho_1 \prec \rho_2$ to mean that the action corresponding to $\rho_1$ is weakly contained in the action corresponding to $\rho_2$.
\end{notation}

\subsection{Lower semi-continuity}
As a corollary to Theorem \ref{thm:compact}, we will show that $SW$ is lower semi-continuous as a map from $\sP$ to $\Es{SW}_\Ga$. In general, if $C_1,C_2,\ldots \subseteq \sP$ are closed subsets then we define $\liminf_i C_i$ to be the set of all $\mu_\infty \in \sP$ such that there exist $\mu_i \in C_i$ (for $i\in \N$) such that $\lim_i \mu_i = \mu_\infty$.

\begin{cor}\label{cor:semicontinuity}[$SW$ is lower semi-continuous]
If $\{\mu_i\}_i$ is a sequence in $\sP$ and $\lim_i \mu_i = \mu_\infty$ then
$$SW(\mu_\infty) \subseteq \liminf_i SW(\mu_i).$$
\end{cor}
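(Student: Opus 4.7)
The plan is to combine the compactness of $\Es{SW}_\Ga$ (Theorem~\ref{thm:compact}) with a monotonicity principle for $SW$ relative to weak$^*$ convergence of measures, and then to use a standard subsequence argument to pass from a Vietoris limit back to the $\liminf$.

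First I would establish the following monotonicity lemma: \emph{if $\mu\in SW(\bfb)$ for some pmp action $\bfb$, then $SW(\Ga\cc(\Ca^\Ga,\mu))\subseteq SW(\bfb)$}. The heart of the matter is that $\mu\in W(\bfc)$ implies $\Ga\cc(\Ca^\Ga,\mu)\prec \bfc$: approximating an arbitrary measurable partition of $\Ca^\Ga$ by a clopen partition and using that clopen cylinder sets are continuity sets for weak$^*$ convergence, one sees that partition traces under $\mu$ can be matched by traces of pullback partitions coming from factor maps $X_\bfc\to\Ca^\Ga$. Applying this with $\bfc=\bfb\times\bfi$ yields $\Ga\cc(\Ca^\Ga,\mu)\prec \bfb\times\bfi$, so by the Abert--Weiss characterization $W(\Ga\cc(\Ca^\Ga,\mu))\subseteq W(\bfb\times\bfi)=SW(\bfb)$; since $SW(\bfb)$ is closed and convex, Lemma~\ref{lem:stable1} upgrades this to $SW(\Ga\cc(\Ca^\Ga,\mu))\subseteq SW(\bfb)$.

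With the monotonicity lemma in hand, a standard reduction shows it suffices to verify: \emph{every subsequence of $\{SW(\mu_i)\}$ admits a further subsequence that converges in $\Closed(\sP)$ to some $F$ with $SW(\mu_\infty)\subseteq F$}. Indeed, if $SW(\mu_\infty)\not\subseteq\liminf_i SW(\mu_i)$, one produces $\nu\in SW(\mu_\infty)$, an open neighborhood $U$ of $\nu$, and a subsequence with $SW(\mu_{i_k})\cap U=\emptyset$; extracting a further subsequence converging in $\Closed(\sP)$ (by compactness of the Vietoris topology on the hyperspace of a compact metrizable space) yields a limit $F$ not containing $\nu$, a contradiction. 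To produce such a sub-subsequence for any given subsequence, apply compactness of $\Closed(\sP)$; suppose $SW(\mu_{i_k})\to F$ in the Vietoris topology. By Theorem~\ref{thm:compact}, $F\in\Es{SW}_\Ga$, so $F=SW(\bfb)$ for some pmp action $\bfb$. Since each $\mu_{i_k}$ is tautologically a factor of the action it represents, $\mu_{i_k}\in SW(\mu_{i_k})$, and Vietoris convergence combined with $\mu_{i_k}\to\mu_\infty$ forces $\mu_\infty\in F=SW(\bfb)$. The monotonicity lemma then gives $SW(\mu_\infty)\subseteq SW(\bfb)=F$.

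The only nontrivial step is the monotonicity lemma; once weak$^*$ convergence in $\sP$ is translated into the partition form of weak containment via a clopen-approximation argument, everything else is a routine compactness/subsequence extraction.
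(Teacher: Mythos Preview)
Your proof is correct and follows essentially the same route as the paper: pass to a subsequence along which $SW(\mu_i)$ converges in $\Es{SW}_\Ga$ (using Theorem~\ref{thm:compact}) to some $SW(\nu)$, observe that $\mu_\infty\in SW(\nu)$ since $\mu_i\in SW(\mu_i)$ and $\mu_i\to\mu_\infty$, and conclude $SW(\mu_\infty)\subseteq SW(\nu)$. The only difference is one of detail: you spell out the implication ``$\mu_\infty\in SW(\nu)\Rightarrow \mu_\infty\prec_s\nu$'' via a clopen-approximation argument, whereas the paper treats this as immediate from the Ab\'ert--Weiss characterization $\bfa\prec\bfb\iff W(\bfa)\subseteq W(\bfb)$ already cited in the introduction; likewise you make the subsequence-of-subsequence reduction explicit where the paper simply says ``after passing to a subsequence.''
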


\begin{remark}
$SW$ is not continuous in general. For example, consider the case when $\Ga=\Z$. It is possible to find a sequence of measures $\mu_i \in \sP$ such that $\Ga \cc (\Ca^\Ga,\mu_i)$ is essentially free for all $i$ but $\lim_i \mu_i = \delta_x$ is the Dirac measure on a fixed point $x \in \Ca^\Ga$. By the Rokhlin Lemma, $SW(\mu_i)=\sP$ for all $i$ and $SW(\mu_i) \ne  SW(\delta_x)$ since $SW(\delta_x)$ is the subspace of measures supported on fixed points.
\end{remark}

\begin{proof}
Since $\Es{SW}_\Ga$ is compact, after passing to a subsequence, we may assume that $\lim_i SW(\mu_i) = SW(\nu)$ for some $\nu \in \sP$. Since $\mu_\infty = \lim_i \mu_i$ it follows that $\mu_\infty \in SW(\nu)$. Thus $\mu_\infty \prec_s \nu$ and therefore $SW(\mu_\infty) \subseteq SW(\nu)$.
\end{proof}

\section{Convex integrals and couplings}\label{sec:properties}
Let $\sP^{erg}$ denote the extreme points of $\sP=\Prob_\Ga(\Ca^\Ga)$. Let $\Prob(\sP^{erg})$ denote the space of Borel probability measures on $\sP^{erg}$. Let $\uppi : \Ca^\Ga\ra \sP^{erg}$ be an ergodic decomposition map. By definition this means that $\uppi$ is a $\Ga$-invariant Borel map satisfying
\begin{itemize}
\item For each $e\in \sP^{erg}$, $e(\{ x \in \Ca^\Ga \csuchthat \uppi (x) =e \} ) =1$.
\item For each $\mu \in \sP$, $\mu = \int _{e\in \sP^{erg}} e \, d\uppi _* (\mu )$.
\end{itemize}
Furthermore, $\uppi$ is unique in the following sense: if $\uppi '$ is another such map then the set $\{ x \csuchthat \uppi (x) \neq \uppi ' (x) \}$ is $\mu$-null for all $\mu \in \sP$ \cite{MR1784210}.

Let $\uppi  _* : \sP \ra \Prob(\sP^{erg})$ be the associated affine map which takes a measure $\mu \in \sP$ to its ergodic decomposition $\uppi _* (\mu )\in \Prob(\sP^{erg})$. In what follows we will abuse notation and write $\uppi (\mu )$ for $\uppi _*(\mu )$. If $\kappa \in \Prob(\sP)$ then we let $\upbeta (\kappa )  \in \sP$ denote the Barycenter of $\kappa$. By definition,
$$\upbeta(\kappa) = \int _{\sP} \mu ~d\kappa( \mu ).$$
So $\upbeta (\uppi (\mu )) = \mu$, and if $\kappa \in \Prob(\sP^{erg})$ then $\uppi(\upbeta (\kappa )) =\kappa$.

\begin{defn}
Let $(X,\mc{A},\mu )$ and $(Y,\mc{B},\nu )$ be probability spaces. A {\bf coupling} of $\mu$ with $\nu$ is a probability measure $\rho$ on $(X\times Y,\mc{A}\otimes\mc{B})$ such that $(\mbox{proj}_X)_*\rho = \mu$ and $(\mbox{proj}_Y)_*\rho = \nu$. %The measure $\mu$ is called the {\bf left marginal} of $\rho$, and $\nu$ is called the {\bf right marginal} of $\rho$.

Let $(Z,\mc{C},\eta )$ be another probability space and let $\rho$ be a coupling of $\mu$ with $\nu$, and let $\sigma$ be a coupling of $\nu$ with $\eta$. Then the {\bf composition} of $\rho$ and $\sigma$, denoted $\rho\circ \sigma$, is the coupling of $\mu$ with $\eta$ defined by $\rho\circ \sigma = \int _Y \rho _y \times \sigma ^y \, d\nu$, where $\rho = \int _Y \rho _y \times \updelta _y \, d\nu$ and $\sigma = \int _Y \updelta _y \times \sigma ^y \, d\nu$ are the respective disintegrations of $\rho$ and $\sigma$ via the natural projection maps.
\end{defn}

%The purpose of this section is to show that the relation $\prec_s$ is preserved under convex integrals.
%
%The next lemma shows that $SW$ is semi-affine in the sense that $\int SW(f_1(x))~d\omega(x) \subseteq SW(\int f_1(x)~d\omega(x))$.
\begin{lem}\label{lem:semiaffine}
Let $\lambda$ and $\omega$ be Borel probability measures on $\sP$ and assume that there is a coupling $\rho$ of $\lambda$ with $\omega$ which concentrates on the set $\{ (\mu ,\nu ) \, : \, \mu \prec _s \nu \}$. Assume in addition that there is a $\omega$-conull set $\sP _{\omega} \subseteq \sP$ such that the measures in $\sP_{\omega}$ are mutually singular. Then $\upbeta (\lambda ) \prec _s \upbeta (\omega )$.
\end{lem}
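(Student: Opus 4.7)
The plan is to disintegrate $\rho$ over $\omega$, use convexity of each $SW(\nu)$ to obtain $\upbeta(\lambda^\nu)\in SW(\nu)$ for $\omega$-a.e.\ $\nu$, and then glue approximating factor maps along the mutually singular pieces of $\upbeta(\omega)$ to exhibit $\upbeta(\lambda)$ as a weak$^*$ limit of factor measures of $\upbeta(\omega)\times \bfi$. First, disintegrate
\[
\rho = \int_{\sP} (\lambda^\nu \otimes \updelta_\nu)\, d\omega(\nu),
\]
so that $\lambda = \int \lambda^\nu\, d\omega(\nu)$ and, because $\rho$ concentrates on $\{(\mu,\nu)\csuchthat \mu \in SW(\nu)\}$, for $\omega$-a.e.\ $\nu$ the measure $\lambda^\nu$ is supported on $SW(\nu)$. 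By Lemma~\ref{lem:stable1} each $SW(\nu)$ is closed and convex, hence its barycenter $\upbeta(\lambda^\nu) = \int \mu\, d\lambda^\nu(\mu)$ lies in $SW(\nu) = W(\nu \times \bfi)$. By Fubini, $\upbeta(\lambda) = \int \upbeta(\lambda^\nu)\, d\omega(\nu)$.

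Second, I would use the mutual singularity of $\sP_\omega$ to extract a $\Ga$-invariant Borel map $\tau : \Ca^\Ga \ra \sP$, defined on a $\upbeta(\omega)$-conull set, with $\tau_*\nu = \updelta_\nu$ for $\omega$-a.e.\ $\nu$; this is a Borel selector for the singular decomposition $\upbeta(\omega)=\int \nu\, d\omega(\nu)$, and its $\Ga$-invariance follows from $\Ga$-invariance of each $\nu$. For each $n\in \N$, select, in a $\omega$-measurable fashion in $\nu$, a $\Ga$-equivariant measurable map $\Phi_{\nu,n}:\Ca^\Ga \times [0,1] \ra \Ca^\Ga$ whose pushforward $(\Phi_{\nu,n})_*(\nu \times \textrm{Leb})$ lies within weak$^*$-distance $1/n$ of $\upbeta(\lambda^\nu)$; such maps exist because $\upbeta(\lambda^\nu) \in W(\nu \times \bfi)$, and a Jankov--von Neumann measurable selection applies once one parametrises $\Ga$-equivariant measurable maps $\Ca^\Ga \times [0,1] \ra \Ca^\Ga$ by their time-zero coordinates $\phi(x,t) = \Phi(x,t)(e_\Ga) \in \Ca$, which range over a standard Borel space. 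Define the glued map
\[
\Phi_n(x,t) := \Phi_{\tau(x),n}(x,t),
\]
which is $\Ga$-equivariant by $\Ga$-invariance of $\tau$ and equivariance of each $\Phi_{\nu,n}$.

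Because $\tau = \nu$ on the $\nu$-support, $\Phi_n$ agrees $\nu$-a.e.\ with $\Phi_{\nu,n}$, so $(\Phi_n)_*(\nu \times \textrm{Leb}) = (\Phi_{\nu,n})_*(\nu \times \textrm{Leb})$. For any $f\in C(\Ca^\Ga)$ the inner integrals are uniformly bounded by $\|f\|_\infty$, so by dominated convergence and Fubini,
\[
\int f\, d(\Phi_n)_*(\upbeta(\omega)\times \textrm{Leb}) = \int\!\!\int f\, d(\Phi_{\nu,n})_*(\nu \times \textrm{Leb})\, d\omega(\nu) \longrightarrow \int\!\!\int f\, d\upbeta(\lambda^\nu)\, d\omega(\nu) = \int f\, d\upbeta(\lambda),
\]
whence $\upbeta(\lambda) \in W(\upbeta(\omega)\times \bfi) = SW(\upbeta(\omega))$. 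Since $(\upbeta(\omega)\times \bfi)\times \bfi \cong \upbeta(\omega)\times \bfi$ as pmp actions, this implies $\upbeta(\lambda)\times \bfi \prec \upbeta(\omega)\times \bfi$, i.e., $\upbeta(\lambda) \prec_s \upbeta(\omega)$. The main obstacle is the measurable selection of $\{\Phi_{\nu,n}\}$ in a way that the glued $\Phi_n$ is genuinely $\Ga$-equivariant; both issues are resolved by the time-zero parametrisation and the $\Ga$-invariance of $\tau$.
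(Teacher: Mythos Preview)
Your overall strategy is exactly the paper's: disintegrate $\rho$ over $\omega$, use convexity of $SW(\nu)$ to get $\upbeta(\lambda^\nu)\in SW(\nu)$, write $\upbeta(\lambda)=\int\upbeta(\lambda^\nu)\,d\omega(\nu)$, and then glue $\nu$-by-$\nu$ approximations along the mutually singular pieces to land in $SW(\upbeta(\omega))$. The difference is in how the gluing is carried out, and that is where your argument has a genuine gap.

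The problem is the measurable selection of $\Phi_{\nu,n}$. You claim that parametrising $\Ga$-equivariant measurable maps $\Ca^\Ga\times[0,1]\to\Ca^\Ga$ by their time-zero coordinate $\phi:\Ca^\Ga\times[0,1]\to\Ca$ gives a standard Borel space to which Jankov--von~Neumann applies. It does not: the collection of Borel maps (with no identification) is not standard Borel, and the usual fix---pass to $L^0$ modulo a.e.\ equality---does not work here, because the relevant measure is $\nu\times\mathrm{Leb}$, which varies with $\nu$. Indeed, when $\omega$ is non-atomic one has $\nu\perp\upbeta(\omega)$ for $\omega$-a.e.\ $\nu$, so an $L^0(\upbeta(\omega)\times\mathrm{Leb})$-class carries no information $\nu\times\mathrm{Leb}$-almost everywhere. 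Thus there is no single standard Borel space in which your selection takes place, and the assertion that the glued map $\Phi_n(x,t)=\Phi_{\tau(x),n}(x,t)$ is Borel is unjustified.

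The paper circumvents both this issue and the need for a global selector $\tau$ by working at the level of partitions and exploiting that $\Ca^\Ga\times\Ca$ has only \emph{countably many} clopen $k$-partitions $\{\cQ^{(m)}\}_{m\in\N}$. Since clopen sets are dense in the measure algebra of each $\nu\times\nu_0$, for $\omega$-a.e.\ $\nu$ some $\cQ^{(m)}$ is an $\epsilon$-good approximation to $\upbeta(\lambda^\nu)$; one then takes $n(\nu)$ to be the \emph{least} such $m$, which is automatically Borel in $\nu$. The gluing is then over the countable partition $\{\nu:n(\nu)=m\}_{m}$ of $\sP_\omega$: mutual singularity of the finitely/countably many measures $\kappa_m=\int_{n(\nu)=m}\nu\times\nu_0\,d\omega$ produces a $\Ga$-invariant Borel partition $\{R_m\}$ of $\Ca^\Ga\times\Ca$ by hand, and one sets $U_i=\bigcup_m R_m\cap Q_i^{(m)}$. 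Your approach becomes rigorous if you restrict the $\Phi_{\nu,n}$ to \emph{continuous} equivariant maps (a genuine Polish space in which factor measures are still dense), at which point it is essentially the paper's argument in different clothing.
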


We note that the hypothesis on $\omega$ is automatically satisfied if $\omega$ concentrates on $\sP ^{erg}$.

\begin{proof}
Let $\rho = \int _{\sP_{\omega}} \rho ^{\nu } \times \updelta _{\nu} \, d\omega (\nu )$ be the disintegration of $\rho$ over $\omega$. Then for $\omega$-a.e.\ $\nu$, the measure $\rho ^{\nu }$ concentrates on $SW(\nu )$, hence $\upbeta (\rho ^{\nu}) \in SW(\nu )$, since $SW(\nu )$ is a closed convex set. We have $\upbeta (\lambda ) = \int \upbeta (\rho ^{\nu}) \, d\omega (\nu )$. Fix an atomless Borel probability measure $\nu_0$ on $\Ca$. Also, let $\Ga$ act on $\Ca^\Ga \times \Ca$ by
$$g(x,y) = (gx,y).$$
Then $\upbeta (\rho ^{\nu}) \prec \nu \times \nu_0$ for $\omega$-a.e.\ $\nu$. Fix a Borel partition $\cP=\{P_1,\ldots, P_k\}$ of $\Ca^\Ga$, $\epsilon>0$ and a finite subset $F \subseteq \Ga$. It suffices to show there exists a Borel partition $\{U_1,\ldots, U_k\}$ of $\Ca^\Ga \times \Ca$ such that
$$\left| \int _{\sP} \upbeta (\rho ^{\nu})(P_i \cap gP_j) - \nu \times \nu_0(U_i \cap gU_j) ~d\omega( \nu ) \right| < \epsilon$$
for every $g\in F$ and $1\le i,j \le k$.

Let $\{\cQ^{(n)}\}_{n=1}^\infty$ be an enumeration of all clopen partitions of $\Ca^\Ga \times \Ca$ of the form $\cQ^{(n)}=\{Q^{(n)}_1,\ldots, Q^{(n)}_k\}$. There are only countably many such partitions. For $\omega$-a.e.\ $\nu$, since $\upbeta (\rho ^{\nu}) \prec _s \nu \times \nu_0$, and because the clopen sets are dense in the measure algebra of $\nu \times \nu_0$, there exists some number $n(\nu ) \in \N$ such that
$$\left|  \upbeta (\rho ^{\nu }) (P_i \cap gP_j) - \nu \times \nu_0\left(Q^{(n(\nu ))}_i \cap gQ^{(n(\nu ))}_j\right) \right| < \epsilon.$$
for every $g\in F$ and $1\le i,j \le k$. We choose $n(\nu )$ to be the smallest natural number with this property. With this choice, the map $\nu \mapsto n(\nu )$ is measurable.

Let $\cM$ denote the set of all $m\in \N$ such that
$$\omega(\{ \nu \in \sP :~ n(\nu )=m\}) >0.$$
Define
$$\kappa_m = \int_{\nu :~n(\nu )=m} \nu \times \nu_0~d\omega( \nu ).$$
This is a $\Ga$-invariant Borel measure on $\Ca^\Ga\times \Ca$. Moreover, the measures $\{\kappa_m:~m \in \cM\}$ are mutually singular since the measures in the $\omega$-conull set $\sP _{\omega}$ are mutually singular. So there exists a Borel partition $\cR=\{R_m\}_{m \in \cM}$ of $\Ca^\Ga \times \Ca$ such that
$$\kappa_m(\Ca^\Ga \times \Ca) = \kappa_m(R_m)$$
and $R_m$ is $\Ga$-invariant for all $m\in \cM$. Thus for $\omega$-a.e. $\nu\in \sP$ we have
$$\nu \times \nu_0(E) = \nu \times \nu_0(E \cap R_{n(\nu )})$$
for any Borel $E \subseteq \Ca^\Ga \times \Ca$.  Let
$$U_i = \bigcup_{m\in \cM} R_m \cap Q_i^{(m)}.$$
Then $\{U_1,\ldots, U_k\}$ is a Borel partition of $\Ca^\Ga \times \Ca$ and for any $g\in F$, $1\le i, j \le k$,
\begin{eqnarray*}
&&\left| \int \upbeta (\rho ^{\nu} ) (P_i \cap gP_j) - \nu \times \nu_0(U_i \cap gU_j) ~d\omega( \nu ) \right|\\
 &\le& \int |\upbeta (\rho ^{\nu}) (P_i \cap gP_j) - \nu \times \nu_0(U_i \cap gU_j)| ~d\omega( \nu )\\
&=& \sum_{m\in \cM} \int_{\nu :~n(\nu )=m} |\upbeta (\rho ^{\nu}) (P_i \cap gP_j) - \nu \times \nu_0(U_i \cap gU_j)| ~d\omega( \nu )\\
&=& \sum_{m\in \cM} \int_{\nu :~n(\nu )=m} \left|\upbeta (\rho ^{\nu})(P_i \cap gP_j) - \nu \times \nu_0\left(Q^{(m)}_i \cap gQ^{(m)}_j\right)\right| ~d\omega( \nu )\\
&\le& \epsilon.
\end{eqnarray*}

\end{proof}

\section{Coupling Theorem}
The main theorem of this section is:

\begin{thm}\label{thm:Scoupling}[Coupling Theorem]
Let $\mu , \nu \in \sP$.
\begin{enumerate}
\item[(i)] $\mu \prec _s\nu$ if and only if there exists a coupling $\rho$ of $\uppi (\mu )$ and $\uppi (\nu )$ which concentrates on the set $\{ (e_0,e_1) \in \sP^{erg}\times \sP^{erg} \csuchthat e_0\prec _s e_1 \}$

\item[(ii)] $\mu \sim _s \nu$ if and only if there exists a coupling $\rho$ of $\uppi (\mu )$ and $\uppi (\nu )$ which concentrates on the set $\{ (e_0,e_1) \in \sP^{erg}\times \sP^{erg}\csuchthat e_0\sim _s e_1 \}$
\end{enumerate}
Moreover, if $\mu \sim _s \nu$ and $\rho$ is any coupling of $\uppi (\mu )$ and $\uppi (\nu )$ which concentrates on $\{ (e_0,e_1) \in \sP^{erg}\times \sP^{erg}\csuchthat e_0\prec _s e_1 \}$, then $\rho$ in fact concentrates on $\{ (e_0, e_1) \in \sP^{erg}\times \sP^{erg}\csuchthat e_0 \sim _s e_1 \}$.
\end{thm}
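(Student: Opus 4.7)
The plan is to handle (i) first and then reduce (ii) and the ``moreover'' to (i) combined with a preorder argument. The ``if'' direction of (i) is a direct application of Lemma \ref{lem:semiaffine} with $\lambda = \uppi(\mu)$ and $\omega = \uppi(\nu)$: since $\uppi(\nu)$ concentrates on $\sP^{erg}$, where distinct measures are mutually singular, the hypotheses are satisfied and Lemma \ref{lem:semiaffine} yields $\mu = \upbeta(\uppi(\mu)) \prec_s \upbeta(\uppi(\nu)) = \nu$.

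For the hard ``only if'' direction of (i), since $\mu \in SW(\nu) = W(\nu \times \bfi)$, I would pick equivariant maps $\Phi_n : \Ca^\Ga \times [0,1] \to \Ca^\Ga$ with $\Phi_{n*}(\nu \times \bfi) \to \mu$, and for each $e \in \sP^{erg}$ set $\mu_e^{(n)} := \Phi_{n*}(e \times \bfi) \in SW(e)$; Fubini gives $\int \mu_e^{(n)}\, d\uppi(\nu)(e) = \Phi_{n*}(\nu \times \bfi) \to \mu$. A Koml\'{o}s-type extraction, applied to the bounded scalar sequences $e \mapsto \int \phi\, d\mu_e^{(n)}$ for $\phi$ ranging over a countable weak*-dense subset of $C(\Ca^\Ga)$ and combined with diagonalization, yields a subsequence $(n_k)$ such that the Ces\`aro averages $\bar\mu_e^{(K)} := K^{-1}\sum_{k\leq K}\mu_e^{(n_k)}$ converge weak* to some $\mu_e \in \sP$ for $\uppi(\nu)$-a.e.\ $e$. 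Closedness and convexity of each $SW(e)$ force $\mu_e \in SW(e)$ a.s., and dominated convergence against continuous test functions gives $\int \mu_e\, d\uppi(\nu)(e) = \mu$. The \emph{Key Lemma} states that if $\lambda \in SW(e)$ with $e$ ergodic, then $\uppi(\lambda)$ concentrates on $\{e_0 \in \sP^{erg} : e_0 \prec_s e\}$; this is immediate from Theorem \ref{thm:stable}, which asserts that $SW(e)$ is a simplex whose extreme points are precisely the ergodic measures in $SW(e)$, i.e.\ $\{e_0 \in \sP^{erg} : e_0 \prec_s e\}$---the unique Choquet decomposition of $\lambda$ on these extreme points is an ergodic decomposition of $\lambda$, hence equals $\uppi(\lambda)$ by uniqueness of the ergodic decomposition in $\sP$. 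Setting $\rho := \int \uppi(\mu_e) \otimes \delta_e\, d\uppi(\nu)(e)$, affinity of $\uppi_*$ gives first marginal $\uppi(\mu)$, the second marginal is manifestly $\uppi(\nu)$, and the Key Lemma places $\supp\rho$ in $\{(e_0, e) : e_0 \prec_s e\}$.

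For the ``moreover'' statement, assume $\mu \sim_s \nu$ and let $\rho$ be any coupling of $\uppi(\mu), \uppi(\nu)$ on $\prec_s$-pairs. For any Borel $\prec_s$-monotone $\phi : \sP^{erg} \to \R$, monotonicity on $\supp\rho$ gives $\int \phi\, d\uppi(\mu) \leq \int \phi\, d\uppi(\nu)$; a coupling witnessing $\nu \prec_s \mu$ (produced by the hard direction of (i)) reverses this, so $\int(\phi(e_1) - \phi(e_0))\, d\rho = 0$ with a nonnegative integrand, whence $\phi(e_0) = \phi(e_1)$ $\rho$-a.s. A countable family of such $\phi$'s that separates $\sim_s$-classes is produced from a countable base $\{U_i\}$ of $\sP$ by $\phi_i(e) := \mathbb{1}\{U_i \cap SW(e) \neq \emptyset\}$: each $\phi_i$ is $\prec_s$-monotone (since $e \prec_s e' \Rightarrow SW(e) \subseteq SW(e')$), Borel (in fact lower semi-continuous, by Corollary \ref{cor:semicontinuity}), and the family separates $\sim_s$-classes since $SW(\cdot)$ is a complete invariant of $\sim_s$ and distinct closed subsets of $\sP$ are distinguished by some basic open set. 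Hence $e_0 \sim_s e_1$ $\rho$-a.s. The ``if'' of (ii) follows by applying the proven direction of (i) to $\rho$ and to $\rho^T$ (both are on $\prec_s$-pairs when $\rho$ is on $\sim_s$-pairs); the ``only if'' of (ii) follows by combining the hard direction of (i) with the ``moreover''.

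The principal obstacle is the Koml\'{o}s-type extraction producing a measurable $e \mapsto \mu_e$ with $\mu_e \in SW(e)$ for a.e.\ $e$: this relies on closedness and convexity of each individual $SW(e)$, sidestepping the fact that, because $SW$ is only lower semi-continuous, the graph $\{(\eta, e) : \eta \in SW(e)\}$ is not known to be closed in $\sP \times \sP^{erg}$.
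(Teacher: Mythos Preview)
Your proposal is correct. The ``if'' direction of (i), part (ii), and the ``moreover'' are essentially the paper's arguments: your monotone functions $\phi_i$ are exactly the indicators of the paper's sets $B_{U_i}$, and the equality $\int\phi_i\,d\uppi(\mu)=\int\phi_i\,d\uppi(\nu)$ is precisely what the paper derives (by contradiction) from the two one-sided couplings.

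The genuine difference is in the ``only if'' direction of (i). The paper passes to an ultrapower: it takes factor maps $\Phi_n$ with $(\Phi_n)_*\nu\to\mu$, forms the ultralimit $\Phi$ on $(\Ca^\Ga)_{\cU}$, and builds $\rho$ from pairs $\bigl(\lim_{n\to\cU}(\Phi_n)_*e_n,\ \lim_{n\to\cU}e_n\bigr)$. Because the ergodic component varies along the sequence, showing that $\rho$ lands on $\prec_s$-pairs is not immediate; the paper needs the technical Lemma~\ref{lem:singleErg} and a separate Claim comparing $[B_U]$ with $\mathrm{lim}_{\cU}^{-1}(B_U)$. Your route keeps the ergodic component $e$ fixed: each $\mu_e^{(n)}=\Phi_{n*}(e\times\bfi)$ already lies in the closed convex set $SW(e)$, so the Koml\'os Ces\`aro limit $\mu_e$ stays in $SW(e)$, and your Key Lemma (which is just Theorem~\ref{thm:stable}'s statement that $SW(e)$ is a face of $\sP$, equivalently Theorem~\ref{thm:weak-decomposition}(2)) immediately forces $\uppi(\mu_e)$ onto $\{e_0:e_0\prec_s e\}$. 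This is more elementary---no ultraproducts, no Lemma~\ref{lem:singleErg}---and it makes explicit that fibrewise convexity of $SW(e)$ is exactly what carries the argument (as you note, ordinary weak* subsequential limits would not suffice because the graph $\{(\eta,e):\eta\in SW(e)\}$ need not be closed). The paper's ultrapower construction, by contrast, avoids any averaging theorem and treats the non-ergodic case symmetrically without first producing a measurable selector $e\mapsto\mu_e$.
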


Before proving this, we need to investigate properties of a natural basis for the topology of $\Es{SW}_\Ga$.

\begin{defn}
To each open subset $U$ of $\sP$ we associate the sets
\begin{align*}
B_U &= \{ \rho \in \sP \csuchthat SW(\rho )\cap U\neq \emptyset \} \\
C_U &= \{ \rho \in \sP \csuchthat \uppi  (\rho ) (B_U) > 0 \} .
\end{align*}
\end{defn}

The following proposition gives some basic properties of the sets $B_U$ and $C_U$ which will be used several times below.

\begin{prop}\label{prop:facts}Let $U$ and $V$ be open subsets of $\sP$.
\begin{itemize}
\item[(i)] $C_U\cap {\sP^{erg}} = B_U \cap \sP^{erg}$.

\item[(ii)] $U\subseteq B_U$ and $U\cap {\sP^{erg}} \subseteq C_U$.

\item[(iii)] If $\mu \in B_U$ and $\mu \prec_s \nu$ then $\nu \in B_U$.

\item[(iv)] If $U\subseteq V$ then $B_U\subseteq B_V$ and $C_U\subseteq C_V$.

\item[(v)] $B_U$ is open and $C_U$ is Borel.
%
%\item[(vii)] If $\{ U_i \} _{i\in I}$ is a countable collection of open subsets then $C_{\bigcup _{i\in I}U_i} = \bigcup _{i\in I}C_{U_i}$.
\end{itemize}
\end{prop}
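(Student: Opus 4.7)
The plan is to dispatch parts (i)--(iv) essentially from the definitions together with the basic fact that $\rho \in SW(\rho)$ (which holds because the identity map $\mathcal{C}^\Ga \to \mathcal{C}^\Ga$ is an equivariant factor map, so $\rho \in \Factor(\rho) \subseteq W(\rho) \subseteq SW(\rho)$). The only genuinely nontrivial part is (v), where openness of $B_U$ is obtained from lower semi-continuity of $SW$ (Corollary \ref{cor:semicontinuity}), and Borelness of $C_U$ from measurability of the ergodic decomposition.

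Here is the order I would carry out the steps. First I would prove (ii): for any $\rho$, $\rho \in SW(\rho) \cap U$ whenever $\rho \in U$, giving $U \subseteq B_U$. Next I would prove (i): for ergodic $\rho$ the ergodic decomposition is $\uppi(\rho) = \delta_\rho$, so $\uppi(\rho)(B_U) > 0$ iff $\rho \in B_U$. Then the second half of (ii), $U \cap \sP^{erg} \subseteq C_U$, follows immediately by combining $U \cap \sP^{erg} \subseteq B_U$ with (i). Part (iii) is definitional: $\mu \prec_s \nu$ means $SW(\mu) \subseteq SW(\nu)$ (by Theorem \ref{thm:stable} / Lemma \ref{lem:stable1} as stated in the introduction), so $SW(\mu) \cap U \neq \emptyset$ forces $SW(\nu) \cap U \neq \emptyset$. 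Part (iv) is direct: $SW(\rho) \cap U \neq \emptyset$ trivially implies $SW(\rho) \cap V \neq \emptyset$ when $U \subseteq V$, and monotonicity of $\uppi(\rho)(\cdot)$ then gives $C_U \subseteq C_V$.

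For (v), to show $B_U$ is open, let $\rho \in B_U$ and pick $\mu \in SW(\rho) \cap U$. If $\rho_i \to \rho$ in $\sP$, then by Corollary \ref{cor:semicontinuity} we have $SW(\rho) \subseteq \liminf_i SW(\rho_i)$, so there exist $\mu_i \in SW(\rho_i)$ with $\mu_i \to \mu$. Since $U$ is open, $\mu_i \in U$ eventually, and so $\rho_i \in B_U$ eventually; this proves that $B_U$ is open. For Borelness of $C_U$, note that $B_U$ is in particular Borel, so $\uppi^{-1}(B_U) \subseteq \mathcal{C}^\Ga$ is Borel, and the map $\rho \mapsto \uppi(\rho)(B_U) = \rho(\uppi^{-1}(B_U))$ is a Borel function of $\rho \in \sP$; hence its preimage of $(0,1]$, which is exactly $C_U$, is Borel.

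The only delicate point is the openness of $B_U$ in (v), and its entire content is the application of lower semi-continuity of $SW$ established in Corollary \ref{cor:semicontinuity}; everything else is bookkeeping with definitions.
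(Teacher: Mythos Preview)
Your proposal is correct and follows essentially the same approach as the paper: parts (i)--(iv) are handled from the definitions, and for (v) the openness of $B_U$ is obtained from lower semi-continuity of $SW$ (Corollary~\ref{cor:semicontinuity}) while Borelness of $C_U$ comes from Borelness of $\uppi$ and $B_U$. The only cosmetic difference is that the paper proves the complement $\sP\setminus B_U$ is closed rather than proving $B_U$ open directly, but the content is identical.
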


\begin{proof}
Statements (i) through (iv) all follow from the definitions. For (v), to see $B_U$ is open it suffices to show that $\sP\setminus B_U$ is closed. Assume $\rho _n \in \sP\setminus B_U$ and $\rho _n \ra \rho \in \sP$. Then $SW(\rho _n ) \subseteq \sP\setminus U$ for all $n$, so $\liminf _n SW(\rho _n ) \subseteq \sP\setminus U$ since $\sP\setminus U$ is closed. By Lemma \ref{cor:semicontinuity}, $SW(\rho ) \subseteq \liminf _n SW(\rho _n )\subseteq \sP\setminus U$, i.e., $\rho \in \sP\setminus B_U$. The set $C_U$ is Borel since $\uppi$ and $B_U$ are both Borel.
%
%Statement (vii) is a consequence of countable subadditivity for measures.
\end{proof}

\begin{lem}\label{lem:singleErg} Let $V\subseteq \sP$ be open.
\begin{enumerate}
\item[(1)] Let $\mu \in C_V$. Then for any $e\in \sP^{erg}\setminus B_V$ there exists a neighborhood $U$ of $\mu$ with $e\not\in B_U$.

\item[(2)] Let $L\subseteq C_V$ be compact, and let $\nu \in \sP$. Then for any $\epsilon >0$ there exists an open set $U\subseteq \sP$ with $L\subseteq U$ and $\uppi (\nu )(B_U\setminus B_V)< \epsilon$.

\item[(3)] Let $\lambda$ be a Borel probability measure on $\sP$, and let $\nu\in \sP$. Then for any $\epsilon >0$ there exists an open set $U\subseteq \sP$ with $\lambda (C_V\setminus U) = 0$ and $\uppi (\nu )(B_U\setminus B_V)<\epsilon$.
\end{enumerate}
\end{lem}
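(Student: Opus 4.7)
The plan is to prove part (1) as the geometric heart and then deduce (2) and (3) from it by, respectively, a shrinking-neighborhood argument and inner regularity.

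For part (1), I would reduce to showing $\mu \notin SW(e)$: since $SW(e)$ is closed in $\sP$, any open $U\ni \mu$ disjoint from $SW(e)$ then witnesses $e\notin B_U$. Suppose for contradiction that $\mu \in SW(e)$. By Theorem \ref{thm:stable}, $SW(e)$ is a closed convex subset of the Choquet simplex $\sP$ whose extreme points are extreme in $\sP$, hence ergodic. Applying Choquet's theorem inside $SW(e)$ produces a representing measure for $\mu$ supported on $SW(e)\cap \sP^{erg}$, which is simultaneously a representing measure for $\mu$ in the ambient simplex $\sP$. By uniqueness of the ergodic decomposition in $\sP$, this representing measure coincides with $\uppi (\mu )$, so $\uppi (\mu )$ concentrates on $SW(e)\cap \sP^{erg}$. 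Every $e_0\in SW(e)\cap \sP^{erg}$ satisfies $e_0 \prec _s e$, and combined with $e\notin B_V$ and the upward-closedness of $B_V$ under $\prec _s$ (Proposition \ref{prop:facts}(iii)), this forces $e_0 \notin B_V$. Hence $\uppi (\mu )(B_V)=0$, contradicting $\mu \in C_V$.

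For part (2), I would fix a compatible metric $d$ on $\sP$ and take $U_n:=\{\rho \in \sP \csuchthat d(\rho ,L)<1/n\}$, a decreasing sequence of open neighborhoods of $L$ with $\bigcap _n U_n = L$. The corresponding $B_{U_n}$ are decreasing open sets, and using compactness of $SW(\rho )$ and $L$ one verifies that $\bigcap _n B_{U_n} = B_L := \{\rho \csuchthat SW(\rho )\cap L \neq \emptyset \}$. By part (1), for every $e\in \sP^{erg}\setminus B_V$ and every $\mu \in L \subseteq C_V$ one has $\mu \notin SW(e)$, so $SW(e)\cap L = \emptyset$, i.e., $e \notin B_L$; thus $(B_L\setminus B_V)\cap \sP^{erg}=\emptyset$. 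Since $\uppi (\nu )$ is supported on $\sP^{erg}$, $\uppi (\nu )(B_L\setminus B_V)=0$, and continuity of measure along the decreasing sequence $B_{U_n}\setminus B_V$ yields $\uppi (\nu )(B_{U_n}\setminus B_V)\ra 0$. Setting $U=U_n$ for $n$ sufficiently large then works.

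For part (3), $C_V$ is Borel by Proposition \ref{prop:facts}(v) and $\sP$ is compact metrizable, so inner regularity of $\lambda$ yields an increasing sequence of compact sets $L_n \subseteq C_V$ with $\lambda (C_V \setminus \bigcup _n L_n )=0$. I would apply part (2) to each $L_n$ with tolerance $\epsilon /2^{n+1}$ to obtain an open $U_n \supseteq L_n$ with $\uppi (\nu )(B_{U_n}\setminus B_V)<\epsilon /2^{n+1}$, and set $U:=\bigcup _n U_n$. Then $U$ is open, $\bigcup _n L_n \subseteq U$ gives $\lambda (C_V \setminus U)=0$, and the identity $B_U = \bigcup _n B_{U_n}$ combined with countable subadditivity yields $\uppi (\nu )(B_U\setminus B_V)\le \sum _n \epsilon /2^{n+1} < \epsilon$. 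The main obstacle is part (1): it relies on the subsimplex structure of $SW(e)$ from Theorem \ref{thm:stable} together with uniqueness of the ergodic decomposition in order to convert the stable containment $\mu \prec _s e$ into a distributional statement about where the ergodic components of $\mu$ live; once that is in hand, parts (2) and (3) reduce to standard topology and measure theory.
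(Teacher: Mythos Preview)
Your argument is correct in all three parts. Parts (1) and (3) match the paper almost exactly: for (1) you reduce to $\mu\in SW(e)$ and derive that $\uppi(\mu)$ concentrates on $\{e'\in\sP^{erg}: e'\prec_s e\}$, then contradict $\mu\in C_V$ via upward-closedness of $B_V$; the paper reaches the same conclusion by invoking Theorem~\ref{thm:weak-decomposition}(2) directly rather than passing through the subsimplex statement of Theorem~\ref{thm:stable} plus Choquet uniqueness, but the logical content is identical. For (3) both you and the paper use inner regularity, apply (2) with geometric tolerances, and use $B_{\bigcup_n U_n}=\bigcup_n B_{U_n}$.

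Part (2) is where you genuinely diverge. The paper fixes a countable basis $\{O_n\}$, enumerates finite unions $\{U_n\}$, shows via (1) and compactness of $L$ that every $e\in\sP^{erg}\setminus B_V$ admits some $U_{n(e)}\supseteq L$ with $e\notin B_{U_{n(e)}}$, and then intersects finitely many such $U_n$ once $n(e)<N$ captures most of the $\uppi(\nu)$-mass of $\sP^{erg}\setminus B_V$. Your approach is more direct: the metric shrinking $U_n=\{\rho:d(\rho,L)<1/n\}$ gives $\bigcap_n B_{U_n}=B_L$ by compactness of $SW(\rho)$, part (1) shows $B_L\cap\sP^{erg}\subseteq B_V$, and monotone continuity of $\uppi(\nu)$ finishes. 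Your route avoids the bookkeeping of the index function $n(e)$ and the finite-intersection step, at the cost of verifying the identity $\bigcap_n B_{U_n}=B_L$; the paper's route stays entirely within a countable basis and makes the finiteness of the covering explicit. Both are short, but yours is arguably cleaner.
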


\begin{proof}
(1): Assume toward a contradiction that there is some $e\in \sP^{erg}\setminus B_V$ such that for all open neighborhoods $U$ of $\mu$ we have $e\in B_U$, i.e., $SW(e)\cap U\neq \emptyset$. This means that $\mu \in SW(e)$, so that $\mu \prec _s e$ and therefore
\begin{equation}\label{eqn:combine}
\uppi  (\mu ) (\{ e' \in \sP^{erg}\csuchthat e'\prec _s e \} )=1
\end{equation}
by Theorem \ref{thm:weak-decomposition} (2). From (\ref{eqn:combine}) and the hypothesis $\mu \in C_V$ we conclude that there is some $e' \in \sP^{erg}\cap B_V$ with $e' \prec _s e$. Therefore, by Proposition \ref{prop:facts}, $e\in B_V$, a contradiction.

(2): Fix $\epsilon >0$. Let $\{ O_n \} _{n\in \N}$ be a countable basis of open subsets of $\sP$ and let $\{ U_n \} _{n\in \N}$ enumerate all finite unions of elements of $\{ O_n \} _{n\in \N}$.

\begin{claim}
Let $e \in \sP^{erg}\setminus B_V$. Then there exists some $n\in \N$ such that $L\subseteq U_n$ and $e\not\in B_{U_n}$.
\end{claim}

\begin{proof}[Proof of Claim]
By part (1), for each $\mu \in L$ there is some $n(\mu )\in \N$ such that $\mu \in O_{n(\mu )}$ and $e\not\in B_{O_{n(\mu )}}$. Then $L \subseteq \bigcup _{\mu \in L} O_{n(\mu )}$, and since $L$ is compact there exists some finite $Q\subseteq L$ such that $L\subseteq \bigcup _{\mu \in Q}O_{n(\mu )}$. Taking any $n\in \N$ with $U_n = \bigcup _{\mu \in Q}O_{n(\mu )}$ works since  $e\not\in \bigcup _{\mu \in Q}B_{O_{n(\mu )}} = B_{U_n}$. \qedhere[Claim]
\end{proof}

For each $e\in \sP^{erg}\setminus B_V$ let $n(e) = \min \{ n \in \N \csuchthat L\subseteq U_{n} \mbox{ and } e\not\in B_{U_n} \}$. Let $N$ be so large that
\[
\uppi (\nu ) (\{ e \in \sP^{erg}\setminus B_V \csuchthat n(e) < N \} ) > \uppi (\nu ) (\sP^{erg}\setminus B_V) -\epsilon ,
\]
and define $U= \bigcap \{ U_{n} \csuchthat n<N \mbox{ and }L \subseteq U_n \}$. Then $U$ is open and $L\subseteq U$. Furthermore, $\sP^{erg}\cap B_U\setminus B_V \subseteq \{ e \in \sP^{erg}\setminus B_V \csuchthat n(e) \geq N \}$ since if $e\not\in B_V$ is such that $n(e)<N$, then $U\subseteq U_{n(e)}$ and therefore $e\not\in B_U$ (since $e\not\in B_{U_{n(e)}}$). This shows that $\uppi (\nu )(B_U\setminus B_V) <\epsilon$.

(3): The measure $\lambda$ is regular, so we may find a sequence $L_1, L_2,\dots$, of compact subsets of $C_V$ with $\lambda (C_V\setminus L_n )\rightarrow 0$. For each $n$ apply (2) to find an open $U_n$ with $L_n\subseteq U_n$ and $\uppi (\nu )(B_{U_n}\setminus B_V)< \epsilon /2^n$. Let $U=\bigcup _n U_n$. Then $\lambda (C_V\setminus U) = 0$, and $B_U = \bigcup _n B_{U_n}$, hence $\uppi (\nu )(B_U\setminus B_V)< \epsilon$.
\end{proof}

\subsection{Ultrapowers of measure spaces}
Let $\cU$ denote a nonprincipal ultrafilter on $\N$ and $(X,\mu)$ be a standard Borel probability space. Define an equivalence relation $\sim_\cU$ on $X^\N$ by $\{x_i\} \sim_\cU \{y_i\}$ if and only if $\{ n\in \N \csuchthat x_n=y_n \} \in \mc{U}$. Let $X_\cU:=X^\N/\sim_\cU$ denote the set of all $\sim_\cU$ equivalence classes. If $\{B_n\}$ is a sequence of subsets of $X$ then we let $[B_n] \subseteq X_\cU$ denote the set of all equivalence classes of the form $[x_n]$ with $\{n \in \N:~x_n \in B_n\}\in \mc{U}$. For each Borel $B\subseteq X$ we also let $[B]\subseteq X_{\mc{U}}$ denote the set $[B]:= \{ [x_n] \csuchthat \{ n \csuchthat x_n \in B \} \in \mc{U} \}$ corresponding to the constant sequence.

If $B_n \subseteq X$ is a sequence of Borel sets then we define $\mu_\cU([B_n]) := \lim_{n\to\cU} \mu(B_n)$. This function extends in a unique way to a probability measure, still denoted $\mu _{\cU}$, on the sigma-algebra $\mc{B}(X_{\cU})$ generated by all sets of the form $[B_n]$ where each $B_n \subseteq X$ is Borel. We let $\sigma (\mu _{\cU})$ denote the completion of $\mc{B}(X_{\cU})$ with respect to $\mu _{\cU}$. Thus $(X_\cU , \mu_\cU )$ (equipped with the sigma algebra $\sigma (\mu _{\cU} )$) is a probability space called the {\bf ultrapower} of $(X,\mu)$. In general, it is not standard because the corresponding measure algebra need not be separable. See \cite{CKT13} for more details on $(X_\cU , \mu_\cU )$.

There is a natural measure algebra embedding $I: \mathrm{MALG}_{\mu} \hookrightarrow \mathrm{MALG}_{\mu_{\cU}}$ given by $B^\mu\mapsto [B]^{\mu _{\cU}}$. The map $I$ preserves the algebra structure and it is continuous, hence it also preserves the $\sigma$-algebra structure. If we assume that $X$ is a compact Polish space, then the following proposition shows that the limit map $[x_n] \mapsto \lim _{n\rightarrow \cU}x_n$, gives a natural point realization of the embedding $I$.

\begin{prop}\label{prop:limU}
\begin{itemize}
\item[(1)] Let $K$ be a compact Polish space. Let $\varphi _n : X\rightarrow K$, $n\in \N$, be a sequence of Borel functions from $X$ to $K$. Then the function $\varphi : X_{\mc{U}}\rightarrow K$ given by $\varphi ([x_n]) = \lim _{n\rightarrow \mc{U}}\varphi _n (x_n)$ is measurable.

\item[(2)] Assume that $X$ is a compact Polish space. Then the map $\mathrm{lim} _{\cU} : X_{\cU}\rightarrow X$, defined by $\mathrm{lim} _{\cU}([x_n]) = \lim _{n\rightarrow \cU}x_n$, is measurable, and for each Borel $B\subseteq X$ we have $\mu _{\cU} ( \mathrm{lim} _{\cU}^{-1}(B) \triangle [B] ) = 0$. In particular, $\mathrm{lim}_{\cU} : (X_{\cU}, \mu _{\cU}) \rightarrow (X,\mu )$ is measure preserving.
\end{itemize}
\end{prop}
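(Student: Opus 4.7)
The plan is to prove both parts by reducing to the case of closed target sets and then extending by a $\sigma$-algebra argument.

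For part (1), I fix a compatible metric $d$ on $K$ and a closed set $C \subseteq K$. The key observation is that for any sequence $(y_n)$ in $K$, one has $\lim_{n\to\cU} y_n \in C$ if and only if $\{ n \csuchthat d(y_n, C) < 1/k \} \in \cU$ for every $k \geq 1$: the forward direction follows directly from the definition of an ultralimit, and the reverse follows by intersecting this set with $\{ n \csuchthat d(y_n, \lim_{n\to\cU} y_n) < 1/k \} \in \cU$ to produce witnesses $n$ forcing $d(\lim_{n\to\cU} y_n, C) < 2/k$. Applying this with $y_n = \varphi_n(x_n)$ expresses $\varphi^{-1}(C)$ as the countable intersection $\bigcap_k [A_{n,k}]$ where $A_{n,k} := \{ x \in X \csuchthat d(\varphi_n(x), C) < 1/k \}$ is Borel, so $\varphi^{-1}(C) \in \mc{B}(X_{\cU})$. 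Since closed sets generate $\mc{B}(K)$, $\varphi$ is measurable.

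The measurability of $\mathrm{lim}_{\cU}$ in part (2) is then the special case $\varphi_n = \mathrm{id}_X$. For the symmetric-difference claim I first handle closed $B \subseteq X$ directly: setting $U_k = \{ x \csuchthat d(x,B) < 1/k \}$, the same characterization gives $\mathrm{lim}_{\cU}^{-1}(B) = \bigcap_k [U_k]$, and clearly $[B] \subseteq [U_k]$ for every $k$. Continuity of $\mu$ from above (using $B = \bigcap_k U_k$) then yields $\mu_{\cU}(\bigcap_k [U_k]) = \lim_k \mu(U_k) = \mu(B) = \mu_{\cU}([B])$; since one set contains the other and they have equal measure, $\mu_{\cU}(\mathrm{lim}_{\cU}^{-1}(B) \vartriangle [B]) = 0$.

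To pass from closed to arbitrary Borel sets I invoke Dynkin's $\pi$--$\lambda$ theorem. Let $\mc{D}$ be the family of Borel $B \subseteq X$ for which $\mu_{\cU}(\mathrm{lim}_{\cU}^{-1}(B) \vartriangle [B]) = 0$. The closed sets form a $\pi$-system contained in $\mc{D}$, so it suffices to check that $\mc{D}$ is a $\lambda$-system. Closure under complements is immediate from $[B^c] = X_{\cU} \setminus [B]$ (using that $\cU$ is an ultrafilter) and $\mathrm{lim}_{\cU}^{-1}(B^c) = X_{\cU} \setminus \mathrm{lim}_{\cU}^{-1}(B)$. Once $\mc{D} = \mc{B}(X)$, the measure-preservation statement drops out immediately from $\mu_{\cU}([B]) = \mu(B)$.

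The main subtle point, and the step I expect to be the chief obstacle, is closure of $\mc{D}$ under countable disjoint unions $B = \bigsqcup_i B_i$. While $\mathrm{lim}_{\cU}^{-1}(B) = \bigsqcup_i \mathrm{lim}_{\cU}^{-1}(B_i)$ holds on the nose, the analogous identity for $[\cdot]$ fails: only $\bigsqcup_i [B_i] \subseteq [B]$ is automatic, and this inclusion can be strict. The resolution is to invoke $\sigma$-additivity: $\mu_{\cU}([B]) = \mu(B) = \sum_i \mu(B_i) = \sum_i \mu_{\cU}([B_i]) = \mu_{\cU}\bigl(\bigsqcup_i [B_i]\bigr)$, so $[B]$ and $\bigsqcup_i [B_i]$ coincide modulo $\mu_{\cU}$-null sets. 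The triangle inequality for symmetric differences, combined with $\sum_i \mu_{\cU}(\mathrm{lim}_{\cU}^{-1}(B_i) \vartriangle [B_i]) = 0$, then closes the step.
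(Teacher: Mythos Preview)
Your proof is correct and follows essentially the same strategy as the paper's: the paper approximates an \emph{open} set $V$ from inside by $V_m=\{k:d(k,K\setminus V)>1/m\}$ to obtain $\varphi^{-1}(V)=\bigcup_m [\varphi_n^{-1}(V_m)]$, whereas you approximate a \emph{closed} set $C$ from outside by $U_k=\{x:d(x,C)<1/k\}$ to get $\varphi^{-1}(C)=\bigcap_k [A_{n,k}]$; these are dual computations. For part (2), the paper packages the extension step by noting that both $B\mapsto \lim_{\cU}^{-1}(B)^{\mu_{\cU}}$ and $B\mapsto [B]^{\mu_{\cU}}$ are $\sigma$-algebra homomorphisms into $\mathrm{MALG}_{\mu_{\cU}}$ (the latter because the measure-algebra embedding $I$ is continuous), so the set of $B$ on which they agree is automatically a $\sigma$-algebra containing the open sets---your Dynkin $\pi$--$\lambda$ argument with the explicit measure computation $\mu_{\cU}([B])=\sum_i\mu_{\cU}([B_i])$ is exactly the by-hand verification of this fact.
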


\begin{proof}
For (1), let $d$ be a compatible metric on $K$ and fix an open set $V\subseteq K$. Since $V$ is open we have $V = \bigcup _m V_m$ where $V_m = \{ k\in V \csuchthat d(k , K\setminus V ) > 1/m \}$. We then have the equality $\varphi ^{-1}(V) = [\varphi _n^{-1}(V_1)]\cup [\varphi _n ^{-1}(V_2)]\cup\cdots$, which shows $\varphi$ is measurable. %Indeed, if $(x_n) \in \varphi ^{-1}(V)$, then $k:= \lim _{n\rightarrow \mc{U}} \varphi _n(x_n) \in V$, so $k\in V_m$ for some $m\geq 1$, and hence $\{ n \, : \, \varphi _n(x_n)\in V_m \} \in \mc{U}$, i.e., $[x_n] \in [\varphi _n^{-1}(V_m)]$. Conversely, if there is some $m\geq 1$ such that $[x_n]\in [\varphi _n^{-1}(V_m)]$ then $\{ n \, : \, d(\varphi _n (x_n ), K\setminus V) > 1/m \} \in \mc{U}$, hence $d ( \lim _{n\rightarrow \mc{U}}\varphi _n (x_n), K\setminus V )\geq 1/m > 0$, so $\lim _{n\rightarrow \mc{U}}\varphi _n (x_n) \in V$, i.e., $[x_n]\in \varphi ^{-1}(V)$.
Statement (2) corresponds to the case $X=K$, $\varphi _n = \mathrm{id}_X$ for all $n$, and $\varphi = \mathrm{lim} _{\mc{U}}$. In this case, using the notation from above, the sequence $[V_1], [V_2],\dots$ increases to $\mathrm{lim}_{\cU}^{-1}(V)$, and thus $I(V_m^{\mu}) \rightarrow \mathrm{lim} _{\cU} ^{-1}(V)^{\mu _{\cU}}$ in $\mathrm{MALG}_{\mu _{\cU}}$. But also $I(V_m ^{\mu })\rightarrow I(V^{\mu})$ by continuity of $I$, hence $I(V^{\mu})= \mathrm{lim} _{\cU} ^{-1}(V)^{\mu _{\cU}}$. Thus, the collection $\mathcal{B}$, of all Borel subsets $B\subseteq X$ satisfying $\lim _{\cU}^{-1}(B)^{\mu _{\cU}} = I(B^{\mu})$, contains all open subsets of $X$, and it is also a $\sigma$-algebra since the maps $B\mapsto \lim _{\cU}^{-1}(B)^{\mu _{\cU}}$ and $B\mapsto I(B^{\mu})$ both preserve $\sigma$-algebra operations. This shows $\mathcal{B}$ contains every Borel set, and completes the proof of (2).
\end{proof}

\subsection{Proof of the Coupling Theorem}

\begin{proof}[Proof of Theorem \ref{thm:Scoupling}]
(i): Assume first that there exists a coupling $\rho$ of $\uppi (\mu )$ and $\uppi (\nu )$ as in (i). Then the disintegration of $\rho$ with respect to the right projection map $(e_0,e_1)\mapsto e_1$ is of the form $\rho = \int \rho ^e \times \updelta _e \, d\uppi (\nu )(e)$. For $\uppi (\nu )$-almost every $e\in \sP^{erg}$ the measure $\rho ^e$ concentrates on $\{ e' \csuchthat e'\prec _s e \}$. Since $SW(e)$ is convex (by Theorem \ref{thm:stable}),  $\upbeta (\rho ^e) \prec _s e$. By Lemma \ref{lem:semiaffine},  $\mu = \upbeta (\uppi (\mu ) ) = \int  \upbeta (\rho ^e ) \, d\uppi (\nu )(e) \prec _s \int  e \, d\uppi (\nu )(e)= \nu$.

Now assume that $\mu \prec _s \nu$. Let $\nu ' \in \sP$ be such that $\Ga \cc ( \Ca ^{\Ga}, \nu ' )$ is isomorphic to the product of $\Ga \cc ( \Ca ^{\Ga}, \nu )$ with the identity action of $\Ga$ on $([0,1],\mathrm{Leb})$. Then there is a coupling $\sigma$ of $\uppi (\nu ')$ and $\uppi (\nu )$ which concentrates on pairs of isomorphic ergodic components. If we can find a coupling $\rho '$ of $\uppi (\mu )$ and $\uppi (\nu ')$ which concentrates on pairs $(e_0,e_1)$ with $e_0\prec e_1$, then the composition $\rho = \rho '\circ \sigma$ will be the desired coupling of $\uppi(\mu)$ with $\uppi(\nu)$. Therefore, after replacing $\nu$ by $\nu '$ if necessary, we may assume without loss of generality that $SW(\nu ) = W(\nu )$ so that in fact $\mu \prec \nu$.

Fix a non-principal ultrafilter $\mc{U}$ on $\N$. Let $(\Ca^\Ga_\cU, \nu _{\mc{U}})$ denote the ultrapower of $(\Ca^\Ga,\nu)$.  As $\uppi (\nu )$ is a measure on $\sP$ which concentrates on $\sP^{erg}$, the ultrapower $\uppi (\nu )_{\mc{U}}$ is a measure on the space ${\sP}_{\mc{U}}$ which concentrates on the set $[\sP^{erg} ]$ which we identify with $\sP^{erg}_{\mc{U}}$. By Proposition \ref{prop:limU}, we have that $\uppi (\nu ) = \mathrm{lim}_{\mc{U}}{}_*\uppi (\nu )_{\mc{U}}$. In particular, for $\uppi (\nu )_{\mc{U}}$-almost every $[e_n]\in \sP^{erg}_{\mc{U}}$, we have $\lim _{n\rightarrow \mc{U}}e_n \in \sP^{erg}$. For each $[e_n]\in \sP^{erg}_{\mc{U}}$ we let $\prod _{\mc{U}}[e_n]$ denote the measure on $\Ca^\Ga_\cU$ determined by $\prod _{\mc{U}}[e_n]([A_n])=\lim _{n\ra \mc{U}}e_n(A_n)$ for $A_n\subseteq \Ca^\Ga$ Borel. %Let $\uppi _{\mc{U}}: \Ca^\Ga_\cU\ra \sP^{erg}_{\mc{U}}$ be the map $\uppi _{\mc{U}}([x_n])= [\uppi (x_n)]$. Then $(\uppi _{\mc{U}})_*(\nu _{\mc{U}})= \uppi (\nu )_{\mc{U}}$. %

\indent Since $\mu \prec \nu$ there exist Borel factor maps $\Phi _n :\Ca^\Ga \ra \Ca^\Ga$ with $(\Phi _n)_*\nu \ra \mu$. Let $\Phi : (\Ca^\Ga )_{\mc{U}} \ra \Ca^\Ga$ be the ultralimit function given by $\Phi ([x_n])=\lim _{n\ra \mc{U}} \Phi _n (x_n)$. By \cite[Proposition 3.11]{T-D12} we have $\Phi _*(\nu _{\mc{U}}) = \lim _{n\ra \mc{U}}(\Phi _n)_*\nu = \mu$ and $\Phi _* \prod _{\mc{U}}[e_n] = \lim _{n\ra \mc{U}}(\Phi _n)_*e_n$ for every $[e_n] \in \sP^{erg}_{\mc{U}}$. The map $[e_n]\mapsto \Phi _* \prod_{\mc{U}}[e_n]= \lim _{n\ra \mc{U}}(\Phi _n)_*e_n$, from $\sP ^{erg}_{\mc{U}}$ to $\sP$, is therefore measurable by Proposition \ref{prop:limU}. By \cite[Proposition A.1]{T-D12}, the decomposition $\nu = \int  e \, d\uppi (\nu )(e)$ yields $\nu _{\mc{U}} = \int \textstyle{\prod _{\mc{U}}}[e_n] \, d\uppi (\nu )_{\mc{U}} ({[e_n]} )$, and hence
\begin{equation}\label{eqn:disint}
\mu = \Phi _* (\nu _{\mc{U}}) = \int \lim _{n\ra \mc{U}}(\Phi _n)_*e_n \, d\uppi (\nu )_{\mc{U}}([e_n]) .
\end{equation}
\begin{comment} Along with (\ref{eqn:disint}) this shows that
\begin{align}
\mu &= \Phi _* (\nu _{\mc{U}}) = \int _{[e_n] }\Phi _* [e_n] \, d\uppi (\nu )_{\mc{U}} \nonumber \\
&= \int _{[e_n]} \lim _{n\ra \mc{U}}(\Phi _n)_*e_n \, d\uppi (\nu )_{\mc{U}} \nonumber \\
\label{eqn:baryc} \uppi (\mu ) &= \int _{[e_n]}\uppi \big( \lim _{n\ra \mc{U}}(\Phi _n)_*e_n \big) \, d\uppi (\nu )_{\mc{U}} .
\end{align}
\end{comment}
Let $\rho$ be the measure on $\sP \times \sP$ defined by
\begin{align}
\rho &= \label{eqn:rhoU} \int  \uppi \left( \lim _{n\ra \mc{U}}(\Phi _n)_*e_n \right)\times \updelta _{\lim _{n\rightarrow\mc{U}}e_n} \ d\uppi (\nu )_{\mc{U}}([e_n]).
\end{align}
Then $\rho$ concentrates on $\sP ^{erg} \times \sP ^{erg}$, and \eqref{eqn:disint} and Proposition \ref{prop:limU} show that $\rho$ is a coupling of $\uppi (\mu )$ and $\uppi (\nu )$.

\begin{claim} \label{claim:keyclaim} Let $V\subseteq \sP$ be open. Then $\rho (B_V\times (\sP^{erg}\setminus B_V)) = 0$.
\end{claim}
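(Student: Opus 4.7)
The plan is to show that for $\uppi(\nu)_\cU$-almost every $[e_n]\in\sP^{erg}_\cU$, the measure $\mu_\cU:=\lim_{n\to\cU}(\Phi_n)_*e_n$ is stably weakly contained in $e:=\lim_{n\to\cU}e_n$; equivalently $\mu_\cU\in SW(e)=W(e\times\bfi)$. Granted this, the claim follows quickly: for any ergodic component $\alpha$ of $\mu_\cU$ one has $\alpha\prec\mu_\cU\prec e\times\bfi$, so Theorem~\ref{thm:weak-decomposition}(1) applied with $\bfa=\alpha$ and $\bfb=e\times\bfi$ yields that $\alpha$ is weakly contained in almost every ergodic component of $e\times\bfi$; since the ergodic decomposition of $e\times\bfi$ is $\int_0^1 (e\times\updelta_t)\,dt$ with each component $\Ga$-isomorphic to $e$, this gives $\alpha\prec e$. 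If in addition $e\notin B_V$, the contrapositive of Proposition~\ref{prop:facts}(iii) forces $\alpha\notin B_V$, so $\uppi(\mu_\cU)(B_V)=0$. Integrating the defining formula for $\rho$ over the set $\{[e_n] : e\notin B_V\}$ then yields $\rho(B_V\times(\sP^{erg}\setminus B_V))=0$.

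The heart of the matter is the inclusion $\mu_\cU\in W(e\times\bfi)$ for almost every $[e_n]$. Given a finite Borel partition $\cP=\{P_1,\dots,P_k\}$ of $\Ca^\Ga$, a finite $F\subseteq\Ga$, and $\epsilon>0$, the task is to produce a partition $\cQ=\{Q_1,\dots,Q_k\}$ of $\Ca^\Ga\times[0,1]$ with $|(e\times\bfi)(Q_i\cap gQ_j)-\mu_\cU(P_i\cap gP_j)|<\epsilon$ for all $i,j$ and $g\in F$. The $\Ga$-equivariance of $\Phi_n$ gives $\mu_\cU(P_i\cap gP_j)=\lim_{n\to\cU}e_n(B_i^{(n)}\cap gB_j^{(n)})$ with $B_i^{(n)}=\Phi_n^{-1}(P_i)$. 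One approximates each $B_i^{(n)}$ in $e_n$-measure by a clopen set drawn from a fixed countable base of $\Ca^\Ga$, and then uses the $[0,1]$-factor of $e\times\bfi$ to encode the (possibly $n$-dependent) choice of approximating clopen set; the weak$^*$ convergence $e_n\to e$ on each individual clopen set then allows the $\cU$-limit of $e_n$-joint distributions to be realised as the $e\times\bfi$-joint distribution of a single partition $\cQ$.

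The main technical obstacle is carrying out this partition-transfer step. Since each $\Phi_n$ is merely Borel (not continuous), the preimages $B_i^{(n)}$ need not be clopen, and their best clopen approximants genuinely depend on $n$; it is exactly this $n$-dependence that forces the introduction of the auxiliary $[0,1]$-factor, and that makes stable weak containment $\prec_s$ (rather than $\prec$) the correct notion at which to aim.
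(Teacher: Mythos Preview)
Your reduction in the first paragraph is correct: showing $\mu_\cU := \lim_{n\to\cU}(\Phi_n)_*e_n \prec_s e := \lim_{n\to\cU} e_n$ for $\uppi(\nu)_\cU$-a.e.\ $[e_n]$ would indeed yield the claim, and is in fact equivalent to what the paper ultimately proves after applying Claim~\ref{claim:keyclaim} to a countable basis.

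The argument in your second paragraph, however, has a genuine gap. Weak$^*$ convergence $e_n \to e$ only controls $e_n(C)$ for \emph{fixed} clopen $C$, whereas your clopen approximants $C_i^{(n)}$ to $B_i^{(n)} = \Phi_n^{-1}(P_i)$ necessarily vary with $n$ (the approximation is in $e_n$-measure). Since there are countably many clopen partitions and $\cU$ is only finitely additive, there is no reason the choice $m(n)$ should be constant along $\cU$. Your proposal to ``encode the $n$-dependent choice'' in the $[0,1]$-factor does not help: after taking the $\cU$-limit there is no $n$ left to encode, and any partition of $[0,1]$ indexed by clopen partitions merely produces a convex combination $\sum_m |I_m|\, e(Q_i^{(m)} \cap gQ_j^{(m)})$, with no mechanism tying the weights $|I_m|$ to the target values $\lim_{n\to\cU} e_n(C_i^{(n)} \cap gC_j^{(n)})$.

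More decisively, your sketch makes no use of the ``almost every'' quantifier over $[e_n]$; it would, if valid, apply to \emph{every} sequence $e_n \to e$ in $\sP^{erg}$ and yield $\liminf_{n\to\cU} W(e_n) \subseteq SW(e)$. For amenable $\Ga$ (where $W=SW$ on ergodic actions by Lemma~\ref{lem:thing1}) this, combined with lower semi-continuity, would force $SW$ to be continuous along all such sequences --- contradicting the Remark following Corollary~\ref{cor:semicontinuity}. So the sketch cannot be completed as written.

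The paper's route is quite different. It argues by contradiction: assuming $\uppi(\nu)_\cU(D_0)>0$ for the set $D_0$ of bad $[e_n]$, it pushes $\uppi(\nu)_\cU$ forward under $[e_n]\mapsto \lim_\cU (\Phi_n)_*e_n$ to a measure $\lambda$ on $\sP$, and applies Lemma~\ref{lem:singleErg}(3) to obtain an open $U$ with $\lambda(C_V\setminus U)=0$ and $\uppi(\nu)(B_U\setminus B_V)<\uppi(\nu)_\cU(D_0)$. Because $U$ is open and contains $\lim_\cU(\Phi_n)_*e_n$, one gets $(\Phi_n)_*e_n\in U\subseteq SW(e_n)\cap U$ for $\cU$-many $n$, hence $e_n\in B_U$ for $\cU$-many $n$, i.e., $[e_n]\in[B_U]$; the measure bound on $B_U\setminus B_V$ then yields a contradiction. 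The essential point is that Lemma~\ref{lem:singleErg} --- which ultimately rests on Theorem~\ref{thm:weak-decomposition}(2) --- lets one replace the uncontrollable clopen approximants by a single open set $U$ whose associated $B_U$ is only slightly larger than $B_V$. This is precisely the global, measure-theoretic input that your pointwise partition-transfer strategy lacks.
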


\begin{proof}[Proof of Claim]
Suppose not. Then the expression (\ref{eqn:rhoU}) implies that $\uppi (\nu )_{\mc{U}}(D_0)>0$, where
\[
D_0 = \left\{ [e_n] \in \sP^{erg}_{\mc{U}}\csuchthat \lim _{n\ra \mc{U}} (\Phi _n)_*e_n\in C_V \mbox{ and }\lim _{n\ra\mc{U}}e_n \not\in B_V \right\} .
\]
Let $\lambda$ denote the push-forward of $\uppi (\nu )_{\mc{U}}$ under the map $[e_n]\mapsto \lim _{n\ra \mc{U}}(\Phi _n)_*e_n$, so that $\lambda$ is a Borel probability measure on $\sP$. By Lemma \ref{lem:singleErg}.(3) we may find an open set $U\subseteq \sP$ such that $\lambda (C_V\setminus U ) = 0$ and $\uppi (\nu )(B_U\setminus B_V) < \uppi (\nu )_{\mc{U}} (D_0)$. Thus, for $\uppi (\nu )_{\mc{U}}$-almost every $[e_n] \in D_0$ we have $\lim _{n\ra \mc{U}} (\Phi _n)_*e_n\in U$ and (by Proposition \ref{prop:limU}) $[e_n]\not\in [B_V]$. Therefore, $\uppi (\nu )_{\mc{U}}(D_0)\leq \uppi (\nu ) _{\mc{U}}(D_1)$, where
\[
D_1 = \left\{ [e_n] \in \sP^{erg}_{\mc{U}}\csuchthat \lim _{n\ra \mc{U}} (\Phi _n)_*e_n\in U \mbox{ and }[e_n]\not\in [B_V] \right\} .
\]
Since $\uppi (\nu )(B_U\setminus B_V) < \uppi (\nu )_{\mc{U}}(D_0) \leq \uppi (\nu )_{\mc{U}}(D_1)$, the set $D_1 \setminus ([B_U]\setminus [B_V])$ is $\nu _{\mc{U}}$-non-null and hence nonempty. Fix any $[e_n]$ in this set. Then $[e_n]\not\in [B_V]$ (since $[e_n]\in D_1$), and $[e_n]\not\in [B_U]\setminus [B_V]$, hence
\begin{equation}\label{eqn:contrad}
[e_n]\not\in [B_U] .
\end{equation}
On the other hand, $[e_n]\in D_1$ implies $\lim _{n\ra \mc{U}} (\Phi _n)_*e_n \in U$. Since $U$ is an open neighborhood about $\lim _{n \ra \mc{U}}(\Phi _n )_*e_n$ we have $\{ n \csuchthat (\Phi _n )_*e_n \in U \} \in \mc{U}$. For each $n$ with $(\Phi _n )_*e_n \in U$ we have $(\Phi _n )_*e_n \in SW(e_n)\cap U$ and so $e_n \in B_U$. Therefore $\{ n \csuchthat e_n \in B_U \} \in \mc{U}$, i.e., $[e_n] \in [B_U]$, which contradicts \eqref{eqn:contrad}. \qedhere[Claim \ref{claim:keyclaim}]
\end{proof}

Let $\{ V_i \} _{i\in \N }$ be a countable base of open subsets of $\sP$. Then
\[
\{ (e_0,e_1)\csuchthat e_0\prec_s e_1 \} = \bigcap _{i\in \N } \{ (e_0, e_1) \csuchthat e_0 \in B_{V_i}\Ra e_1\in B_{V_i} \} ,
\]
and $\rho$ concentrates on this set by Claim \ref{claim:keyclaim}.

(ii): If $\rho$ is a coupling of $\uppi (\mu )$ and $\uppi (\nu )$ as in (ii), then in particular $\rho (\{ (e_0,e_1)\csuchthat e_0\prec _s e_1 \} ) =1$, so $\mu \prec _s \nu$ by part (i). Similarly, $\nu \prec _s \mu$, and thus $\nu \sim _s \mu$. The other direction of (ii) will follow from (i) once we establish the final statement of the theorem.

Suppose $\mu \sim _s \nu$ and let $\rho$ be a coupling of $\uppi (\mu )$ and $\uppi (\nu )$ concentrating on $\{ (e_0,e_1)\csuchthat e_0\prec _s e_1 \}$. Suppose toward a contradiction that $\rho ( \{ (e_0, e_1)\csuchthat e_0\succ _s e_1 \} ) < 1$. Then there exists an open subset $U$ of $\sP$ such that
\begin{equation}\label{eqn:BU1}
\rho ( \{ (e_0,e_1)\csuchthat e_0 \not\in B_U \mbox{ and }e_1\in B_U \} ) >0,
\end{equation}
where $B_U = \{ \lambda \in \sP \csuchthat SW(\lambda )\cap U \neq \emptyset \}$. The condition $\rho ( \{ (e_0,e_1)\csuchthat e_0\prec _s e_1 \} ) =1$ implies that
\begin{equation}\label{eqn:BU2}
\rho ( \{ (e_0 , e_1)\csuchthat e_0 \in B_U \} ) = \rho ( \{ (e_0, e_1)\csuchthat e_0 \in B_U\mbox{ and } e_1\in B_U \} )
\end{equation}
Using (\ref{eqn:BU1}) and (\ref{eqn:BU2}) we compute
\begin{align*}
\uppi (\mu )(B_U) &= \rho  ( \{ ( e_0 , e_1) \csuchthat e_0 \in B_U \} )\\
&= \rho  (\{ (e_0, e_1)\csuchthat e_0 \in B_U\mbox{ and }e_1 \in B_U \} ) \\
&< \rho  (\{ (e_0, e_1)\csuchthat e_0 \in B_U\mbox{ and }e_1 \in B_U \} ) + \rho  ( \{ (e_0,e_1)\csuchthat e_0 \not\in B_U \mbox{ and }e_1\in B_U \} ) \\
&= \rho  ( \{ (e_0, e_1)\csuchthat e_1\in B_U \} ) \\
&= \uppi (\nu )(B_U) .
\end{align*}
On the other hand, since $\mu \succ _s \nu$, part (i) implies that we can find coupling $\widetilde{\rho}$ of $\uppi (\mu )$ and $\uppi (\nu )$ such that $\widetilde{\rho} ( \{ (e_0, e_1)\csuchthat e_0\succ _s e_1 \} )=1$ and therefore
\[
\uppi (\nu )(B_U) = \widetilde{\rho} ( \{ (e_0 , e_1) \csuchthat e_1 \in B_U \} ) \leq \widetilde{\rho}  ( \{ (e_0,e_1)\csuchthat e_0 \in B_U \} ) = \uppi (\mu )(B_U) ,
\]
a contradiction.
\end{proof}
%\begin{remark}\label{rem:simjoining}
%The proof of Theorem \ref{thm:Scoupling}.(i) shows that if $\mu \prec \nu$ then there exists a joining $\widetilde{\lambda}$ of $\mu$ and $\nu$ with $\widetilde{\lambda} \sim \nu$ and such that $\rho := (\uppi \times \uppi )_* \widetilde{\lambda}$ is a coupling of $\uppi (\mu )$ and $\uppi (\nu )$ as in (i).
%\end{remark}

\section{Convexity}\label{sec:convexity}

The space $\rm{CloCon}(\sP)$, of all closed convex subsets of $\sP$, is naturally endowed with a convex structure: if $F_1,F_2 \in \rm{CloCon}(\sP)$ and $0\le t\le 1$ then
$$tF_1 + (1-t)F_2 := \{t\mu_1 + (1-t)\mu_2:~\mu_1 \in F_1, \mu_2\in F_2\}.$$
%Be warned that $(1/2)F_1 + (1/2)F_1$ is not necessarily equal to $F_1$. For example, if $F_1=\{\mu_0, \mu_1\}$ and $\mu_0\ne \mu_1$ then $(1/2)F_1 + (1/2)F_1=\{\mu_0, (1/2)(\mu_0+\mu_1), \mu_1\} \ne F_1$.

More generally, if $(\Omega,\omega)$ is a probability space and $F:\Omega \to \rm{CloCon}(\sP)$ a measurable map then
$$\int F(x)~d\omega(x) \subseteq \sP$$
denotes the set of all measures in $\sP$ of the form $\int \sigma(x)~d\omega(x)$ where $\sigma$  runs over all measurable
$\sigma:\Omega \to \sP$ satisfying $\sigma(x) \in F(x)$ for $\omega$-a.e. $x$.

\begin{thm}\label{thm:affine}
%The map $SW : \sP ^{erg}\ra \Es{SW}_\Ga$, $\mu \mapsto SW(\mu )$ is affine. More precisely, i
Let $\omega$ be a Borel probability measure on $\sP$ and assume that there is an $\omega$-conull set $\sP _{\omega}\subseteq \sP$ such that the measures in $\sP _{\omega}$  are mutually singular. Then
$$\int SW(\mu)~d\omega(\mu) = SW\left( \int \mu~d\omega(\mu)\right).$$
It follows that $\Es{SW}_\Ga$ is convex.
% and takes extreme points to extreme points.
\end{thm}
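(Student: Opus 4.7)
The plan is to prove the set-theoretic equality by two inclusions, and then deduce convexity of $\Es{SW}_\Ga$ as a corollary.

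For the inclusion $\int SW(\mu)\,d\omega(\mu)\subseteq SW\!\left(\int\mu\,d\omega(\mu)\right)$, Lemma \ref{lem:semiaffine} applies essentially directly. Given a measurable selection $\sigma:\sP\to\sP$ with $\sigma(\mu)\in SW(\mu)$ for $\omega$-a.e.\ $\mu$, the pushforward $\rho:=(\sigma,\mathrm{id})_*\omega$ is a coupling of $\sigma_*\omega$ with $\omega$ concentrating on $\{(\mu',\mu)\csuchthat\mu'\prec_s\mu\}$. The mutual singularity hypothesis on $\sP_\omega$ is exactly what Lemma \ref{lem:semiaffine} requires of its right marginal, yielding $\int\sigma(\mu)\,d\omega(\mu)=\upbeta(\sigma_*\omega)\prec_s\upbeta(\omega)=\int\mu\,d\omega(\mu)$.

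The reverse inclusion is the real content, and uses the Coupling Theorem. Let $\nu=\int\mu\,d\omega(\mu)$ and fix $\tau\in SW(\nu)$. Theorem \ref{thm:Scoupling}(i) produces a coupling $\rho$ of $\uppi(\tau)$ with $\uppi(\nu)$ concentrating on $\{(e_0,e_1)\in\sP^{erg}\times\sP^{erg}\csuchthat e_0\prec_s e_1\}$. The first step I would record is that mutual singularity propagates through the ergodic decomposition: if $\mu_1,\mu_2\in\sP_\omega$ have disjoint supports $S_1,S_2\subseteq\Ca^\Ga$, then $\uppi(\mu_i)$ concentrates on the disjoint sets $\{e\in\sP^{erg}\csuchthat e(S_i)=1\}$. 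Consequently there is a measurable ``origin map'' $\phi:\sP^{erg}\to\sP$ with $\phi_*\uppi(\nu)=\omega$ and $\phi_*\uppi(\mu)=\delta_\mu$ for $\omega$-a.e.\ $\mu$. Disintegrating $\rho$ along the composite $(e_0,e_1)\mapsto\phi(e_1)$ yields $\rho=\int\rho_\mu\,d\omega(\mu)$, where each $\rho_\mu$ is a coupling of some $\lambda_\mu\in\Prob(\sP^{erg})$ with $\uppi(\mu)$, still supported on pairs satisfying $e_0\prec_s e_1$. Setting $\tau_\mu:=\upbeta(\lambda_\mu)$, Lemma \ref{lem:semiaffine} applies (its mutual singularity hypothesis is automatic because $\uppi(\mu)$ concentrates on $\sP^{erg}$), giving $\tau_\mu\prec_s\mu$, i.e.\ $\tau_\mu\in SW(\mu)$. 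Reversing the disintegration, $\uppi(\tau)=\int\lambda_\mu\,d\omega(\mu)$ and hence $\tau=\upbeta(\uppi(\tau))=\int\tau_\mu\,d\omega(\mu)\in\int SW(\mu)\,d\omega(\mu)$. The step I expect to be the main obstacle is carefully verifying that the disintegration is genuinely $\omega$-indexed and that the right marginals of $\rho_\mu$ really are $\uppi(\mu)$, so that Lemma \ref{lem:semiaffine} applies fibrewise; this is precisely where the mutual singularity assumption on $\sP_\omega$ is indispensable.

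For the convexity corollary, given $SW(\mu_1),SW(\mu_2)\in\Es{SW}_\Ga$ and $t\in[0,1]$, I would use the standard realization fact (recorded at the start of the Compactness section) to replace $\mu_1,\mu_2$ by measures $\mu_1',\mu_2'\in\sP$ with disjoint supports and $SW(\mu_i')=SW(\mu_i)$. Then $\omega:=t\delta_{\mu_1'}+(1-t)\delta_{\mu_2'}$ has $\sP_\omega=\{\mu_1',\mu_2'\}$ mutually singular, so the theorem yields $tSW(\mu_1)+(1-t)SW(\mu_2)=SW(t\mu_1'+(1-t)\mu_2')\in\Es{SW}_\Ga$.
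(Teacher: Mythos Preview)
Your proposal is correct and matches the paper's strategy: Lemma~\ref{lem:semiaffine} for the forward inclusion, Theorem~\ref{thm:Scoupling} followed by a fibrewise application of Lemma~\ref{lem:semiaffine} for the reverse, and the same disjoint-support trick for convexity.

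The one organizational difference worth pointing out is in the reverse inclusion. You build an origin map $\phi:\sP^{erg}\to\sP$ (using mutual singularity of $\sP_\omega$) and disintegrate the coupling $\rho$ over $\omega$ through $\phi$. The paper instead disintegrates $\rho$ directly over its second marginal $\uppi(\upbeta(\omega))$, obtaining $\rho=\int\rho^e\times\updelta_e\,d\uppi(\upbeta(\omega))(e)$ with $\upbeta(\rho^e)\prec_s e$, and then simply \emph{defines} $\rho^{\mu}:=\int\rho^e\,d\uppi(\mu)(e)$ for each $\mu\in\sP$; Lemma~\ref{lem:semiaffine} (with right marginal $\uppi(\mu)$, concentrated on $\sP^{erg}$) gives $\upbeta(\rho^{\mu})\prec_s\mu$, and the affine identity $\uppi(\upbeta(\omega))=\int\uppi(\mu)\,d\omega(\mu)$ yields $\int\upbeta(\rho^{\mu})\,d\omega(\mu)=\nu$. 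This bypasses $\phi$ entirely and in fact shows that the mutual singularity hypothesis on $\sP_\omega$ is used only in the \emph{forward} inclusion (through the hypothesis of Lemma~\ref{lem:semiaffine}), not in the reverse one --- so your remark that it is ``indispensable'' for the fibrewise step overstates its role there. Your route is sound once the measurable origin map is justified, but the paper's is more direct.
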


\begin{remark}
Theorem \ref{thm:affine} implies that $SW(t\bfa \oplus (1-t)\bfb ) =tSW(\bfa)+(1-t)SW(\bfb)$ for all p.m.p.\ actions $\bfa$ and $\bfb$ of $\Ga$, and all $t\in [0,1]$. This is because we can find isomorphic copies $\mu _{\bfa}, \mu _{\bfb} \in \sP$ of $\bfa$ and $\bfb$ respectively, whose supports are disjoint, and hence by Theorem \ref{thm:affine}
\[
SW(t\bfa \oplus (1-t)\bfb ) = SW(t\mu _{\bfa} + (1-t)\mu _{\bfb}) =tSW(\mu _{\bfa}) + (1-t)SW(\mu _{\bfb}) = tSW(\bfa ) + (1-t)SW(\bfb ) .
\]
%By contrast, since elements of $\Es{W}_{\Ga}$ are not necessarily convex subsets of $\sP$, we do not obtain a nice convex structure on $\Es{W}_{\Ga}$. %For example, suppose that $\mu \in \sP$ is strongly ergodic and $W(\mu)$ has cardinality $>1$. Then we claim that
%$$W( (1/2)\mu + (1/2) \mu) \ne (1/2)W( \mu) + (1/2)W(\mu).$$
%In fact, the left hand side is just $W(\mu)$. Since $\mu$ is strongly ergodic, $W(\mu) \subseteq \sP^{erg}$ (by Theorem \ref{thm:stable}). However, the right-hand-side contains non-ergodic measures. For example, if $\nu_1\ne \nu_2 \in W(\mu)$  then the right hand side contains the non-ergodic measure $(1/2)\nu_1 + (1/2)\nu_2$.
\end{remark}

\begin{proof}
By Lemma \ref{lem:semiaffine}, if $f:\sP \to \sP$ is a measurable map satisfying $f(\mu) \prec_s \mu$ for $\omega$-a.e. $\mu$ then
$$\int f(\mu)~d\omega(\mu) \in SW\left( \int \mu~d\omega(\mu)\right).$$
This proves $\int SW(\mu)~d\omega(\mu) \subseteq SW\left( \int \mu~d\omega(\mu)\right).$

To prove the opposite containment, suppose that $\nu \in SW\left( \int \mu~d\omega(\mu)\right)$. Then by Theorem \ref{thm:Scoupling} there exists a coupling $\rho$ of $\uppi(\nu)$ and $\uppi ( \int \mu \, d\omega )= \uppi (\upbeta (\omega  ))$ such that
$$\rho(\{ (e_1,e_2) \in \sP^{erg}\times \sP^{erg}:~e_1 \prec_s e_2\})=1.$$
Let $\rho = \int _{\sP ^{erg}}\rho ^e \times \updelta _e \, d\uppi (\upbeta (\omega ))$ be the disintegration of $\rho$ over $\uppi (\upbeta (\omega ))$. Then $\upbeta (\rho ^e ) \prec _s e$ for $\uppi (\upbeta (\omega ))$-almost every $e\in \sP ^{erg}$, so after redefining $\rho ^e$ on a $\uppi (\upbeta (\omega ))$-null set if necessary we may assume without loss of generality that $\upbeta (\rho ^e ) \prec _s e$ for all $e\in \sP ^{erg}$. For each $\mu \in \sP$ let $\rho ^{\mu} := \int \rho ^e  \, d\uppi (\mu ) (e)$. Then $\upbeta (\rho ^{\mu}) \prec _s \upbeta (\uppi (\mu )) = \mu$ by Lemma \ref{lem:semiaffine}. Since $\uppi (\upbeta (\omega )) = \int \uppi (\mu ) \, d\omega$ we have
\[
\int \rho ^{\mu} \, d\omega (\mu ) = \int \int \rho ^e \, d\uppi (\mu ) (e) \, d\omega (\mu ) = \int \rho ^e \, d(\uppi (\upbeta (\omega ))) (e) = \uppi (\nu ) .
\]
Therefore, $\mu \mapsto \upbeta (\rho ^{\mu})$ witnesses that $\nu \in \int SW(\mu)~d\omega(\mu)$. This proves that $\int SW(\mu)~d\omega(\mu) \supseteq SW\left( \int \mu~d\omega(\mu)\right).$

To see that $\Es{SW}_\Ga$ is convex, given $SW(\mu ), SW (\nu ) \in \Es{SW}_{\Ga}$ and $t\in [0,1]$, we can find isomorphic copies $\mu '$ and $\nu '$, of $\mu$ and $\nu$ respectively, whose supports are disjoint. Then $t SW(\mu ) + (1-t)SW(\nu ) =t SW(\mu ' ) + (1-t)SW(\nu ') = SW(t\mu ' + (1-t)\nu ' ) \in \Es{SW}_{\Ga}$. 
\end{proof}

\section{Simplex}

In this section, we prove $\Es{SW}_\Ga$ is a simplex. Let $\Es{SW}^{ext}_\Ga \subseteq \Es{SW}_\Ga$ denote the subspace of extreme stable weak equivalence classes. More  precisely, $S \in \Es{SW}^{ext}_\Ga$ if and only if the equation $S = t S_1 + (1-t)S_2$ with $S_1,S_2 \in \Es{SW}_\Ga$ and $t \in (0,1)$ implies $S_1=S_2=S$.

\begin{thm}\label{thm:simplex}
For each stable weak equivalence class $S\in \Es{SW}_\Ga$ there exists a unique Borel probability measure $\uppi (S)$ on $\Es{SW}^{ext}_\Ga$ such that $S = \int _{E\in \Es{SW}^{ext}_\Ga} E\ d\uppi (S)$. Furthermore, for any $\mu \in \sP$ we have $\uppi (SW(\mu )) = SW_*\uppi (\mu )$.
\end{thm}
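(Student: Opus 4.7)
The plan is to construct $\uppi(S)$ by pushing the ergodic decomposition of a representative $\mu$ of $S$ forward along $SW$, and to verify existence and uniqueness by repeated use of the Coupling Theorem (Theorem~\ref{thm:Scoupling}) together with Theorem~\ref{thm:affine}.

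First I would show that $SW(e)$ is extreme in $\Es{SW}_\Ga$ for every $e \in \sP^{erg}$. If $SW(e) = tS_1 + (1-t)S_2$ with $S_i = SW(\mu_i)$, then replacing the $\mu_i$ by disjoint isomorphic copies $\mu_i'$ and applying Theorem~\ref{thm:affine} gives $e \sim_s t\mu_1' + (1-t)\mu_2'$. Theorem~\ref{thm:Scoupling}(ii) then supplies a coupling of $\uppi(e) = \updelta_e$ with $t\uppi(\mu_1') + (1-t)\uppi(\mu_2')$ concentrated on $\sim_s$-pairs; the point-mass first marginal forces each $\uppi(\mu_i')$ onto the $\sim_s$-class of $e$, so $S_1 = S_2 = SW(e)$.

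I would then define $\uppi(SW(\mu)) := SW_*\uppi(\mu)$. This is well-defined: if $\mu \sim_s \nu$, Theorem~\ref{thm:Scoupling} supplies a coupling of $\uppi(\mu)$ with $\uppi(\nu)$ on $\sim_s$-pairs whose pushforward under $SW \times SW$ is a coupling of $SW_*\uppi(\mu)$ with $SW_*\uppi(\nu)$ supported on the diagonal, forcing the marginals to agree. By the previous paragraph $\uppi(S)$ is supported on $\Es{SW}^{ext}_\Ga$, and the barycenter identity $S = \int E \, d\uppi(S)(E)$ is immediate from Theorem~\ref{thm:affine} applied with $\omega = \uppi(\mu)$ (distinct ergodic measures on $\Ca^\Ga$ being mutually singular): $\int SW(e) \, d\uppi(\mu)(e) = SW\bigl(\int e \, d\uppi(\mu)(e)\bigr) = SW(\mu) = S$.

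For uniqueness, I will first invoke the classical fact that an extreme point of a compact convex subset of a Hausdorff locally convex space (which applies here since $\Es{SW}_\Ga$ sits in a Banach space, as in \cite{burton-weak-2015}) admits $\updelta_E$ as its unique representing Borel probability measure: a short Hahn--Banach argument shows that if $\lambda$ represents $E$ and some $y \ne E$ lies in its support, splitting $\lambda$ along a half-space separating $E$ from $y$ and invoking extremality of $E$ forces $E$ to lie in a closed half-space not containing it. Applied to $E = SW(\mu) \in \Es{SW}^{ext}_\Ga$ this gives $SW_*\uppi(\mu) = \updelta_E$, so $\uppi(\mu)$ concentrates on $SW^{-1}(\{E\}) \cap \sP^{erg}$; in particular $SW : \sP^{erg} \ra \Es{SW}^{ext}_\Ga$ is surjective. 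Now given any Borel probability $\lambda$ on $\Es{SW}^{ext}_\Ga$ with $\int E \, d\lambda(E) = S$, Jankov--von Neumann supplies a universally measurable section $s$ of this surjection, and $\omega := s_*\lambda$ satisfies $SW_*\omega = \lambda$ while Theorem~\ref{thm:affine} gives $SW(\upbeta(\omega)) = \upbeta(\lambda) = S = SW(\mu)$, so $\upbeta(\omega) \sim_s \mu$. The well-definedness argument then yields $\lambda = SW_*\omega = SW_*\uppi(\upbeta(\omega)) = SW_*\uppi(\mu) = \uppi(S)$; the identity $\uppi(SW(\mu)) = SW_*\uppi(\mu)$ is built into the definition. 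The main subtlety should be the measurable selection, which however goes through cleanly via Jankov--von Neumann since $SW|_{\sP^{erg}}$ is Borel by Corollary~\ref{cor:semicontinuity} between standard Borel spaces.
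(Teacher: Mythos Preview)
Your proof is correct and follows essentially the same architecture as the paper's: existence via $SW_*\uppi(\mu)$ and Theorem~\ref{thm:affine}, surjectivity of $SW:\sP^{erg}\to\Es{SW}^{ext}_\Ga$ via the ``extreme point has Dirac representing measure'' fact, and uniqueness via a Jankov--von Neumann section together with the Coupling Theorem pushed forward by $SW\times SW$. The one substantive difference is in how you establish that $SW(e)$ is extreme for ergodic $e$: you deduce it directly from the Coupling Theorem, whereas the paper (Lemma~\ref{lem:extreme}) argues that any subsimplex of $\sP$ lying in $\Es{SW}_\Ga$ is extreme and then invokes Theorem~\ref{thm:stable}; both routes are short and valid, and your variant has the mild advantage of not relying on Theorem~\ref{thm:stable}.
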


\begin{lem}\label{lem:extreme}
If $S \in \Es{SW}_\Ga$ is a subsimplex of $\sP$ then it is extreme.  In particular, if $\mu \in \sP^{erg}$ then $SW(\mu)\in \Es{SW}_\Ga^{ext}$. Conversely, if $S \in \Es{SW}^{ext}_\Ga$ then there exist an ergodic $\mu \in \sP^{erg}$ such that $S = SW(\mu)$.
\end{lem}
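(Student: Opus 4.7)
\textbf{Forward direction} (subsimplex $\Rightarrow$ extreme). Suppose $S$ is a subsimplex and $S = tS_1 + (1-t)S_2$ in $\Es{SW}_\Ga$ with $t \in (0,1)$; I will show $S = S_1 = S_2$. Let $E \subseteq \sP^{erg}$ denote the extreme points of $S$. Each $e \in E$ is extreme in $\sP$, so a decomposition $e = t\nu_1 + (1-t)\nu_2$ with $\nu_i \in S_i \subseteq \sP$ forces $\nu_1 = \nu_2 = e$; hence $E \subseteq S_1 \cap S_2$, and since $S_i$ is closed convex, $S = \overline{\mathrm{conv}}(E) \subseteq S_i$. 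For the reverse inclusion, pick representatives $\mu_i$ with $SW(\mu_i) = S_i$ realized on disjoint clopens of $\Ca$ (possible by Section~6, (2)), and set $\mu_0 := t\mu_1 + (1-t)\mu_2$. By Theorem~\ref{thm:affine} (the support of $t\delta_{\mu_1}+(1-t)\delta_{\mu_2}$ consists of mutually singular measures), $SW(\mu_0) = tS_1 + (1-t)S_2 = S$. Since $\mu_0$ is isomorphic to $t\mu_1 \oplus (1-t)\mu_2$, the trivial containment $\mu_0 \prec \mu_0$ and Theorem~\ref{thm:weak-decomposition}(2) give $\mu_1 \prec \mu_0$, whence $S_1 = SW(\mu_1) \subseteq SW(\mu_0) = S$; symmetrically $S_2 \subseteq S$. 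The ``in particular'' clause follows immediately: if $\mu \in \sP^{erg}$ then $SW(\mu)$ is a subsimplex by Theorem~\ref{thm:stable}, hence extreme.

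\textbf{Converse.} Given $S \in \Es{SW}^{ext}_\Ga$, write $S = SW(\nu)$ and consider the Borel probability measure $\lambda := SW_*\uppi(\nu)$ on $\Es{SW}_\Ga$ (measurability of $SW$ follows from Corollary~\ref{cor:semicontinuity}). The plan is to show $\lambda = \delta_S$; then any $e$ in the $\uppi(\nu)$-full-measure set $\{e \in \sP^{erg} : SW(e) = S\}$ is the desired ergodic measure. Note first that if $\lambda$ were Dirac at some $T$, Theorem~\ref{thm:affine} would give $T = SW(\nu) = S$; so if $\lambda \ne \delta_S$, then $\lambda$ is not Dirac, and we may choose an open $V \subseteq \Es{SW}_\Ga$ with $0 < \lambda(V) < 1$. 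Put $A_1 := SW^{-1}(V) \cap \sP^{erg}$ and $A_2 := \sP^{erg} \setminus A_1$, and form the conditional barycenters $\nu_i := \upbeta(\uppi(\nu)|_{A_i}/\uppi(\nu)(A_i))$. Their ergodic decompositions live in disjoint sets, so the $\nu_i$ are mutually singular, and Theorem~\ref{thm:affine} gives
\[
S = \uppi(\nu)(A_1)\, SW(\nu_1) + \uppi(\nu)(A_2)\, SW(\nu_2).
\]
Extremality of $S$ forces $SW(\nu_1) = S$, i.e., $\nu_1 \sim_s \nu$. By the coupling theorem (Theorem~\ref{thm:Scoupling}(ii)) there is a coupling of $\uppi(\nu_1)$ with $\uppi(\nu)$ concentrated on $\{(e_0,e_1) : SW(e_0) = SW(e_1)\}$; pushing forward under $SW$ on each coordinate yields a coupling on $\Es{SW}_\Ga \times \Es{SW}_\Ga$ supported on the diagonal, so its marginals coincide: $SW_*\uppi(\nu_1) = SW_*\uppi(\nu) = \lambda$. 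But $\uppi(\nu_1)$ is supported on $A_1 \subseteq SW^{-1}(V)$, so $\lambda$ must be supported on $V$, contradicting $\lambda(V^c) > 0$.

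\textbf{Main obstacle.} The subtle point is that finite-combination extremality of $S$ does \emph{not} by itself force integral representations $S = \int T\,d\lambda(T)$ over $\Es{SW}_\Ga$ to be trivial; generic compact convex sets have extreme points admitting nontrivial Choquet-type representations. The coupling theorem is the decisive input: it upgrades the soft equality $SW(\nu_1) = SW(\nu)$ (which is all that finite extremality delivers) into the rigid pushforward equality $SW_*\uppi(\nu_1) = SW_*\uppi(\nu)$ of measures on $\Es{SW}_\Ga$. This rigidity is precisely what allows a single open set $V$ separating positive mass of $\lambda$ to produce a genuine contradiction, and it is the only step in the proof that reaches beyond Theorem~\ref{thm:affine}.
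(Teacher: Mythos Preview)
Your forward direction contains a genuine gap in the ``reverse inclusion'' step. You invoke Theorem~\ref{thm:weak-decomposition}(2) with $\bfa = \mu_0$ to conclude $\mu_1 \prec \mu_0$, but that theorem carries the standing hypothesis that $\bfa$ is \emph{ergodic}, and your $\mu_0 = t\mu_1 + (1-t)\mu_2$ (with $\mu_1,\mu_2$ mutually singular and $0<t<1$) is never ergodic. In fact your argument, as written, nowhere uses the subsimplex hypothesis on $S$ for the inclusion $S_i\subseteq S$; if it were valid it would show that \emph{every} element of $\Es{SW}_\Ga$ is extreme. A concrete failure: take $\mu_1$ free strongly ergodic and $\mu_2=\delta_x$ a point mass at a fixed point; then $\mu_1\notin SW(\mu_0)=\tfrac12 SW(\mu_1)+\tfrac12 SW(\delta_x)$, since $\mu_1$ ergodic and nonatomic cannot equal $\tfrac12\eta+\tfrac12\delta_y$. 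The paper's proof of $S_i\subseteq S$ is instead a pure convex-geometry argument that \emph{does} exploit the subsimplex hypothesis: for $\nu_1\in S_1$ and any $\nu_2\in S_2$ one has $t\nu_1+(1-t)\nu_2\in S$; since $S$ is a subsimplex, its Choquet boundary lies in $\sP^{erg}$, so by uniqueness of ergodic decomposition in $\sP$ the ergodic components of $\nu_1$ lie in $S$, whence $\nu_1\in S$.

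Your converse argument, on the other hand, is correct and in fact more careful than the paper's. The paper simply writes $S=\int SW(e)\,d\uppi(\mu)(e)$ and asserts that extremality of $S$ forces $SW(e)=S$ almost everywhere; this tacitly relies on the standard fact (via Burton's embedding of $\Es{SW}_\Ga$ into a Banach space, or equivalently the support-function embedding of $\mathrm{CloCon}(\sP)$) that an extreme point of a compact convex set in a locally convex space is the barycenter only of its own Dirac measure. You correctly identify this as the delicate step, and your route through the Coupling Theorem---splitting $\uppi(\nu)$ along $SW^{-1}(V)$, using finite extremality to get $SW(\nu_1)=S$, and then invoking Theorem~\ref{thm:Scoupling}(ii) to upgrade $\nu_1\sim_s\nu$ to $SW_*\uppi(\nu_1)=SW_*\uppi(\nu)$---is a clean, self-contained alternative that does not appeal to any external embedding.
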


\begin{proof}
Let $S \in \Es{SW}_\Ga$ be a subsimplex of $\sP$ and suppose $S = tS_1 + (1-t)S_2$ for some $S_1,S_2 \in \Es{SW}_\Ga$ and $t \in (0,1)$. For every ergodic measure $\nu \in S$ we must be able to write $\nu = t\nu_1 + (1-t) \nu_2$ for some $\nu_i \in S_i$ ($i=1,2$). Since $\nu$ is ergodic, $\nu_1=\nu_2=\nu$. So $S \cap \sP^{erg} \subseteq S_1 \cap S_2$. By hypothesis, $S$ is the closed convex hull of $S \cap \sP^{erg}$. Since $S_1$ and $S_2$ are convex, $S \subseteq S_1 \cap S_2$. To obtain a contradiction, suppose $\nu_1 \in S_1 \setminus S$. Let $\nu_2 \in S_2$. Then $t\nu_1 + (1-t)\nu_2 \in S$. By the ergodic decomposition theorem, almost every ergodic component of $\nu_1$ must be contained in $S$ and therefore, $\nu_1 \in S$. This contradiction shows that $S_1 \cap S_2 \subseteq S$. So $S=S_1=S_2$ as claimed.

Suppose $\mu \in \sP^{erg}$. By Theorem \ref{thm:stable}, $SW(\mu)$ is a subsimplex of $\sP$. So the previous paragraph implies $SW(\mu) \in \Es{SW}^{ext}_\Ga$.

For the converse, suppose $S \in \Es{SW}_\Ga^{ext}$. Let $\mu \in \sP$ such that $S=SW(\mu)$. By Theorem \ref{thm:affine},
$$S=SW(\mu) = \int SW(e)~d\uppi (\mu )(e).$$
Since $SW(\mu)$ is extreme, we must have $SW(e) = S$ for $\uppi(\mu)$-a.e. $e \in \sP^{erg}$.
\end{proof}

\begin{proof}[Proof of Theorem \ref{thm:simplex}]

Lemma \ref{lem:extreme} shows that $SW$ maps $\sP^{erg}$ onto $\Es{SW}^{ext}_\Ga$. So $SW_* : \Prob(\sP) \ra \Prob(\Es{SW}_\Ga)$ maps $\Prob(\sP^{erg})$ onto $\Prob(\Es{SW}^{ext}_\Ga)$. In addition, if $\mu \in \sP$ then $SW_*\uppi (\mu )$ is a Borel probability measure on $\Es{SW}^{ext}_\Ga$ whose barycenter is $SW(\mu )$ since
$$\int _{\Es{SW}^{ext}_\Ga} E \ dSW_*\uppi (\mu )(E) = \int  SW(e) \, d\uppi (\mu )(e) = SW\left(\int e \, d\uppi (\mu )(e) \right) =SW (\mu )$$
where the second equality holds by Theorem \ref{thm:affine} and the other equalities hold by definition. This shows that every stable weak equivalence class is represented by a measure on $\Es{SW}^{ext}_\Ga$. We now show that this representation is unique.

Let $\kappa _0$ and $\kappa _1$ be Borel probability measures on $\Es{SW}^{ext}_\Ga$ with $\int  E \, d\kappa _0(E) = S = \int  E \, d\kappa _1(E)$. We must show that $\kappa _0 = \kappa _1$. By \cite[Theorem 18.1]{Ke95} and Lemma \ref{lem:extreme} there exists a universally measurable map $s: \Es{SW}^{ext}_\Ga\ra \sP^{erg}$ with $SW(s(E))= E$ for all $E\in \Es{SW}^{ext}_\Ga$. For $i\in \{ 0,1 \}$ let $\mu _i = \upbeta (s_*\kappa _i) \in \sP$. Then
$$SW( \mu _i ) = \int  SW (e) \, ds_*\kappa _i(e) = \int SW(s(E))\, d\kappa _i(E) = \int  E\, d\kappa _i(E) = S,$$
so $\mu _0$ and $\mu _1$ are stably weakly equivalent. By Theorem \ref{thm:Scoupling} there exists a coupling $\rho$ of $\uppi (\mu _0)$ and $\uppi (\mu _1)$ with $\rho (\{ (e_0,e_1)\csuchthat e_0\sim _s e_1 \} ) =1$. We have $\uppi (\mu _i ) = \uppi (\upbeta (s_*\kappa _i )) = s_*\kappa _i$, so $\rho$ is a coupling of $s_*\kappa _0$ and $s_*\kappa _1$. Then $(SW\times SW)_*\rho$ is a coupling of $\kappa _0$ and $\kappa _1$ with
\[
(SW\times SW)_*\rho \left(\left\{ (E_0,E_1)\in (\Es{SW}^{ext}_\Ga )^2 \csuchthat E_0 = E_1 \right\} \right) = \rho (\{ (e_0,e_1)\csuchthat e_0\sim _s e_1 \} ) =1.
\]
It follows that for any Borel $B\subseteq \Es{SW}^{ext}_\Ga$ we have $\kappa _0 (B) = (SW\times SW)_*\rho (B\times \Es{SW}^{ext}_\Ga ) = (SW\times SW)_*\rho (B\times B) = (SW\times SW)_*\rho (\Es{SW}^{ext}_\Ga \times B ) = \kappa _1(B)$ and so $\kappa _0 = \kappa _1$.

The second statement follows from the first and the fact that $SW(\mu ) = \int  E\ dSW_*\uppi (\mu )(E)$.\qedhere
\end{proof}

In \cite[Theorem 1.5]{burton-weak-2015}, P. Burton shows that $\Es{SW}_\Ga$ is affinely homeomorphic to a convex compact subset of a Banach space. The proof uses an abstract characterization of convex compact subsets of Banach spaces due to Capraro and Fritz \cite{MR3034438}. It now follows from Theorem \ref{thm:simplex} that $\Es{SW}_\Ga$ is a Choquet simplex (equivalently, it is a convex compact subset of a locally convex topological vector space with the property that every element admits a unique representation as the barycenter of a probability measure on the space of extreme points).

\section{Property (T) groups}

\begin{thm}\label{thm:T}
Suppose $\Ga$ is a countable group with property (T). Then $\Es{SW}_\Ga$ is a Bauer simplex; the set $\Es{SW}^{ext}_\Ga \subseteq \Es{SW}_\Ga$ of extreme points is closed.
\end{thm}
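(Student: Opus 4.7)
The plan is to take a convergent sequence $S_n \to S$ in $\Es{SW}_\Ga$ with $S_n \in \Es{SW}^{ext}_\Ga$, lift it to ergodic measures $\mu_n$ with $S_n = SW(\mu_n)$ (via Lemma \ref{lem:extreme}), pass to a weak$^*$ convergent subsequence $\mu_n \to \mu$, and then prove (using property (T)) that $\mu$ is ergodic and $SW(\mu)=S$; Lemma \ref{lem:extreme} then delivers $S \in \Es{SW}^{ext}_\Ga$. The three ingredients I want to extract from property (T) are: (a) every ergodic action is strongly ergodic, (b) $\sP^{erg}$ is closed in $\sP$, and (c) $SW$ is continuous on $\sP^{erg}$.

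For (b), assume $\mu$ is not ergodic; then $L^2_0(\mu)$ carries a nonzero $\Ga$-invariant vector from a $\Ga$-invariant Borel set of intermediate $\mu$-measure. Fix Kazhdan constants $(F_0,\epsilon_0)$ for $\Ga$, approximate this invariant vector in $L^2(\mu)$ by a continuous mean-zero function $h:\Ca^\Ga\to\R$ whose $L^2(\mu)$-displacements under $F_0$ are much smaller than $\epsilon_0\|h\|_{L^2(\mu)}$, and transfer $h$ to $L^2_0(\mu_n)$ by subtracting $\int h\, d\mu_n$ and normalizing. Since $h$ is continuous and $\mu_n\to\mu$ weak$^*$, all the relevant $L^2(\mu_n)$ quantities converge to their $L^2(\mu)$ counterparts, producing for large $n$ an $(F_0,\epsilon_0)$-invariant unit vector in $L^2_0(\mu_n)$. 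This contradicts property (T) applied to the Koopman representation of the ergodic $\mu_n$, which forbids such almost-invariant vectors. Hence $\mu\in \sP^{erg}$, so $SW(\mu)\in\Es{SW}^{ext}_\Ga$ and, by Corollary \ref{cor:semicontinuity}, $SW(\mu)\subseteq S$.

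For the reverse containment $S\subseteq SW(\mu)$, take $\nu\in S$ and write $\nu=\lim\nu_n$ with $\nu_n\in SW(\mu_n)$. Property (T) together with Theorem \ref{thm:stable} makes $SW(\mu_n)$ a Bauer simplex with extreme boundary $W(\mu_n)\subseteq \sP^{erg}$, and the Coupling Theorem \ref{thm:Scoupling} then forces the ergodic decomposition $\uppi(\nu_n)$ to concentrate on $W(\mu_n)$. A weak$^*$ subsequential limit $\uppi(\nu_n)\to\tau$ is supported in the (now closed) set $\sP^{erg}$ with $\upbeta(\tau)=\nu$, so uniqueness of ergodic decomposition gives $\tau=\uppi(\nu)$, supported on $\limsup_n W(\mu_n)$. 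If I can show $\limsup_n W(\mu_n)\cap\sP^{erg}\subseteq W(\mu)$, then $\uppi(\nu)$ concentrates on $W(\mu)$, so $\nu=\upbeta(\uppi(\nu))\in \overline{\mathrm{conv}}(W(\mu))=SW(\mu)$, completing the proof.

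The main obstacle is precisely this upper semi-continuity of $W$ on $\sP^{erg}$: given $e_n\in W(\mu_n)$ with $e_n\to e$, show $e\prec\mu$. The natural attack is to fix a clopen partition $\cQ$ of $\Ca^\Ga$, a finite $F\subseteq\Ga$, and $\epsilon>0$, produce for each $n$ a Borel partition witnessing $e_n\prec\mu_n$ for $(\cQ,F,\epsilon/3)$, approximate it by a clopen $\tilde\cP^{(n)}$ in $\mu_n$-measure, pigeonhole on the countable set of clopen partitions of $\Ca^\Ga$ to find a single $\cP$ appearing infinitely often, and then use weak$^*$ convergence of $e_n\to e$, $\mu_n\to\mu$ on the clopen sets $gP_i\cap P_j$ and $gQ_i\cap Q_j$ to pass to the limit. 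The obstruction is that the clopen complexity of $\tilde\cP^{(n)}$ may blow up with $n$, defeating pigeonhole---this is exactly the phenomenon exhibited by the Bernoulli-to-Dirac example after Corollary \ref{cor:semicontinuity}. The property (T) hypothesis must be brought to bear to force a uniform complexity bound on matching partitions, by coupling the uniform Kazhdan/spectral-gap estimate for the Koopman representations of the $\mu_n$ with the combinatorial partition data in the weak containment definition; this is the step where I expect the real work to lie.
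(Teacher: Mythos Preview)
Your strategy has an unfixable gap: the map $SW$ is \emph{not} continuous on $\sP^{erg}$ even when $\Ga$ has property (T), so the equality $SW(\mu)=S$ can fail for the limit $\mu$ of your chosen representatives. Here is a concrete counterexample. Fix any ergodic $\nu\in\sP^{erg}$ that is not a Dirac mass at a fixed point (e.g.\ a Bernoulli measure). Identify $\Ca=\{0,1\}^{\N}$, let $c_0=0^\infty$, and let $x_0\in\Ca^\Ga$ be the constant function $g\mapsto c_0$, a $\Ga$-fixed point. The maps $\psi_n:\Ca\to\Ca$, $\psi_n(c)=(0^n,c)$, are topological embeddings converging uniformly to the constant $c_0$; applying them coordinatewise gives $\Ga$-equivariant embeddings $\Psi_n:\Ca^\Ga\to\Ca^\Ga$. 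Set $\mu_n:=(\Psi_n)_*\nu$. Then each $(\Ca^\Ga,\mu_n)$ is $\Ga$-equivariantly isomorphic to $(\Ca^\Ga,\nu)$, so $\mu_n\in\sP^{erg}$ and $SW(\mu_n)=SW(\nu)$ for every $n$, while $\mu_n\to\delta_{x_0}=:\mu$ weak$^*$. Thus $S_n\equiv SW(\nu)=S$, yet $SW(\mu)=SW(\delta_{x_0})\subsetneq SW(\nu)=S$. In particular your target inclusion $\limsup_n W(\mu_n)\cap\sP^{erg}\subseteq W(\mu)$ is false here: the left side is $W(\nu)\cap\sP^{erg}\ni\nu$, and the right side is the set of Dirac masses at fixed points. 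The ``real work'' you flag at the end is directed at a false statement; no amount of spectral-gap uniformity will rescue it.

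The paper avoids this by never choosing point representatives $\mu_n$ at all. Since each $S_n$ is extreme, Theorem~\ref{thm:stable} and Lemma~\ref{lem:extreme} make $S_n$ a subsimplex of $\sP$, so $S_n=\overline{\mathrm{conv}}\,(S_n\cap\sP^{erg})$. Property (T) enters only through your point (b), namely that $\sP^{erg}$ is closed in $\sP$ (Glasner--Weiss); hence $S_n\cap\sP^{erg}\in\Closed(\sP)$, and after passing to a subsequence $S_n\cap\sP^{erg}\to K$ in the Vietoris topology with $K\subseteq\sP^{erg}$. Since the closed-convex-hull map is continuous for the Hausdorff metric on a compact convex set, $S_n\to\overline{\mathrm{conv}}\,(K)$, so $S_\infty=\overline{\mathrm{conv}}\,(K)$. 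By Milman's partial converse to Krein--Milman the extreme points of $S_\infty$ lie in $K\subseteq\sP^{erg}$, so $S_\infty$ is again a subsimplex of $\sP$ and hence extreme by Lemma~\ref{lem:extreme}. No continuity of $SW$ is invoked, and point (a) is never used.
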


\begin{proof}
Let $\{S_n\} \subseteq \Es{SW}^{ext}_\Ga$ be a sequence of extreme stable weak equivalence classes. Suppose $\lim_n S_n = S_\infty \in \Es{SW}_\Ga$. It suffices to show $S_\infty$ is extreme.

Because each $S_n$ is extreme, $S_n$ is a subsimplex of $\sP$ (Theorem \ref{thm:stable} and Lemma \ref{lem:extreme}). Therefore, it is the convex hull of $S_n \cap \sP^{erg}$. Because $\Ga$ has property (T), $\sP^{erg}$ is closed in $\sP$ \cite{glasner1997kazhdan}. After passing to a subsequence we may assume that $S_n \cap \sP^{erg}$ converges to some subset $K \subseteq \sP^{erg}$ as $n\to\infty$. But this implies $S_n$ converges to the convex hull of $K$; and therefore $S_\infty$ is the convex hull of $K$. So $S_\infty$ is a subsimplex of $\sP$ which implies that it is extreme by Lemma \ref{lem:extreme}.

\end{proof}

\section{Groups with many extreme stable weak equivalence classes}

In \cite{BG13}, Brown and Guentner associate a $C^*$-algebra $C^*_D(\Gamma )$ to each algebraic ideal $D$ in $\ell ^{\infty}(\Gamma )$. We will be concerned with the case $D=\ell ^p(\Gamma )$ for $2\leq p<\infty$, and we write $C^*_{\ell ^p} (\Gamma )$ for $C^*_{\ell ^p(\Gamma )}(\Gamma )$, which is defined as follows.

\begin{defn}[\cite{BG13}]
Let $\pi$ be a unitary representation of $\Gamma$ on a Hilbert space $\mc{H}_{\pi}$, and let $2\leq p < \infty$. The representation $\pi$ is said to be an {\bf $\ell ^p(\Gamma )$-representation} if there exists a dense linear subspace $\mc{H}_0$ of $\mathcal{H}_{\pi}$ such that for all $\xi ,\eta \in \mc{H}_0$ the matrix coefficient $\pi _{\xi , \eta} :\gamma \mapsto \langle \pi (\gamma  )\xi , \eta \rangle$, belongs to $\ell ^p(\Gamma )$.  The $C^*$-algebra $C^*_{\ell ^p} (\Gamma )$ is defined as the completion of the group ring $\C [\Gamma ]$ with respect to the $C^*$-norm
\[
\| x \| _{C^*_{\ell ^p}} := \sup \{ \| \pi (x) \| \, : \, \pi \text{ is an }\ell ^p(\Gamma  )\text{-representation} \} ,
\]
where $\| \pi (x) \|$ denotes the operator norm of $\pi (x)$.
\end{defn}

Since $\Gamma$ is countable, and since the direct sum of $\ell ^p(\Gamma )$-representations is an $\ell ^p(\Gamma )$-representation, we can in fact find an $\ell ^p(\Gamma )$-representation, denoted $\sigma ^p_{\Gamma}$, on a separable Hilbert space $\mathcal{H}_{\sigma ^p_{\Gamma}}$, such that $\| x \| _{C^*_{\ell ^p}} = \| \sigma ^p_{\Gamma }(x) \|$ for all $x\in \C [\Gamma ]$. Hence, $C^*_{\ell ^p}(\Gamma )$ is isomorphic to the $C^*$-subalgebra of $\mathcal{B}(\mathcal{H}_{\sigma ^p_{\Gamma}})$ generated by $\sigma ^p_{\Gamma}(\Gamma )$. By \cite[Chapter 18]{dixmier1977c} $\sigma ^p_{\Gamma}$ is uniquely defined up to weak equivalence of unitary representations, and, up to weak equivalence $\sigma ^p_{\Gamma}$ is the unique $\ell ^p(\Gamma )$-representation which weakly contains all other $\ell ^p(\Gamma )$-representations. If $p\leq q$ then $\| x \| _{C^*_{\ell ^p}}\leq \| x \| _{C^*_{\ell ^q}}$ for all $x\in \C [\Gamma ]$, and the canonical quotient map from $C^*_{\ell ^q}(\Gamma )$ onto $C^*_{\ell ^p}(\Gamma )$ is an isomorphism if and only if $\sigma ^p_{\Gamma}$ and $\sigma ^q_{\Gamma}$ are weakly equivalent \cite{dixmier1977c}. The main result of this section is a direct consequence of the following striking result of Okayasu.

\begin{thm}[\cite{Oka14}]\label{thm:Oka14}
Let $F_2$ denote the free group on two generators and let $2\leq p < q <\infty$. Then the canonical quotient map $C^*_{\ell ^q}(\Gamma )\rightarrow C^*_{\ell ^p}(\Gamma )$ is not injective, and hence the unitary representations $\sigma ^p_{F_2}$ and $\sigma ^q_{F_2}$ are weakly inequivalent.
\end{thm}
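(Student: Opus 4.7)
The strategy is to exhibit an explicit element $T \in \C[F_2]$ whose norm in $C^*_{\ell^p}(F_2)$ depends non-trivially on $p$. I would take $T = a + a^{-1} + b + b^{-1}$, where $a,b$ are the free generators, and work with the one-parameter family of spherical complementary series representations $\{\pi_s\}_{s \in (0,1/2]}$ of $F_2$. Each $\pi_s$ is an irreducible unitary representation whose spherical matrix coefficient satisfies $|\phi_s(g)| \asymp 3^{-s|g|}$ (up to polynomial factors in $|g|$), where $|g|$ denotes word length, with $\pi_{1/2}$ weakly equivalent to the left regular representation.

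Since the sphere of radius $n$ in $F_2$ has cardinality $\asymp 3^n$, we have $\phi_s \in \ell^p(F_2)$ iff $\sum_n 3^{n(1-sp)} < \infty$, iff $s > 1/p$. In particular, for $s$ in the band $(1/q, 1/p)$ the representation $\pi_s$ is an $\ell^q$-representation but not an $\ell^p$-representation. Moreover, the two-term recursion satisfied by spherical functions on the tree of $F_2$ yields an explicit formula for $\|\pi_s(T)\|$ as a continuous, strictly decreasing function $f(s)$ on $(0, 1/2]$, with $f(1/2) = 2\sqrt{3}$ by Kesten's formula and $f(s) \to 4 = \|T\|_{C^*(F_2)}$ as $s \to 0^+$. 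Consequently,
\[
\|T\|_{C^*_{\ell^q}} \;\ge\; \sup_{s > 1/q} \|\pi_s(T)\| \;=\; f(1/q) \;>\; f(1/p) \;=\; \sup_{s > 1/p}\|\pi_s(T)\|.
\]
The next step is to establish the matching upper bound $\|T\|_{C^*_{\ell^p}} \le f(1/p)$, which together with the display forces $\|T\|_{C^*_{\ell^p}} < \|T\|_{C^*_{\ell^q}}$ and hence the non-injectivity of the canonical quotient $C^*_{\ell^q}(F_2) \to C^*_{\ell^p}(F_2)$.

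The main obstacle is this matching upper bound, since arbitrary $\ell^p$-representations need not be spherical. I would try two approaches. First, I would attempt to disintegrate a general $\ell^p$-representation into its irreducible components via a Plancherel-type decomposition for $F_2$, and then verify that only spherical components with parameter $s > 1/p$ can appear in the support. Alternatively, I would try to derive a Haagerup-type inequality adapted to the $\ell^p$-setting that converts the $\ell^p$-decay of matrix coefficients of a representation $\pi$ directly into a spectral bound on $\pi(T)$. Either route requires delicate harmonic analysis on the tree structure of $F_2$, and this is the technical heart of the argument.
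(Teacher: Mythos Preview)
The paper does not prove this theorem at all: it is quoted verbatim from Okayasu \cite{Oka14} and used as a black box. So there is no ``paper's own proof'' to compare against; the only relevant comparison is with Okayasu's original argument.

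Your outline is in fact very close to what Okayasu does. He works with the radial element $T=\sum_{|g|=1}g$ and the spherical functions $\phi_s$ on the Cayley tree, and the lower bound you describe (via the complementary series $\pi_s$ with $s$ slightly above $1/q$) is exactly right. The part you flag as the ``main obstacle'' --- the matching upper bound $\|T\|_{C^*_{\ell^p}}\le f(1/p)$ --- is indeed the crux, and Okayasu settles it by the second of your two proposed routes: a Haagerup-type inequality. Concretely, he shows that for $f\in\C[F_d]$ supported on the sphere of radius $n$ one has a bound of the form $\|f\|_{C^*_{\ell^p}}\le (n+1)\,\phi_{1/p}(n)^{-1}\|f\|_2$ (the $p=2$ case being Haagerup's original inequality), and then sums over spheres. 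The proof of this inequality uses that the spherical function $\phi_{1/p}$ is positive definite together with a pointwise domination $|\varphi|\le C\phi_{1/p}$ for matrix coefficients $\varphi\in\ell^p$, rather than any Plancherel-type disintegration; your first proposed route would be substantially harder to make rigorous for non-type-I groups like $F_2$.

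So your proposal is not wrong, but it stops precisely at the point where the real work begins, and you should be aware that the paper you are reading offers no help there --- you would need to consult \cite{Oka14} directly.
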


As observed in \cite{wiersma2016constructions}, since restrictions (and, respectively, inductions) of $\ell ^p$-representations to (respectively: from) subgroups are themselves $\ell ^p$-representations, it follows immediately from Theorem \ref{thm:Oka14} that if $\Gamma$ is any group containing a subgroup isomorphic to $F_2$, then the unitary representations $\sigma ^p_{\Gamma}$, $2\leq p < \infty$, are pairwise weakly inequivalent.

For each unitary representation $\pi$ of $\Gamma$ on a separable Hilbert space we consider the corresponding {\bf Gaussian action}, denoted $\bfa (\pi )$, which is a p.m.p.\ action of $\Gamma$ on a standard probability space (see \cite[Appendix E]{Kechris-global-aspects} and \cite[Appendix E]{MR3616077}). We let $\kappa ^{\bfa (\pi )}$ denote the Koopman representation corresponding to $\bfa (\pi )$, and we let $\kappa ^{\bfa (\pi )}_0$ denote the restriction  of $\kappa ^{\bfa (\pi )}$ to the orthogonal complement of the constant functions. We note the following lemma:

\begin{lem}\label{lem:lp}
The representations $\kappa ^{\bfa (\sigma ^p_{\Gamma}) }_0$ and $\sigma ^p_{\Gamma}$ are weakly equivalent.
\end{lem}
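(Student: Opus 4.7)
The goal is to establish weak equivalence of two unitary representations, which amounts to verifying weak containment in both directions, so I would split the argument accordingly.

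First I would handle the easy direction $\sigma^p_{\Ga} \prec \kappa^{\bfa(\sigma^p_{\Ga})}_0$. This is essentially a general feature of the Gaussian construction: the ``first Wiener chaos'' piece of $\kappa^{\bfa(\pi)}_0$ contains a copy of $\pi$ (or, depending on convention, of its realification which in any case weakly contains $\pi$). So for any unitary $\pi$ we have $\pi \prec \kappa^{\bfa(\pi)}_0$, and applying this with $\pi = \sigma^p_{\Ga}$ yields one direction. I would simply cite \cite[Appendix E]{Kechris-global-aspects} or \cite[Appendix E]{MR3616077} for this.

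The substantive direction is $\kappa^{\bfa(\sigma^p_{\Ga})}_0 \prec \sigma^p_{\Ga}$. Here I would invoke the standard decomposition
\[
\kappa^{\bfa(\pi)}_0 \;\cong\; \bigoplus_{n=1}^{\infty} \pi^{\odot n},
\]
the direct sum of symmetric tensor powers (after realification/complexification, which does not affect the $\ell^p$ estimates below). The key point is that matrix coefficients of $\pi^{\otimes n}$ on elementary tensors are products $\pi_{\xi_1,\eta_1}(\ga)\cdots \pi_{\xi_n,\eta_n}(\ga)$ of matrix coefficients of $\pi$. Consequently, if $\pi$ is an $\ell^p(\Ga)$-representation witnessed by a dense subspace $\cH_0$, then on the dense subspace spanned by elementary tensors from $\cH_0$, matrix coefficients of $\pi^{\otimes n}$ lie in $\ell^{p/n}(\Ga)$ by H\"older's inequality. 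Since $p\geq 2$ and $n\geq 1$ we have $p/n \leq p$ and hence $\ell^{p/n}(\Ga)\subseteq \ell^p(\Ga)$, so each $\pi^{\otimes n}$ (and therefore each subrepresentation $\pi^{\odot n}$) is itself an $\ell^p(\Ga)$-representation. A direct sum of $\ell^p(\Ga)$-representations is again an $\ell^p(\Ga)$-representation (take the algebraic direct sum of the witnessing dense subspaces), so $\kappa^{\bfa(\sigma^p_{\Ga})}_0$ is an $\ell^p(\Ga)$-representation. By the universal property defining $\sigma^p_{\Ga}$, it is then weakly contained in $\sigma^p_{\Ga}$.

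Combining the two containments gives weak equivalence. The main potential obstacle is bookkeeping between the real orthogonal representation that underlies the Gaussian construction and the original complex unitary representation; I would dispose of this by noting that realification replaces $\pi$ by a representation $\pi_\R$ whose matrix coefficients are real-linear combinations of the real and imaginary parts of the matrix coefficients of $\pi$ (and similarly for the subsequent complexification), so the membership in $\ell^p(\Ga)$ passes through unchanged. No harder estimate beyond H\"older is needed, and no group-specific information about $\Ga$ is used — the lemma is a purely functorial statement about the Gaussian construction applied to the universal $\ell^p$-representation.
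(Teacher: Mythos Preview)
Your proposal is correct and follows essentially the same route as the paper: both directions are handled by (i) the first Wiener chaos containing a copy of $\sigma^p_\Ga$, and (ii) the Fock-space decomposition showing $\kappa^{\bfa(\sigma^p_\Ga)}_0$ sits inside a direct sum of tensor powers which is itself an $\ell^p(\Ga)$-representation, hence weakly contained in $\sigma^p_\Ga$.

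Two minor points of comparison. First, the paper is explicit that the relevant decomposition is as a subrepresentation of $\bigoplus_{n\ge 1}(\sigma \oplus \bar\sigma)^{\otimes n}$, i.e.\ the conjugate representation appears; your formula $\bigoplus_n \pi^{\odot n}$ is not literally correct for complex $\pi$, though your realification/complexification remarks effectively patch this (and, as you note, $\bar\sigma$ is an $\ell^p$-representation since its matrix coefficients are conjugates of those of $\sigma$). Second, where you spell out the H\"older computation to show tensor powers of $\ell^p$-representations are $\ell^p$-representations, the paper simply cites \cite{BG13} for this fact. Neither difference is substantive.
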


\begin{proof}
Put $\sigma = \sigma ^p_{\Gamma}$. By \cite[Theorem E.19]{MR3616077}, $\kappa ^{\bfa (\sigma )}_0$ contains $\sigma$ and is isomorphic to a subrepresentation of $\bigoplus _{n\geq 1}(\sigma \oplus \overline{\sigma })^{\otimes n}$, where  $\overline{\sigma }$ denotes the conjugate representation of $\sigma$. By \cite{BG13}, the representation $\bigoplus _{n\geq 1}(\sigma \oplus \overline{\sigma })^{\otimes n}$ is an $\ell ^p(\Gamma )$-representation, so (since it contains $\sigma$) it is weakly equivalent to $\sigma$. Therefore, $\kappa ^{\bfa (\sigma )}_0$ is weakly equivalent to $\sigma$ as well.
\end{proof}

By \cite{dixmier1977c}, every $\ell ^2 (\Gamma )$-representation is a subrepresentation of a multiple of the left regular representation of $\Gamma$. For concreteness, we will therefore take $\sigma ^2_{\Gamma}$ to be the left regular representation of $\Gamma$. Also, for each $2\leq p<\infty$, since $\ell ^2(\Gamma )\leq \ell ^p(\Gamma )$, we will assume (without loss of generality) that $\sigma  ^2_{\Gamma}$ is a subrepresentation of $\sigma ^p_{\Gamma}$. Then $\bfa (\sigma ^2_{\Gamma})$ is a Bernoulli shift action of $\Gamma$, and for each $2\leq p<\infty$ the action $\bfa (\sigma ^p_{\Gamma})$ factors onto a Bernoulli shift and hence is free.

\begin{thm}\label{thm:uncountable}
Let $\Gamma$ be a group containing a subgroup isomorphic to $F_2$. Then the actions $\bfa (\sigma ^p_{\Gamma} )$, $2\leq p < \infty$, are pairwise stably weakly inequivalent, and each is free, mixing and strongly ergodic.
\end{thm}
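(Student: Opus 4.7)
The plan is to derive each of the four conclusions by translating a property of the p.m.p.\ action $\bfa(\sigma^p_\Gamma)$ into a corresponding property of the unitary representation $\sigma^p_\Gamma$, and then invoking Lemma \ref{lem:lp} and Okayasu's theorem. For essential freeness, since $\ell^2(\Gamma)\subseteq \ell^p(\Gamma)$, the regular representation $\sigma^2_\Gamma$ is a subrepresentation of $\sigma^p_\Gamma$, so $\bfa(\sigma^p_\Gamma)$ factors onto the Bernoulli shift $\bfa(\sigma^2_\Gamma)$ and is therefore essentially free (as already noted in the paragraph preceding the theorem). For mixing, I would invoke the standard equivalence that a Gaussian action $\bfa(\pi)$ is mixing if and only if $\pi$ is a $c_0$-representation; the matrix coefficients of $\sigma^p_\Gamma$ on the dense subspace witnessing the $\ell^p$-property lie in $\ell^p(\Gamma)\subseteq c_0(\Gamma)$, and a standard Cauchy--Schwarz argument extends the $c_0$ conclusion to arbitrary matrix coefficients.

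Strong ergodicity is the most delicate step. I would use the characterization that $\bfa(\pi)$ is strongly ergodic if and only if $\kappa^{\bfa(\pi)}_0$ has no almost invariant vectors, which, together with Lemma \ref{lem:lp}, reduces the task to showing that $\sigma^p_\Gamma$ itself has no almost invariant vectors. If it did, then restricting to $F_2\le \Gamma$ would produce almost invariant vectors for $\sigma^p_\Gamma|_{F_2}$; since the restriction to a subgroup of a function in $\ell^p(\Gamma)$ is automatically in $\ell^p$ on the subgroup, this restriction is an $\ell^p(F_2)$-representation, hence weakly contained in $\sigma^p_{F_2}$. The task thus reduces to ruling out almost invariant vectors in $\sigma^p_{F_2}$ for $p<\infty$, which follows from the fact that the trivial character of $F_2$ does not factor through the quotient $C^*_{\ell^p}(F_2)$ of $C^*(F_2)$; this input is available from the $C^*$-algebraic analysis in \cite{BG13} and \cite{wiersma2016constructions}.

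For pairwise stable weak inequivalence, suppose for contradiction that $\bfa(\sigma^p_\Gamma)\sim_s \bfa(\sigma^q_\Gamma)$ for some $2\le p<q<\infty$. Weak equivalence of p.m.p.\ actions implies weak equivalence of the associated Koopman representations on $L^2_0$, and passing from $\bfa$ to $\bfa\times\bfi$ only multiplies the Koopman representation by an infinite-multiplicity trivial factor, which does not affect weak equivalence class of unitary representations. Consequently $\kappa^{\bfa(\sigma^p_\Gamma)}_0$ is weakly equivalent to $\kappa^{\bfa(\sigma^q_\Gamma)}_0$, and Lemma \ref{lem:lp} then gives a weak equivalence $\sigma^p_\Gamma \simeq \sigma^q_\Gamma$. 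This contradicts the extension of Theorem \ref{thm:Oka14} to groups containing $F_2$ recorded in the paragraph following that theorem.

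The principal obstacle is the strong ergodicity step: the other three conclusions are routine translations between p.m.p.\ actions and their underlying unitary representations, whereas the non-existence of almost invariant vectors in $\sigma^p_{F_2}$ must be extracted from the detailed structure of the universal $\ell^p$-representation of the free group, which is not purely formal and requires the cited $C^*$-algebraic results.
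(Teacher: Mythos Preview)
Your overall strategy matches the paper's: freeness, mixing, and strong ergodicity are each read off from the corresponding property of $\sigma^p_\Gamma$ via standard Gaussian machinery and Lemma~\ref{lem:lp}, and pairwise inequivalence is reduced to Okayasu's theorem. For strong ergodicity your route through $F_2$ is correct but more circuitous than the paper's, which simply invokes \cite{BG13} for the general fact that $\sigma^p_\Gamma$ has no almost invariant vectors whenever $\Gamma$ is non-amenable.

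The passage from stable weak equivalence to weak equivalence of the $\kappa_0$-representations, however, contains a gap. The claim that passing from $\bfa$ to $\bfa\times\bfi$ ``does not affect weak equivalence class'' is false on $L^2_0$: one computes
\[
\kappa^{\bfa\times\bfi}_0 \;\cong\; \infty\cdot\kappa^{\bfa}_0 \,\oplus\, \infty\cdot 1_\Gamma,
\]
which is weakly equivalent to $\kappa^{\bfa}_0 \oplus 1_\Gamma$, not to $\kappa^{\bfa}_0$; and these differ precisely when $\bfa$ is strongly ergodic. Your argument therefore only yields $\sigma^p_\Gamma \oplus 1_\Gamma \sim \sigma^q_\Gamma \oplus 1_\Gamma$, and weak equivalence does not admit cancellation in general. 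The gap is repairable---since $1_\Gamma \not\prec \sigma^q_\Gamma$ (no almost invariant vectors), the containment $\sigma^q_\Gamma \prec \sigma^p_\Gamma \oplus 1_\Gamma$ forces $\sigma^q_\Gamma \prec \sigma^p_\Gamma$ at the level of Fell supports---but you have not supplied this step. The paper sidesteps the issue entirely by first observing that for \emph{ergodic} actions $\sim_s$ already implies $\sim$ (an application of Theorem~\ref{thm:weak-decomposition}), and only then passing to Koopman representations, so that the extra copy of $1_\Gamma$ never appears.
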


\begin{proof}
We already observed that each of the actions $\bfa (\sigma ^p_{\Gamma} )$ is free. Since $\Gamma$ is non-amenable, the representation $\sigma ^p_{\Gamma}$ does not have almost invariant vectors \cite{BG13}. Therefore, the representation $\kappa ^{\bfa (\sigma ^p _{\Gamma})}_0$, being weakly equivalent to $\sigma ^p_{\Gamma}$, does not have almost invariant vectors.  This implies that $\bfa (\sigma ^p _{\Gamma})$ is strongly ergodic. Since $\ell ^p(\Gamma ) \subseteq c_0 (\Gamma )$, each of the representations $\sigma ^p_{\Gamma}$ is mixing, hence the action $\bfa (\sigma ^p_{\Gamma} )$ is mixing.

If $\bfa (\sigma ^p_{\Gamma } ) \sim _s \bfa (\sigma ^q_{\Gamma} )$, then $\bfa (\sigma ^p_{\Gamma } ) \sim \bfa (\sigma ^q_{\Gamma} )$ since both actions are ergodic, and hence $\kappa ^{\bfa (\sigma ^p_{\Gamma})}_0 \sim \kappa ^{\bfa (\sigma ^q_{\Gamma})}_0$. Lemma \ref{lem:lp} then implies that $\sigma ^p _{\Gamma}\sim \kappa ^{\bfa (\sigma ^p_{\Gamma } ) }_0 \sim \kappa ^{\bfa (\sigma ^q_{\Gamma } ) }_0\sim \sigma ^q_{\Gamma}$, and so we must have $p=q$ by the remark following Theorem \ref{thm:Oka14}.
\end{proof}

\bibliography{biblio}
\bibliographystyle{alpha}

\end{document}